\numberwithin{equation}{section}
\newcommand{\g}{\mathfrak{g}}
\newcommand{\mr}{\mathfrak{r}}
\newcommand{\p}{\varphi}
\newcommand{\pe}{\mathfrak{p}}
\newcommand{\q}{\mathbf{Q}}
\newcommand{\lw}{L \cap W}
\newcommand{\ord}{\mathrm{ord}_{\mathfrak{p}}}
\newcommand{\REF}[1]{\eqref{#1}}
\newcommand{\THM}[1]{Theorem \ref{#1}}
\newcommand{\COR}[1]{Corollary \ref{#1}}
\newcommand{\LEM}[1]{Lemma \ref{#1}}
\newcommand{\PROP}[1]{Proposition \ref{#1}}
\title{Invariants and discriminant ideals of 
	orthogonal complements in a quadratic space} 
\author{Manabu Murata} 
\date{}
\theoremstyle{definition}
\newtheorem{sect}{}[section]
\theoremstyle{plain}
\newtheorem{thm}[sect]{Theorem}
\newtheorem{lem}[sect]{Lemma}
\newtheorem{prop}[sect]{Proposition}
\newtheorem{cor}[sect]{Corollary}
\begin{document}
\maketitle
\begin{abstract}
This paper studies two topics concerning the orthogonal complement $W$ of $Fh$ 
with respect to a quadratic form $\p$ on a vector space $V$ over a number field $F$ 
for a given element $h$ of $V$. 
One is to determine the invariants of the isomorphism class of $W$ in the sense of Shimura \cite{cq}. 
The other is to study a fractional ideal $[M/L \cap W]$ in $F$ that is 
closely connected with the ideal $\p(h,\, L)$, 
where $L$ is a maximal lattice in $V$ and $M$ is a maximal lattice in $W$. 
We also discuss the class number of the genus of maximal lattices in $W$ 
when $V$ is even-dimensional and $L \cap W$ is maximal. 
\end{abstract}

\section*{Introduction}

Let $V$ be a vector space of dimension $n\,(>1)$ over an algebraic number field $F$ 
and $\p$ a nondegenerate symmetric $F$-bilinear form on $V$. 
For an element $h$ of $V$ such that $\p(h,\, h) \ne 0$, 
we consider a subspace 
\begin{gather*}
W = (Fh)^{\perp} = \{x \in V \mid \p(x,\, h) = 0\}
\end{gather*}
of dimension $n - 1$ and the restriction $\psi$ of $\p$ to $W$. 

One of purposes in this paper is to study the \textit{invariants} of 
the quadratic space $(W,\, \psi)$ in the sense of Shimura \cite{cq}, 
which are given by a set of data that consists of 
the \textit{dimension} of $W$ over $F$, 
the \textit{discriminant field} of $\psi$, 
the \textit{characteristic quaternion algebra} of $\psi$, 
and the \textit{index} of $\psi$ at each real archimedean prime of $F$; 
see Section 1.1 in the text for details. 
It is known by \cite[Theorem 4.2]{cq} that 
these invariants determine the isomorphism class of $(W,\, \psi)$, 
and vice versa. 
We shall give such invariants in terms of 
the invariants of $(V,\, \p)$ and the value $\p[h] = \p(h,\, h)$ in \THM{t1}. 
\\

To explain another purpose, 
take $h \in V$ and put $W = (Fh)^{\perp}$ as above. 
For two $\g$-lattices $M$ and $N$ in $W$ 
we denote by $[M/N]$ the $\g$-ideal of $F$ generated over $\g$ by 
$\det(\alpha)$ of all $F$-linear automorphisms $\alpha$ of $W$ such that $M\alpha \subset N$. 
Here $\g$ is the ring of all algebraic integers in $F$. 
Let $L$ be a $\g$-maximal lattice in $V$ with respect to $\p$. 
Then we have a $\g$-lattice $L \cap W$ in $W$ such that $\psi[x]\in\g$ for every $x\in\lw$, 
which is our main object in this paper. 
Here the term \textit{maximal} is given in Section 2.1. 
Suppose $M$ is $\g$-maximal with respect to $\psi$. 
The ideal $[\widetilde{M}/M]$ is called the \textit{discriminant ideal} of $(W,\, \psi)$, 
where we put $\widetilde{M} = \{x \in W \mid 2\psi(x, M) \subset \g\}$. 
Such an ideal is independent of the choice of $M$, 
and it was introduced in \cite[{\S}6.1]{cq}. 

It can be seen that $[(L \cap W)\widetilde{\,}/L \cap W] = [M/L \cap W]^{2}[\widetilde{M}/M]$ 
and in particular, 
$\lw$ is maximal if $[M/L \cap W] = \g$. 
Now, 
put $q = \p[h] \ne 0$ and observe that $\p(h,\, L)$ defines a $\g$-ideal of $F$. 
Then our purpose is to show in \THM{t3} that 
\textit{there exists a $\g$-ideal $\mathfrak{b}(q)$ of $F$ such that} 
\begin{gather}
\p(h,\, L)[M/L \cap W] = 2^{-1}\mathfrak{b}(q), \label{mr}
\end{gather}
\textit{and it is determined by the relation} 
\begin{gather*}
\mathfrak{b}(q)^{2}[\widetilde{M}/M] = 2q[\widetilde{L}/L]. 
\end{gather*}
The ideal $\mathfrak{b}(q)$ is independent of the choice of $h$ and $L$. 
From \REF{mr} we know that $L \cap W$ is maximal in $W$ if and only if $\p(h,\, L) = 2^{-1}\mathfrak{b}(q)$, 
that is, if $q(\p(h,\, L))^{-2} = 2[\widetilde{M}/M][\widetilde{L}/L]^{-1}$. 
This gives the criterion on the maximality of $\lw$ due to Yoshinaga \cite[Theorem 6.3]{Y} 
in terms of the discriminant ideals of $\p$ and $\psi$. 

As an application, 
we shall discuss how the class number of the genus 
of all maximal lattices in a quadratic space can be determined 
through the principle in \cite[Theorem 11.6]{04} due to Shimura. 
In \PROP{cnf} we treat \textit{even} dimensional quadratic spaces whose discriminant fields are the base fields. 
As for the case where the discriminant field is not the base field, 
we take up the quadratic form over $\q$ defined by the sum of six squares in the last section. 
\\

\textit{Notation}. 
We denote by $\mathbf{Z}$, $\mathbf{Q}$, and $\mathbf{R}$ the ring of rational integers, 
the field of rational numbers, and the field of real numbers, respectively. 

If $R$ is an associative ring with identity element and if $M$ is an $R$-module, 
then we write $R^{\times}$ for the group of all invertible elements of $R$ and 
$M_{n}^{m}$ the $R$-module of $m \times n$-matrices with entries in $M$. 
We set $R^{\times 2} = \{a^{2} \mid a \in R^{\times}\}$. 
For a finite set $X$, 
we denote by $\# X$ the number of elements in $X$. 
If a set $X$ is a disjoint union of its subsets $Y_{1},\, \cdots ,\, Y_{m}$, 
then we write $X = \bigsqcup_{i = 1}^{m} Y_{i}$. 
We also write $\mathrm{diag}[a_{1},\, \cdots ,\, a_{s}]$ 
for the matrix with square matrices $a_{1},\, \cdots ,\, a_{s}$ in the diagonal blocks 
and $0$ in all other blocks. 
We set $[a] = \mathrm{Max}\{n \in \mathbf{Z} \mid n \le a\}$ 
and $\delta_{ij} = 1$ or $0$ according as $i = j$ or $i \ne j$. 

Let $V$ be a vector space over a field $F$ of characteristic $0$, 
and $GL(V)$ the group of all $F$-linear automorphisms of $V$. 
We let $GL(V)$ act on $V$ on the \textit{right}. 

Let $F$ be an algebraic number field (of finite degree) and $\g$ the ring of all algebraic integers in $F$. 
For a fractional ideal in $F$ we often call it a $\g$-ideal. 
Let $\mathbf{a}$, $\mathbf{h}$, and $\mathbf{r}$ 
be the sets of archimedean primes, nonarchimedean primes, and real archimedean primes of $F$, 
respectively. 
We denote by $F_{v}$ the completion of $F$ at $v \in \mathbf{a} \cup \mathbf{h}$ 
and by $F_{\mathbf{A}}$ the adele ring of $F$. 
We often identify $v$ with the prime ideal of $F$ corresponding to $v \in \mathbf{h}$, 
and write $x_{v}$ for the image of $x$ under the embedding of $F$ into $\mathbf{R}$ at $v \in \mathbf{r}$. 
For $v \in \mathbf{h}$, 
we denote by $\g_{v}$, $\mathfrak{p}_{v}$, and $\pi_{v}$ 
the maximal order of $F_{v}$, the prime ideal in $F_{v}$, and a prime element of $F_{v}$, 
respectively. 
We write $\mathrm{ord}_{\mathfrak{p}}(a) = \mathrm{ord}_{v}(a) = m$ 
if $a\g = \mathfrak{p}^{m}$ for $0 \ne a \in F$, 
where $\pe$ is the prime ideal of $F$ corresponding to $v$. 
If $K$ is a quadratic extension of $F$, 
we denote by $D_{K/F}$ the relative discriminant of $K$ over $F$, 
and put $K_{v} = K \otimes_{F} F_{v}$ for $v \in \mathbf{h}$. 
For $b \in F_{v}^{\times}$ we set 
\begin{gather*}
\xi_{v}(b) = 
	\begin{cases}
	1 & \text{if $b \in F_{v}^{\times 2}$}, \\
	-1 & \text{if $F_{v}(\sqrt{b})$ is an unramified quadratic extension of $F_{v}$}, \\
	0 & \text{if $F_{v}(\sqrt{b})$ is a ramified quadratic extension of $F_{v}$}. 
	\end{cases}
\end{gather*}
By a \textit{$\g$-lattice} $L$ in a vector space $V$ over a number field or nonarchimedean local field $F$, 
we mean a finitely generated $\g$-submodule in $V$ containing a basis of $V$. 
For two subspaces $X$ and $Y$ of $V$, 
we denote by $X \oplus Y$ the direct sum of $X$ and $Y$ if $\p(x,\, y) = 0$ for every $x \in X$ and $y \in Y$; 
we also denote by $\p|_{X}$ the restriction of $\p$ to $X$. 
When $X$ is an object defined over a number field $F$, 
we often denote by $X_{v}$ the localization at a prime $v$ if it is meaningful. 

\section{Invariants of an orthogonal complement}

\subsection{Quadratic spaces and invariants}

Let $F$ be an algebraic number field or nonarchimedean local field. 
Let $(V,\, \p)$ be a quadratic space over $F$, 
that is, 
$V$ is a vector space of dimension $n$ over $F$ 
and $\p$ is a symmetric $F$-bilinear form on $V$. 
In this paper, 
we consider only a nondegenerate form $\p$. 
We put $\p[x] = \p(x,\, x)$ for $x \in V$. 
We define the orthogonal group and the special orthogonal group of $\p$ by 
\begin{gather*}
O^{\p}(V) = O^{\p} = \{ \gamma \in GL(V) \mid \p(x\gamma,\, y\gamma) = \p(x,\, y) 
	\text{ for every } x,\, y\in V\}, \\ 
SO^{\p}(V) = SO^{\p} = \{ \gamma \in O^{\p}(V) \mid \det(\gamma) = 1 \}. 
\end{gather*}
We denote by $A(\p)$ (or by $A(V)$) the Clifford algebra of $\p$ and 
by $A^{+}(\p)$ (or by $A^{+}(V)$) the even Clifford algebra of $\p$. 
\\ 

For a number field $F$ and $v \in \mathbf{a} \cup \mathbf{h}$, 
we put $V_{v} = V\otimes_{F}F_{v}$ and denote by $\p_{v}$ the $F_{v}$-bilinear extension of $\p$ to $V_{v}$; 
we set $(V,\, \p)_{v} = (V_{v},\, \p_{v})$. 
For $v \in \mathbf{h}$, 
$(V,\, \p)_{v}$ has a \textit{Witt decomposition} as follows (cf. \cite[Lemma 1.3]{04}): 
There exist $2r_{v}$ elements $e_{i}$ and $f_{i}$ $(i = 1,\, \cdots,\, r_{v})$ such that 
\begin{gather}
V_{v} = Z_{v} \oplus \sum_{i=1}^{r_{v}} (F_{v}e_{i} + F_{v}f_{i}), \label{w1} \\ 
Z_{v} = \{ z \in V_{v} \mid \p_{v}(z,\, e_{i}) = \p_{v}(z,\, f_{i}) = 0 \text{ for every $i$} \}, \nonumber \\ 
\p_{v}(e_{i},\, e_{j}) = \p_{v}(f_{i},\, f_{j}) = 0, \quad 
2\p_{v}(e_{i},\, f_{j}) = \delta_{ij}. \nonumber 
\end{gather}
Here the restriction of $\p_{v}$ to $Z_{v}$ is anisotropic. 
It is known that the dimension $t_{v}$ of $Z_{v}$ is determined by $\p$ and $v$, 
and that $0 \le t_{v} \le 4$ for $v \in \mathbf{h}$ (cf.\ \cite[Theorem 7.6\ (ii)]{04}). 
We call $Z_{v}$ a \textit{core subspace} of $(V,\, \p)_{v}$ 
and $t_{v}$ the \textit{core dimension} of $(V,\, \p)$ at $v$. 
For convenience, 
we also call a subspace $U_{v}$ of $V_{v}$ anisotropic if $\p_{v}$ is so on $U_{v}$. 
\\

By the \textit{invariants} of $(V,\, \p)$, 
we understand a set of data 
\begin{gather*}
\left\{n,\ F(\sqrt{\delta}),\ Q(\p),\ \{s_{v}(\p)\}_{v \in \mathbf{r}}\right\}, 
\end{gather*}
where $n$ is the dimension of $V$ over $F$, 
$F(\sqrt{\delta})$ is the \textit{discriminant field} of $\p$ with $\delta = (-1)^{n(n-1)/2}\det(\p)$, 
$Q(\p)$ is the \textit{characteristic quaternion algebra} of $\p$, 
and $s_{v}(\p)$ is the \textit{index} of $\p$ at $v \in \mathbf{r}$. 
For these definitions, 
the reader is referred to \cite[{\S}1.1,\ {\S}3.1,\ and {\S}4.1]{cq} (see also below). 
By virtue of \cite[Theorem 4.2]{cq} the isomorphism class of $(V,\, \p)$ is determined by 
$\{n,\, F(\sqrt{\delta}),\, Q(\p),\, \{s_{v}(\p)\}_{v \in \mathbf{r}}\}$. 
By the definition of $Q(\p)$, 
\begin{gather*}
A(\p) \cong M_{s}(Q(\p)) \quad \text{if $n$ is even and $n>0$}, \\
A^{+}(\p) \cong M_{s}(Q(\p)) \quad \text{if $n$ is odd and $n>1$} 
\end{gather*}
as a central simple algebra over $F$ with $0 < s \in \mathbf{Z}$. 
We set $Q(\p) = M_{2}(F)$ if $n = 1$. 
For $v \in \mathbf{r}$, 
$\p$ can be represented by a matrix of the form 
$\mathrm{diag}[1_{i_{v}},\, -1_{j_{v}}]$ with $i_{v} + j_{v} = n$, 
where $1_{m}$ is the identity element of $\mathbf{R}_{m}^{m}$; 
then we define the index $s_{v}(\p)$ of $\p$ by $s_{v}(\p) = i_{v} - j_{v}$. 

The characteristic algebra $Q(\p_{v})$ is also defined for $\p_{v}$ at $v \in \mathbf{a} \cup \mathbf{h}$ 
and it coincides with $Q(\p)\otimes_{F}F_{v}$ (cf.\ \cite[{\S}3.1]{cq}). 
By \cite[Lemma 3.3]{cq} the isomorphism class of $(V,\, \p)_{v}$ is determined by 
$\{n,\, F_{v}(\sqrt{\delta}),\, Q(\p_{v})\}$ if $v \in \mathbf{h}$. 
As for $v \in \mathbf{a}$, 
it is determined by $\{n,\, s_{v}(\p)\}$ if $v \in \mathbf{r}$, 
and by the dimension $n$ if $v \not\in \mathbf{r}$. 
If $v \in \mathbf{r}$, 
then $Q(\p_{v})$ is given by 
\begin{gather}
Q(\p_{v}) = 
	\begin{cases}
	M_{2}(\mathbf{R}) & \text{if } s_{v}(\p) \equiv \pm 1,\ 0,\ 2 \pmod{8}, \\
	\mathbf{H} & \text{if } s_{v}(\p) \equiv \pm 3,\ 4,\ 6 \pmod{8}, 
	\end{cases} \label{chre}
\end{gather}
where $\mathbf{H}$ is the division ring of Hamilton quaternions; 
see \cite[(4.2a) and (4.2b)]{cq}, for example. 
If $v \not\in \mathbf{r}$, 
then $Q(\p_{v}) = M_{2}(\mathbf{C})$, 
where $\mathbf{C}$ is the field of complex numbers. 
It can be seen that 
$Q(\p_{v})$ coincides with the characteristic algebra of 
a core subspace of $(V,\, \p)_{v}$ for $v \in \mathbf{h}$. 
\\

As was shown in \cite[{\S}3.2]{cq} and also in the proof of \cite[Lemma 3.3]{cq}, 
the core dimension $t_{v}$ of $(V,\, \p)$ at $v \in \mathbf{h}$ is determined as follows: 
\begin{enumerate}
\item If $n$ is even, 
then 
\begin{gather}
t_{v} = 
\begin{cases}
0 & \text{if $F_{v}(\sqrt{\delta}) = F_{v}$ and $Q(\p_{v}) = M_{2}(F_{v})$}, \\
4 & \text{if $F_{v}(\sqrt{\delta}) = F_{v}$ and $Q(\p_{v})$ is a division algebra}, \\
2 & \text{if $F_{v}(\sqrt{\delta}) \ne F_{v}$},
\end{cases} \label{cd1}
\end{gather}
where $F(\sqrt{\delta})$ is the discriminant field of $\p$. 
\item If $n$ is odd, 
then 
\begin{gather}
t_{v} = 
\begin{cases}
1 & \text{if $Q(\p_{v}) = M_{2}(F_{v})$}, \\
3 & \text{if $Q(\p_{v})$ is a division algebra}.
\end{cases} \label{cd2}
\end{gather}
\end{enumerate}

Let $F$ be a number field or nonarchimedean local field. 
For $h \in V$ such that $\p[h] \ne 0$, 
we put 
\begin{gather}
W = (Fh)^{\perp} = \{x \in V \mid \p(x,\, h) = 0\}. \label{orc}
\end{gather}
Then $V = W \oplus Fh$. 
Since $\p[x] \in F^{\times 2}\p[h]$ if $x \in Fh$, 
we often denote one-dimensional $(Fh,\, \p|_{Fh})$ by $\langle F,\, q \rangle$ with $q = \p[h]$, 
and then $(V,\, \p)$ by $(V,\, \p) = (W,\, \psi) \oplus \langle F,\, q \rangle$ 
with the restriction $\psi$ of $\p$ to $W$. 
It is noted that the invariants of $(W,\, \psi)$ are independent of the choice of $h$. 
To see this, 
let $h_{0}$ be another element of $V$ so that $\p[h_{0}] = q$. 
Then there exists $\gamma \in SO^{\p}(V)$ such that $h\gamma = h_{0}$ by virtue of \cite[Lemma 1.5(ii)]{04}. 
Thus $(W,\, \psi)$ is isomorphic to $(W_{0},\, \psi_{0})$ under $\gamma$ 
and its invariants are the same as those of $(W_{0},\, \psi_{0})$, 
where $W_{0} = (Fh_{0})^{\perp}$ and $\psi_{0}$ is the restriction of $\p$ to $W_{0}$. 
This shows our claim. 
Notice that the isomorphism class of $(W,\, \psi)$ is determined by $q \in F^{\times}/F^{\times 2}$. 
\\

Here we introduce some symbols for lower-dimensional quadratic spaces, 
which will be used throughout the paper. 

For a quadratic extension field $K$ of $F$, 
we put $2\kappa(x,\, y) = xy^{\rho} + x^{\rho}y$ for $x,\, y \in K$ 
with a nontrivial automorphism $\rho$ of $K$ over $F$. 
Put also $\kappa[x] = \kappa(x,\, x)$, 
which is the norm $N_{K/F}(x)$ of $x$. 

For a quaternion algebra $B$ over $F$, 
we put $2\beta(x,\, y) = xy^{\iota} + yx^{\iota}$ for $x,\, y \in B$ 
with the main involution $\iota$ of $B$. 
Then the reduced norm $N_{B/F}(x)$ of $x$ is given by $\beta[x] = \beta(x,\, x)$. 
We denote by $D_{B}$ the discriminant of $B$ when $F$ is a number field. 

Every quaternion algebra over $F$ can be given by 
\begin{gather}
K + K\omega, \quad \omega^{2} = c, \quad x\omega = \omega x^{\iota} \quad \text{for every $x \in K$} 
\label{om}
\end{gather}
with a quadratic extension field $K$ of $F$ and $c \in F^{\times}$ (cf.\ \cite[{\S}1.10]{04}). 
We denote it by $\{K,\, c\}$. 
Then $\{K,\, c\} = M_{2}(F)$ if and only if $c \in \kappa[K^{\times}]$. 
In particular when $F$ is a nonarchimedean local field, 
a division quaternion algebra over $F$ is isomorphic to $\{K,\, c\}$ 
for an arbitrarily fixed element $c \in F^{\times}$ such that $c \not\in \kappa[K^{\times}]$. 
This is because there is a unique division quaternion algebra over $F$ up to isomorphisms; 
see \cite[Theorem 5.14]{04}, for example. 
We set 
\begin{gather}
B^{\circ} = \{x \in B \mid x^{\iota} = -x\},\quad 
	\beta^{\circ} = \beta |_{B^{\circ}}. \label{bc}
\end{gather}

\subsection{The invariants of $(W,\, \psi)$}

\begin{thm} \label{t1}
Let $(V,\, \p)$ be a quadratic space over an algebraic number field $F$ 
with invariants $\{n,\, F(\sqrt{\delta}),\, Q(\p),\, \{s_{v}(\p)\}_{v \in \mathbf{r}}\}$ 
and $n > 1$. 
Put $B = Q(\p)$. 
Given $q \in \p[V]\cap F^{\times}$, 
set $(V,\, \p) = (W,\, \psi) \oplus \langle F,\, q \rangle$. 
Then the invariants of $(W,\, \psi)$ are given by 
\begin{gather*}
\left\{n-1,\ F(\sqrt{(-1)^{n-1}\delta q}),\ Q(\psi),\ \{s_{v}(\psi)\}_{v \in \mathbf{r}}\right\}. 
\end{gather*} 
The characteristic quaternion algebra $Q(\psi_{v})$ at $v \in \mathbf{a} \cup \mathbf{h}$
and the index $s_{v}(\psi)$ at $v \in \mathbf{r}$ are determined as follows: 
\begin{enumerate}
\item Suppose $n$ is even. 
Put $K = F(\sqrt{\delta})$. 
\begin{enumerate}
\item If $n = 2$, 
then $Q(\psi) = M_{2}(F)$. 
\item If $n > 2$, 
then $Q(\psi_{v}) = M_{2}(F_{v})$ holds exactly in the following cases: 
\begin{eqnarray*}
	& & \xi_{v}(\delta) = 1 \text{ and } v \nmid D_{B}, \\
	& & \xi_{v}(\delta) \ne 1, v \nmid D_{B}, \text{ and } q \in \kappa[K_{v}^{\times}], \\
	& & \xi_{v}(\delta) \ne 1, v \mid D_{B}, \text{ and } q \not\in \kappa[K_{v}^{\times}], \\
	& & v \in \mathbf{r}, q_{v} > 0, \text{ and } s_{v}(\p) \equiv 0,\ 2 \pmod{8}, \\
	& & v \in \mathbf{r}, q_{v} < 0, \text{ and } s_{v}(\p) \equiv 0,\ 6 \pmod{8}, \\
	& & v \in \mathbf{a} \text{ such that } v \not\in \mathbf{r}. 
\end{eqnarray*}
\end{enumerate}
\item Suppose $n$ is odd. 
Put $K = F(\sqrt{\delta q})$. 
\begin{enumerate}
\item If $n = 3$, 
then $Q(\psi_{v}) = M_{2}(F_{v})$ holds exactly in the following cases:  
\begin{eqnarray*}
	& & \xi_{v}(\delta q) = 1 \text{ and } v \nmid D_{B}, \\
	& & \xi_{v}(\delta q) \ne 1, v \nmid D_{B}, \text{ and } \delta \in \kappa[K_{v}^{\times}], \\
	& & v \mid D_{B} \text{ and } \delta \not\in \kappa[K_{v}^{\times}], \\
	& & v \in \mathbf{r}, q_{v} > 0, \text{ and } s_{v}(\p) \equiv 1,\ 3 \pmod{8}, \\
	& & v \in \mathbf{r}, q_{v} < 0, \text{ and } s_{v}(\p) \equiv \pm 1 \pmod{8}, \\
	& & v \in \mathbf{a} \text{ such that } v \not\in \mathbf{r}. 
\end{eqnarray*}
\item If $n > 3$, 
then $Q(\psi_{v}) = M_{2}(F_{v})$ holds exactly in the following cases:  
\begin{eqnarray*}
	& & \xi_{v}(\delta q) = 1 \text{ and } v \nmid D_{B}, \\
	& & \xi_{v}(\delta q) \ne 1, v \nmid D_{B}, \text{ and } \delta \in \kappa[K_{v}^{\times}], \\
	& & \xi_{v}(\delta q) \ne 1, v \mid D_{B}, \text{ and } \delta \not\in \kappa[K_{v}^{\times}], \\
	& & v \in \mathbf{r}, q_{v} > 0, \text{ and } s_{v}(\p) \equiv 1,\ 3 \pmod{8}, \\
	& & v \in \mathbf{r}, q_{v} < 0, \text{ and } s_{v}(\p) \equiv \pm 1 \pmod{8}, \\
	& & v \in \mathbf{a} \text{ such that } v \not\in \mathbf{r}. 
\end{eqnarray*}
\end{enumerate}
\item For $v \in \mathbf{r}$, 
\begin{gather}
s_{v}(\psi) = 
	\begin{cases}
	s_{v}(\p) - 1 & \text{if $q_{v} > 0$}, \\ 
	s_{v}(\p) + 1 & \text{if $q_{v} < 0$}. 
	\end{cases} \label{ind}
\end{gather} 
\end{enumerate}
\end{thm}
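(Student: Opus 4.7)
The claims on dimension, discriminant field, and real-archimedean index are all elementary. Since $V = W \oplus Fh$ with $\p[h] = q$, one has $\det(\p) = q\det(\psi)$, so modulo $F^{\times 2}$
\[
(-1)^{(n-1)(n-2)/2}\det(\psi)
\equiv (-1)^{(n-1)(n-2)/2 - n(n-1)/2}q\delta
\equiv (-1)^{n-1}\delta q,
\]
which identifies the discriminant field of $\psi$.
For $v \in \mathbf{r}$, Sylvester's law applied to $V_v = W_v \oplus F_v h$ gives \eqref{ind}: a positive $q_v$ sits in the $+1$ part and drops $s_v$ by one, and a negative $q_v$ does the opposite.
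At $v \in \mathbf{a}\setminus\mathbf{r}$ every quaternion algebra over $F_v\cong \mathbf{C}$ splits; for $v \in \mathbf{r}$ one inserts the $s_v(\psi)$ just found into \eqref{chre} and reads off precisely the archimedean lines of both tables.

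The substantive content is the determination of $Q(\psi_v)$ at $v \in \mathbf{h}$. My plan is to compare core subspaces. Decomposing $W_v = Z^\psi_v \oplus H^\psi_v$ with $H^\psi_v$ hyperbolic and $Z^\psi_v$ anisotropic yields
\[
V_v = (Z^\psi_v \oplus \langle F_v,\, q\rangle) \oplus H^\psi_v,
\]
so the core of $V_v$ is the anisotropic kernel of $Z^\psi_v \oplus \langle F_v,\, q\rangle$. Consequently the core dimensions satisfy $t_v(\p) = t_v(\psi) + 1$ when $Z^\psi_v \oplus \langle F_v,\, q\rangle$ is anisotropic and $t_v(\p) = t_v(\psi) - 1$ otherwise; the second alternative holds exactly when $-q$ is represented by $Z^\psi_v$. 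Via the presentation \eqref{om} of the quaternion algebra carrying the anisotropic kernel, this representability criterion translates into the norm conditions $q \in \kappa[K_v^\times]$ (when $n$ is even, $K = F(\sqrt{\delta})$) and $\delta \in \kappa[K_v^\times]$ (when $n$ is odd, $K = F(\sqrt{\delta q})$) appearing in the theorem. Feeding the resulting pair $(t_v(\psi),\, F_v(\sqrt{(-1)^{n-1}\delta q}))$ back into the dictionaries \eqref{cd1}--\eqref{cd2} then yields $Q(\psi_v)$ in each branch.

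The main obstacle is bookkeeping. One has to cross-tabulate the parity of $n$ against three binary alternatives -- whether the relevant discriminant is locally a square, whether $v \mid D_B$, and whether $q$ (or $\delta$) is a local norm from $K_v$ -- and keep track of how each choice shifts the core dimension up or down. The low-dimensional subcases $n = 2$ and $n = 3$ require separate attention because then $\dim W \in \{1,2\}$: case (1)(i) reduces to the convention $Q(\psi) = M_2(F)$ for $\dim W = 1$, while in (2)(i) a two-dimensional $W_v$ with split discriminant is automatically hyperbolic, which is what causes the qualifier ``$\xi_v(\delta q)\ne 1$'' to drop out of the third line there. Once the dichotomies are organized, every entry of both tables reduces to a mechanical verification.
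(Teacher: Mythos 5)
Your treatment of the dimension, the discriminant field, the index \eqref{ind}, and the archimedean primes agrees with the paper's, and your core-dimension dichotomy (the core of $W_v$ has dimension $t_v(\p)-1$ or $t_v(\p)+1$ according as $q$ is or is not represented by the core of $V_v$) is exactly the paper's starting point for the nonarchimedean places, up to the direction in which you phrase it. For $n$ even this already suffices: $\dim W$ is odd, so by \eqref{cd2} the core dimension $s\in\{1,3\}$ of $W_v$ alone decides whether $Q(\psi_v)$ splits, and the only remaining work is to convert ``$q$ represented by $(K_v,c\kappa)$'' into the conditions on $\xi_v(\delta)$, $v\mid D_B$, and $q\in\kappa[K_v^\times]$ via $B=\{K,c\}$ and $[F_v^\times:N_{K_v/F_v}(K_v^\times)]=2$, which you indicate.

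The gap is in the odd-$n$ case. There $\dim W$ is even and the core dimension of $W_v$ is $0$, $2$, or $4$; your final step is to feed the pair (core dimension, discriminant field) back into \eqref{cd1}, but \eqref{cd1} is not invertible in the middle branch: $s=2$ is equivalent to $F_v(\sqrt{\delta q})\ne F_v$ and is compatible with \emph{both} $Q(\psi_v)=M_2(F_v)$ and $Q(\psi_v)$ a division algebra. This is precisely the branch producing the second and third lines of tables (2)(i) and (2)(ii), so your dictionary lookup cannot produce the stated answer there. Moreover, the condition $\delta\in\kappa[K_v^\times]$ in those lines does \emph{not} arise from your representability criterion (which only controls whether $s$ goes up or down); it arises from identifying \emph{which} quaternion algebra $\{K,c\}$ the two-dimensional anisotropic core of $W_v$ generates. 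That requires an explicit orthogonal basis of the core: the paper shows the core is $Fg\oplus F(f_1-qe_1)$ with $\p[g]=\delta$ when $t_v=1$, giving $Q(\psi_v)=\{K,\delta\}$, and is $F\omega\oplus Fk\omega$ inside $B=\{K,c\}$ when $t_v=3$, giving $Q(\psi_v)=\{K,\delta c\}$ as in \eqref{q}. Without this computation (or an equivalent Hasse--Witt invariant calculation) the norm conditions on $\delta$ cannot be derived, so the proposal as written does not establish the odd-$n$ tables.
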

In the proof of \THM{t1} we determine only two invariants $s_{v}(\psi)$ and $Q(\psi)$, 
since the other invariants can easily be seen. 
The determination of $Q(\psi)$ will be done in {\S}{\S}1.3 and 1.4. 
\\ 

We start with $s_{v}(\psi)$ for $v \in \mathbf{r}$. 
By definition we put $s_{v}(\p) = i_{v} - j_{v}$ if $\p_{v} = 1_{i_{v}} \oplus (-1_{j_{v}})$ 
with $0 \le i_{v},\, j_{v} \in \mathbf{Z}$. 
Since $\p_{v} = \psi_{v} \oplus q_{v}$ with $q_{v} \in \mathbf{R}^{\times}$, 
we have \REF{ind}. 
From this we can determine $Q(\psi)_{v}$ at $v \in \mathbf{r}$ by \REF{chre}, 
which depends only on $s_{v}(\psi)$. 
As for the archimedean prime $v \not\in \mathbf{r}$, 
we have $Q(\psi_{v}) = M_{2}(\mathbf{C})$. 
\\

Before observing $Q(\psi)$, 
we reduce the problem to that on a core subspace of $W_{v}$ at $v \in \mathbf{h}$ 
by retaking $h$ under $SO^{\p_{v}}$. 
We first recall that 
$Q(\psi)$ is determined by $Q(\psi_{v})$ for every $v \in \mathbf{a} \cup \mathbf{h}$. 
Since the archimedean case is given above, 
we consider only the nonarchimedean case. 
Hereafter until the end of Section 1, 
we fix a nonarchimedean prime $v$ and drop the subscript $v$ of local symbols below; 
let $F$ be a nonarchimedean local field, 
$\g$ its maximal order, 
and $\mathfrak{p}$ the prime ideal in $F$. 

Let $(V,\, \p)$ be a quadratic space with invariants $\{n,\, F(\sqrt{\delta}),\, B\}$, 
and put $W = (Fh)^{\perp}$ with $h \in V$ so that $\p[h] = q \in F^{\times}$. 
Notice that if $k$ is an element of $V$ such that $\p[k] = q$, 
then $(W,\, \psi)$ is isomorphic to $((Fk)^{\perp},\, \p\mid_{(Fk)^{\perp}})$ under some $\gamma \in SO^{\p}$ 
and both characteristic algebras are the same. 
Hence, 
to determine $Q(\psi)$, 
we may identify $h$ with a suitable element $k$ and $W$ with $(Fk)^{\perp}$. 
In the setting of \REF{w1} we shall do it according to whether or not $h$ can be taken from $Z$. 

If $U$ is an anisotropic subspace of $V$ containing $h$, 
then $\p$ is nondegenerate on $U$, 
$V = U \oplus U^{\perp}$, 
and $W = (W \cap U) \oplus U^{\perp}$, 
where $U^{\perp} = \{x \in V \mid \p(x,\, U) = 0\}$. 
Hence if there exists $h$ in $Z$ such that $\p[h] = q$, 
that is, 
$q \in \p[Z]$, 
we have a Witt decomposition 
\begin{gather}
W = (W \cap Z) \oplus \sum_{i=1}^{r} (Fe_{i} + Ff_{i}) \label{wc1}
\end{gather}
with a core subspace $W \cap Z$ of dimension $t - 1$. 

If $q \not\in \p[Z]$, 
taking $h = qe_{1} + f_{1}$ in $Fe_{1} + Ff_{1}$, 
we see that $f_{1} - qe_{1} \in W$ and 
$Z \oplus F(f_{1} - qe_{1})$ must be an anisotropic space of dimension $t + 1$. 
Then we have a Witt decomposition 
\begin{gather}
W = (Z \oplus F(f_{1} - qe_{1})) \oplus \sum_{i=2}^{r} (Fe_{i} + Ff_{i}). \label{wc2}
\end{gather}
Therefore, 
it is sufficient to investigate the core subspace of $W$ in \REF{wc1} or \REF{wc2} 
because the characteristic algebra of the core space gives $Q(\psi)$. 
In the next two subsections we write $s$ for the core dimension of $W$. 

\subsection{The even-dimensional case}

In this case, 
the dimension of $W$ is odd. 
Then we have only to observe the core dimension $s$ of $W$, 
because $Q(\psi)$ is a division algebra if and only if $s = 3$. 
Put $K = F(\sqrt{\delta})$. 

If $n = 2$, 
then $Q(\psi) = M_{2}(F)$ by definition. 

Suppose $n > 2$. 
If $t = 0$, 
then $K = F$ and $B = Q(\p) = M_{2}(F)$. 
Clearly $s = 1$, 
so that $Q(\psi) = M_{2}(F)$. 

If $t = 4$, 
then $K = F$, 
$B$ is a division quaternion algebra, 
and $(Z,\, \p)$ can be identified with $(B,\, \beta)$ (cf.\ \cite[Theorem 7.5]{04}). 
Since $\beta[B^{\times}] = F^{\times}$, 
we have $s = 3$ and hence $Q(\psi) = B$. 

If $t = 2$, 
then $K \ne F$ and $(Z,\, \p)$ can be identified with $(K,\, c\kappa)$ with some $c \in F^{\times}$; 
see {\S}1.4 below. 
If $q \in c\kappa[K^{\times}]$, 
then $s = 1$, 
so that $Q(\psi) = M_{2}(F)$; 
otherwise, 
$Q(\psi) \ne M_{2}(F)$. 
Moreover, 
$B = Q(\p)$ is given by $\{K,\, c\}$. 
Since $[F^{\times} : N_{K/F}(K^{\times})] = 2$ by local class field theory, 
$q \in c\kappa[K^{\times}]$ if and only if $B = M_{2}(F)$ and $q \in \kappa[K^{\times}]$, 
or if $B \ne M_{2}(F)$ and $q \not\in \kappa[K^{\times}]$. 
Combining these, 
we have the assertions in the case $K \ne F$. 

\subsection{The odd-dimensional case}

In this case, the dimension of $W$ is even. 
Put $K = F(\sqrt{\delta q})$. 
By \REF{cd1}, 
$K \ne F$ if and only if $s = 2$; 
$Q(\psi) = M_{2}(F)$ if $s = 0$ and $Q(\psi)$ is a division algebra if $s = 4$. 
Also if $s = 2$, 
a core space $(Y,\, \psi)$ of $W$ can be identified with $(K,\, c\kappa)$ with some $c \in F^{\times}$. 
Then $Q(\psi) = A(c\kappa) = \{K,\, c\}$. 
Such an element $c$ can be given as follows (cf. \cite[{\S}7.2]{04}): 
Take an orthogonal basis $\{x,\, y\}$ of $Y$ with respect to $\psi$. 
The map $ax + by \mapsto b + c^{-1}a\sqrt{-\psi[x]\psi[y]}$ 
gives an isomorphism of $(Y,\, \psi)$ onto $(K,\, c\kappa)$ with $c = \psi[y] \ne 0$. 
Then $A^{+}(Y) \cong K$ and $A(Y) \cong K + Ky = \{K,\, c\}$, 
which is the characteristic algebra $Q(\psi)$. 

We first consider the three-dimensional case. 
\begin{lem} \label{l1}
Suppose $n = 3$ and $F$ is a number field. 
For $q \in F^{\times}$, 
$q$ belongs to $\p[V]$ if and only if the following three conditions hold: 
\begin{enumerate}
\item $q_{v} > 0$ for $v \in \mathbf{r}$ such that $\p_{v}$ is positive definite, 
\item $q_{v} < 0$ for $v \in \mathbf{r}$ such that $\p_{v}$ is negative definite, 
\item $\xi_{v}(\delta q) \ne 1$ for $v \in \mathbf{h}$ such that $v \mid D_{B}$. 
\end{enumerate}
\end{lem}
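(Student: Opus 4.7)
The approach is to reduce to a local question via the Hasse--Minkowski theorem: $q \in \p[V]$ if and only if $q \in \p_v[V_v]$ for every $v \in \mathbf{a} \cup \mathbf{h}$. Conditions (1)--(3) will then arise from a case analysis at each place.

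At the archimedean places, a nondegenerate ternary form over $\mathbf{C}$ is universal, and a ternary form over $\mathbf{R}$ is universal iff it is indefinite; the positive (resp.\ negative) definite case forces $q_v>0$ (resp.\ $q_v<0$), giving conditions (1) and (2). At a nonarchimedean $v$, formula \REF{cd2} shows that the core dimension is $t_v=1$ when $v\nmid D_{B}$ and $t_v=3$ when $v\mid D_{B}$. In the former case $\p_v$ is isotropic, its Witt decomposition contains a hyperbolic plane, and $\p_v$ is therefore universal on $F_v^{\times}$; no constraint is imposed, consistent with (3) being vacuous at such $v$.

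The substantive step is the anisotropic case $v\mid D_{B}$. Up to a scalar, any anisotropic ternary form over $F_v$ is the pure quaternion form of the unique division quaternion algebra $B_v=Q(\p_v)$, so one may write $(V,\p)_v \cong (B_v^{\circ},\, c\beta^{\circ})$ for some $c\in F_v^{\times}$. Since $\beta^{\circ}[x]=-x^{2}$ on $B_v^{\circ}$, the scalar $q$ is represented iff $x^{2}=-q/c$ has a solution in $B_v^{\circ}$, equivalently iff $F_v(\sqrt{-q/c})$ embeds in $B_v$; as every nonsplit quadratic extension of $F_v$ embeds in the local division quaternion algebra, this is equivalent to $\xi_v(-q/c)\ne 1$. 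Computing $\det(\beta^{\circ})$ in the standard basis $\sqrt{a},\omega,\sqrt{a}\omega$ of $B_v^{\circ}$ shows $\det(\beta^{\circ})\in F_v^{\times 2}$; hence $\det(\p_v)\equiv c \pmod{F_v^{\times 2}}$, and the sign convention $\delta=-\det(\p_v)$ for $n=3$ yields $\delta\equiv -c$, so $\xi_v(\delta q)=\xi_v(-q/c)$. The local condition at $v\mid D_{B}$ is therefore exactly $\xi_v(\delta q)\ne 1$, which is (3).

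The main technical point is the last discriminant bookkeeping identifying $-c$ with $\delta$ modulo $F_v^{\times 2}$; everything else is standard local quadratic form theory together with the existence and uniqueness of the local division quaternion algebra. Once the local conditions are pinned down, Hasse--Minkowski assembles them into the stated global equivalence.
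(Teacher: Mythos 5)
Your proof is correct and follows essentially the same route as the paper, which simply invokes the Hasse principle and cites Shimura's Lemma 4.2(1) of \cite{99b} for the nonarchimedean condition; your local analysis at $v \mid D_{B}$ via the identification of the anisotropic ternary space with $(B_v^{\circ},\, -\delta\beta^{\circ})$ is exactly the computation underlying that citation (and is the same model the paper uses in {\S}1.4). The discriminant bookkeeping $\xi_v(-q/c)=\xi_v(\delta q)$ checks out.
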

\begin{proof}
This can be seen from the Hasse principle. 
In particular, 
nonarchimedean case can be verified by applying \cite[Lemma 4.2(1)]{99b}. 
\end{proof}

Suppose $n = 3$. 
If $t = 1$, 
then $B = M_{2}(F)$ and $(Z,\, \p) = \langle F,\, \delta\rangle $. 
Since $q \in \p[Z]$ if and only if $K = F$, 
we have 
\begin{gather*}
Q(\psi) = 
\begin{cases}
M_{2}(F) & \text{if $K = F$}, \\ 
\{K,\, \delta\} & \text{if $K \ne F$}. 
\end{cases}
\end{gather*} 
In fact, 
if $K \ne F$, 
then $q \not\in \p[Z]$, 
so that $Fg \oplus F(f_{1} - qe_{1})$ is a core space of $W$ with $g \in Z$ such that $\p[g] = \delta$. 
Thus $Q(\psi) = \{K,\, \delta\}$. 
From this we see that $Q(\psi) = M_{2}(F)$ if $\delta \in \kappa[K^{\times}]$ 
and $Q(\psi)$ is a division algebra if $\delta \not\in \kappa[K^{\times}]$. 

If $t = 3$, 
then $B$ is a division algebra and $(V,\, \p) = (Z,\, \p)$ can be identified with 
$(B^{\circ},\, -\delta \beta^{\circ})$ (cf.\ \cite[{\S}3.2]{cq}\ or\ \cite[{\S}7.3]{04}). 
Clearly, 
$q \in \p[Z]$ and so $s = 2$. 
We note that $q \in \p[Z]$ if and only if $K \ne F$ by \LEM{l1}(3). 
Since $W$ is an anisotropic space of dimension $2$ and $Q(\psi) = A(\psi)$, 
we need an explicit orthogonal basis of $W$. 
To find such a basis, 
we embed $K$ into $B$. 
Then $B$ can be written as $B = K + K\omega = \{K,\, c\}$ 
with a fixed element $c \in F^{\times}$ so that $c \not\in \kappa[K^{\times}]$. 
Put $k = \sqrt{\delta^{-1}q} \in K^{\times}$; 
then $B^{\circ} = Fk \oplus (F\omega + Fk\omega)$. 
Since $\p[k] = -\delta \beta^{\circ}[k] = q$, 
(changing a given $h$ to $k$, )
we have $W = F\omega + Fk\omega$. 
This basis satisfies $2\beta(\omega,\, k\omega) = -c\mathrm{Tr}_{K/F}(k^{\iota}) = 0$, 
and thus $\{\omega,\, k\omega \}$ is an orthogonal basis of $W$. 
Hence, 
in view of $c \not\in \kappa[K^{\times}]$, 
we see that 
\begin{gather}
Q(\psi) = \{K,\, \p[\omega]\} = \{K,\, \delta c\} = 
	\begin{cases}
	M_{2}(F) & \text{if $\delta \not\in \kappa[K^{\times}]$}, \\
	B & \text{if $\delta \in \kappa[K^{\times}]$}. 
	\end{cases} 
	\label{q}
\end{gather}

Next suppose $n > 3$. 
If $t = 1$, 
we can determine $Q(\psi)$ in the same way as in the case $n = 3,\, t = 1$. 

If $t = 3$, 
then $B$ is a division algebra. 
Assume that $q \in \p[Z]$; 
then $s = 2$ and $K \ne F$. 
Taking $B = \{K,\, c\}$ and $k \in K^{\times}$ with $c \not\in \kappa[K^{\times}]$ 
as considered in the case $n = t = 3$, 
we have a core subspace $F\omega \oplus Fk\omega$ of $W$ of dimension $2$. 
Then $Q(\psi)$ can be given by \REF{q}. 
Assume that $q \not\in \p[Z]$; 
then $s = 4$ and $K = F$. 
Hence we have a division algebra $Q(\psi) = B$. 

Summing up all these results, 
we obtain \THM{t1}. 

\section{Theorem on $[M/L \cap W]$ in the local case}

\subsection{Discriminant ideal}

Let $(V,\, \p)$ be a quadratic space over an algebraic number field or a nonarchimedean local field $F$. 
For a $\g$-lattice $L$ in $V$, 
we put 
\begin{gather*}
\widetilde{L} = L\,\widetilde{\,} = \{ x \in V \mid 2\p(x,\, L) \subset \g \}. 
\end{gather*}
We note that $L \subset \widetilde{L}$ if $\p[L] \subset \g$. 
By a \textit{$\g$-maximal} lattice $L$ with respect to $\p$, 
we understand a $\g$-lattice $L$ in $V$ 
which is maximal among $\g$-lattices on which the values $\p[x]$ are contained in $\g$. 

For $q \in F$ and a $\g$-ideal $\mathfrak{b}$ of $F$, 
we put 
\begin{gather*}
L[q] = \{ x \in L \mid \p[x] = q \}, \quad 
L[q,\, \mathfrak{b}] = \{ x \in V \mid \p[x] = q,\ \p(x,\, L) = \mathfrak{b} \}. 
\end{gather*}

For two $\g$-lattices $L$ and $M$ in $V$, 
we denote by $[L/M]$ a $\g$-ideal of $F$ generated over $\g$ by 
$\det(\alpha)$ of all $F$-linear automorphisms $\alpha$ of $V$ 
such that $L\alpha \subset M$. 
If $F$ is a global field, 
then $[L/M] = \prod_{v \in \mathbf{h}} [L_{v}/M_{v}]$ 
with the localization $[L/M]_{v} = [L_{v}/M_{v}]$ at each $v$. 
We call $[\widetilde{L}/L]$ the \textit{discriminant ideal} of $(V, \p)$ 
if $L$ is a $\g$-maximal lattice in $V$ with respect to $\p$. 
This is an invariant of $(V,\, \p)$ and independent of the choice of $L$, 
which follows from the basic fact that 
all $\g$-maximal lattices form a single class with respect to $SO^{\p}(V)$ when $F$ is a local field. 
If $F$ is a local field, 
by \LEM{lid}(4) below, 
the discriminant ideal of $\p$ coincides with that of a core space of $\p$. 

In the later argument we will often need the discriminant ideals of local spaces, 
which can be obtained by applying \cite[Theorem 6.2(ii)\ and\ (iii)]{cq} with suitable localization. 
We restate that theorem due to Shimura: 

\begin{thm} $\mathrm{(Shimura\, \cite[Theorem\ 6.2(ii)\ and\ (iii)]{cq})}$ \label{di}
Given $(V,\, \p)$ be as above with an algebraic number field $F$, 
suppose that $L$ is a $\g$-maximal lattice in $V$ with respect to $\p$. 
Let $K=F(\sqrt{\delta})$ be the discriminant field of $\p$ and 
$\mathfrak{e}$ the product of the prime ideals of $F$ ramified in $Q(\p)$. 
\begin{enumerate}
\item Suppose $n$ is even. 
Let $\mathfrak{e}_{1}$ be the product of 
all prime factors of $\mathfrak{e}$ that do not ramify in $K$. 
Then $[\widetilde{L}/L] = D_{K/F}\mathfrak{e}_{1}^{2}$. 
Here we understand that $D_{K/F} = \g$ if $K = F$. 
\item Suppose $n$ is odd. 
Put $\delta \g = \mathfrak{a}\mathfrak{b}^{2}$ 
with a squarefree integral ideal $\mathfrak{a}$ and a fractional ideal $\mathfrak{b}$. 
Then $[\widetilde{L}/L] = 2\mathfrak{a}^{-1}\mathfrak{e}^{2} \cap 2\mathfrak{a}$. 
\end{enumerate}
\end{thm}

\begin{lem} \label{lid}
For $\g$-lattices $L$ and $M$ in $V$, 
except (5), 
the following assertions hold: 
\begin{enumerate}
\item There exists $\alpha \in GL(V)$ such that $M\alpha = L$. 
Moreover, if $\alpha$ is such an element, 
then $[M/L] = \det(\alpha)\g$. 
\item If $N$ is a $\g$-lattice in $V$, 
then $[N/L] = [N/M][M/L]$. 
\item If $L \subset M$ and $\p[M] \subset \g$, 
then $[\widetilde{L}/L] = [M/L]^{2}[\widetilde{M}/M]$ and $[\widetilde{L}/\widetilde{M}] = [M/L]$. 
\item If $L \subset M$, 
then $[M/L] \subset \g$. 
Moreover, 
$L = M$ if and only if $[M/L] = \g$. 
\item Suppose $V = U \oplus U^{\perp}$ with a subspace $U$ of $V$. 
Let $L$ be a $\g$-lattice in $U$ and $M$ a $\g$-lattice in $U^{\perp}$. 
Then $[(L + M)\widetilde{\,}/L + M] = [\widetilde{L}/L][\widetilde{M}/M]$. 
\item Suppose that $\p[L] \subset \g$ and $M$ is $\g$-maximal with respect to $\p$. 
Then $[M/L]$ is independent of the choice of $M$. 
Moreover, 
$[M/L] \subset \g$ and $[\widetilde{L}/L] = [M/L]^{2}[\widetilde{M}/M]$. 
\item Under the assumptions in (6), 
$L$ is $\g$-maximal if and only if $[M/L] = \g$, 
or equivalently, 
if $[\widetilde{L}/L] = [\widetilde{M}/M]$. 
\end{enumerate}
\end{lem}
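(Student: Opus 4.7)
The plan is to establish (1)--(5) by elementary-divisor theory for two $\g$-lattices in $V$ (reducing to the principal ideal case by localization), and then to derive (6)--(7) from these together with the fact that, locally, all $\g$-maximal lattices form a single $O^{\p}$-orbit. For (1), elementary-divisor theory produces $\alpha\in GL(V)$ with $M\alpha=L$; any other $\beta\in GL(V)$ with $M\beta\subset L=M\alpha$ satisfies $\gamma:=\alpha^{-1}\beta\in\{\eta\in GL(V):M\eta\subset M\}$, and locally where $M_{v}$ is free over $\g_{v}$ this forces $\det(\gamma)\in\g_{v}$ at each $v$. Hence $[M/L]=\det(\alpha)\g$. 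Part (2) follows by composition of determinants, and part (4) is immediate: $L\subset M$ means $\alpha$ has matrix over $\g$ locally, so $\det(\alpha)\in\g$, and $L=M$ is equivalent to $\alpha$ being an automorphism of $M$, i.e.\ $\det(\alpha)\in\g^{\times}$.

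For (3), write $L=M\alpha$ and introduce the $\p$-adjoint $\alpha^{*}$ characterized by $\p(x\alpha^{*},y)=\p(x,y\alpha)$. Then $x\in\widetilde{L}$ iff $2\p(x\alpha^{*},M)\subset\g$ iff $x\alpha^{*}\in\widetilde{M}$, so $\widetilde{L}\alpha^{*}=\widetilde{M}$; in a basis with Gram matrix $G$, the matrix of $\alpha^{*}$ is $GA^{t}G^{-1}$, hence $\det(\alpha^{*})=\det(\alpha)$. By (1) this gives $[\widetilde{L}/\widetilde{M}]=[M/L]$, and combining with (2) yields $[\widetilde{L}/L]=[\widetilde{L}/\widetilde{M}][\widetilde{M}/M][M/L]=[M/L]^{2}[\widetilde{M}/M]$. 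For (5), the decomposition $V=U\oplus U^{\perp}$ makes the Gram matrix block-diagonal in adapted bases; a direct check using $\p(u+w,L+M)=\p(u,L)+\p(w,M)$ gives $(L+M)\widetilde{\,}=\widetilde{L}+\widetilde{M}$, and the change-of-basis matrix being block-diagonal produces the factorization of the discriminant ideal.

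For (6), the hypothesis $\p[L]\subset\g$ allows me to embed $L$ in some $\g$-maximal lattice $M_{0}$ via Zorn's lemma (the family of integer-valued overlattices of $L$ is bounded above in $\widetilde{L}$). Applying (3)--(4) gives $[M_{0}/L]\subset\g$ and $[\widetilde{L}/L]=[M_{0}/L]^{2}[\widetilde{M_{0}}/M_{0}]$. To pass from $M_{0}$ to an arbitrary maximal $M$, I localize: at each $v\in\mathbf{h}$ there exists $\sigma_{v}\in O^{\p}(V_{v})$ with $M_{v}=M_{0,v}\sigma_{v}$, and the bijection $\beta\leftrightarrow\sigma_{v}\beta$ identifies $\{\det(\beta):M_{v}\beta\subset L_{v}\}$ with $\{\det(\beta):M_{0,v}\beta\subset L_{v}\}$ up to the sign $\det(\sigma_{v})=\pm 1$, so both sets generate the same $\g_{v}$-ideal; hence $[M/L]=[M_{0}/L]$. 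Part (7) then follows from (4) and (6): $L$ is $\g$-maximal iff $L=M_{0}$ iff $[M_{0}/L]=\g$, equivalently $[\widetilde{L}/L]=[\widetilde{M}/M]$ via the formula in (6).

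The main obstacle will be the correct treatment of the adjoint $\alpha^{*}$ and the dual-lattice identity $\widetilde{L}\alpha^{*}=\widetilde{M}$ underlying (3), together with the independence assertion in (6), which rests on the nontrivial fact that $O^{\p}(V_{v})$ acts transitively on the set of $\g_{v}$-maximal lattices---an input from the local structure theory of quadratic forms rather than a purely formal determinant manipulation.
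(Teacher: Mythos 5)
Your proposal is correct and follows essentially the same route as the paper: localization, elementary-divisor theory for (1)--(5), and transitivity of the (special) orthogonal group on maximal lattices for (6)--(7). The only variation is in (3), where you use the $\p$-adjoint identity $\widetilde{L}\alpha^{*}=\widetilde{M}$ in place of the paper's explicit computation with a simultaneous basis and the Gram matrix $2\p_{0}$; both amount to the same determinant calculation.
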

\begin{proof}
Each assertion in the global case can be reduced to that in the local case by localization. 
Here we note that $L$ is maximal if and only if $L_{v}$ is maximal for every $v \in \mathbf{h}$. 
Hence we prove the assertions, 
assuming $F$ is a nonarchimedean local field. 

To see (1), 
by considering $\g$-bases of $L$ and $M$, 
we can take $0 \ne c \in \g$ so that $cL \subset M$. 
Then there exists a $\g$-basis $\{k_{i}\}_{i=1}^{n}$ of $M$ such that 
$cL = \g \varepsilon_{1}k_{1} + \cdots + \g \varepsilon_{n}k_{n}$ 
with the elementary divisors $\varepsilon_{1}\g,\, \cdots,\, \varepsilon_{n}\g$, 
where $n$ is the dimension of $V$. 
Employing these expressions of $cL$ and $M$, 
we can find an $F$-linear automorphism $\alpha$ of $V$ such that $M\alpha = cL$. 
Then $c^{-1}\alpha$ is the required surjection. 
As for the second assertion of (1), 
clearly $\det(\alpha)\g \subset [M/L]$. 
Let $\gamma \in GL(V)$ such that $M\gamma \subset L$. 
Then $M\gamma \alpha^{-1} \subset M$, 
which implies $\det(\gamma \alpha^{-1}) \in \g$. 
Thus we have $\det(\gamma) \in \det(\alpha)\g$, 
so that $[M/L] \subset \det(\alpha)\g$ as desired. 
Assertion (2) can be seen from (1). 

To see (3), 
let $L \subset M$ and $\p[M] \subset \g$; 
then $L \subset M \subset \widetilde{M} \subset \widetilde{L}$. 
Since $L \subset M$, 
we can find a $\g$-basis $\{k_{i}\}_{i=1}^{n}$ of $M$ such that 
$L = \g \varepsilon_{1}k_{1} + \cdots + \g \varepsilon_{n}k_{n}$ 
with some $\{\varepsilon_{i}\}_{i=1}^{n}$. 
Then $(V,\, \p)$ is isomorphic to $(F^{1}_{n},\, \p_{0})$ 
with the matrix $\p_{0}$ representing $\p$ with respect to $\{k_{i}\}_{i=1}^{n}$. 
Identifying $V$ with $F^{1}_{n}$ under this isomorphism, 
we see that $M = \g^{1}_{n}$, 
$L = \g^{1}_{n}\varepsilon$, 
$\widetilde{M} = \g^{1}_{n}(2\p_{0})^{-1}$, 
and $\widetilde{L} = \g^{1}_{n}(2\p_{0}\varepsilon)^{-1}$, 
where $\varepsilon = \mathrm{diag}[\varepsilon_{1},\, \cdots,\, \varepsilon_{n}]$. 
Since $[M/L] = \det(\varepsilon)\g$ and $[\widetilde{M}/M] = \det(2\p_{0})\g$, 
we have $[\widetilde{L}/\widetilde{M}] = [M/L]$ and 
$[\widetilde{L}/L] = [\g^{1}_{n}/\g^{1}_{n}\varepsilon 2\p_{0}\varepsilon] = [M/L]^{2}[\widetilde{M}/M]$. 
To see (4), 
with the notation in the above proof of (3), 
observe that $[M/L] \subset \g$ and $[M/L] = \g$ if and only if $\det(\varepsilon) \in \g^{\times}$. 
This proves (4). 

As for (5), 
by fixing $\g$-bases of $L$ and $M$, 
we can put $L = \g^{1}_{\ell}$, $M = \g^{1}_{m}$, and $\p = \mathrm{diag}[\p_{1},\, \p_{2}]$, 
where $\p_{1}$ (resp. $\p_{2}$) is the matrix representing the restriction of $\p$ 
to $U$ (resp. $U^{\perp}$) with respect to that basis. 
Then we have 
$[(L + M)\widetilde{\,}/L + M] = \det(2\mathrm{diag}[\p_{1},\, \p_{2}])\g = [\widetilde{L}/L][\widetilde{M}/M]$. 

To see (6), 
let $N$ be a $\g$-maximal lattice in $V$ with respect to $\p$. 
Since two maximal lattices are isomorphic under $SO^{\p}(V)$, 
we have $N = M\gamma$ with $\gamma \in SO^{\p}(V)$; 
see \cite[Lemma 6.9]{04}, for example. 
Then $[N/L] = [M/L]$, 
which implies that $[M/L]$ is independent of the choice of $M$. 
Moreover, 
if $\p[L] \subset \g$, 
we can take $N$ so that $L \subset N$, 
and hence by (3) we have 
$[\widetilde{L}/L] = [N/L]^{2}[\widetilde{N}/N] = [M/L]^{2}[\widetilde{M}/M]$ 
because the discriminant ideal does not depend on $N$. 
As for (7), 
for a $\g$-maximal lattice $N$ containing $L$, 
using (4) and (6), 
we have $L = N$ if $[M/L] = \g$. 
Conversely if $L$ is maximal, 
then $[M/L] = [L/L] = \g$. 
The last assertion follows from (6). 
\end{proof}

\subsection{The ideal $[M/L \cap W]$ in the local case}

Let $F$ be a nonarchimedean local field, 
$\g$ its maximal order, 
and $\mathfrak{p}$ the prime ideal of $F$. 
Let $(V,\, \p)$ be a quadratic space over $F$ with invariants $\{n,\, F(\sqrt{\delta}),\, B\}$ 
and $L$ a $\g$-maximal lattice in $V$ with respect to $\p$. 
Then there exists a Witt decomposition of $V$ as follows (cf.\ \cite[Lemma 6.5]{04}): 
\begin{gather}
V = Z \oplus \sum_{i=1}^{r} (Fe_{i} + Ff_{i}),\quad 
L = N + \sum_{i=1}^{r} (\g e_{i} + \g f_{i}), \label{w2} \\
N = \{x \in Z \mid \p[x] \in \g \},\quad 
\p(e_{i},\, e_{j}) = \p(f_{i},\, f_{j}) = 0,\quad 
2\p(e_{i},\, f_{j}) = \delta_{ij}. \nonumber 
\end{gather}
Here $Z$ is a core subspace of $(V,\, \p)$ and 
$N$ is a unique $\g$-maximal lattice in $Z$ with respect to $\p$. 
Let $t$ be the core dimension of $V$; 
then $n = t + 2r$ and $0 \le t \le 4$. 
\\

Hereafter we assume $n > 1$. 
For $h \in V$ such that $\p[h] = q \ne 0$, 
put $W = (Fh)^{\perp}$ defined by \REF{orc} and let $\psi$ be the restriction of $\p$ to $W$. 
Then the invariants of $(W,\, \psi)$ is given by 
$\{n-1,\, F(\sqrt{(-1)^{n-1}\delta q}),\, Q(\psi)\}$ and $Q(\psi)$ by \THM{t1}. 
\begin{thm} \label{t2}
Let $(V,\, \p)$ be a quadratic space of dimension $n\,(> 1)$ over a nonarchimedean local field $F$ 
and $L$ a $\g$-maximal lattice in $V$ with respect to $\p$. 
For an element $h$ of $L$ such that $\p[h] \ne 0$, 
put $q = \p[h]$ and $W = (Fh)^{\perp}$ and let $\psi$ be the restriction of $\p$ to $W$. 
Let $M$ be a $\g$-maximal lattice in $W$ with respect to $\psi$. 
Then there exists a nonnegative integer $\lambda(q)$ such that 
\begin{gather}
[M/L \cap W] = \mathfrak{p}^{\lambda(q)-m} \label{id1}
\end{gather}
if $2\p(h,\, L) = \mathfrak{p}^{m}$. 
\end{thm}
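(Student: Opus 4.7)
The plan is to relate $[M/L\cap W]$ to $[\widetilde{L}/L]$ by a Gram-determinant computation, then extract $[M/L\cap W]$ using \LEM{lid}(6). First, pick a generator of $L$ transverse to $L\cap W$: since $L/(L\cap W)$ embeds in $V/W \cong Fh$ as a cyclic $\g$-module and $\g$ is a discrete valuation ring, the short exact sequence $0 \to L\cap W \to L \to L/(L\cap W) \to 0$ splits and we can write $L = (L\cap W) + \g e$ for some $e = w_{0} + ch$ with $w_{0} \in W$ and $c \in F^{\times}$. The projection $x \mapsto q^{-1}\p(x, h)h$ sends $L$ onto $q^{-1}\p(L, h)h = c\g h$, producing the key identification $2qc\g = 2\p(h, L) = \mathfrak{p}^{m}$.

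Second, carry out a Gram-determinant calculation. Fix a $\g$-basis $\{x_{1},\ldots,x_{n-1}\}$ of $L\cap W$ and take $\{x_{1},\ldots,x_{n-1}, e\}$ as a $\g$-basis of $L$. Let $\Phi$ and $\Phi_{W}$ be the Gram matrices of $\p$ in these bases. Since $\p(x_{i}, h) = 0$, the off-diagonal block is $v_{i} = \p(x_{i}, e) = \p(x_{i}, w_{0})$, and since $\{x_{i}\}$ is also an $F$-basis of $W$, writing $w_{0}$ in this basis gives $v^{t}\Phi_{W}^{-1}v = \p[w_{0}]$. Schur-complement expansion then yields
\begin{gather*}
\det\Phi = \det\Phi_{W}\cdot(\p[e] - \p[w_{0}]) = \det\Phi_{W}\cdot\p[ch] = c^{2}q\det\Phi_{W}.
\end{gather*}
Since $[\widetilde{L}/L] = \det(2\Phi)\g$ and $[(L\cap W)\widetilde{\,}/L\cap W] = \det(2\Phi_{W})\g$ (as in the proof of \LEM{lid}(3)), this reads
\begin{gather*}
[\widetilde{L}/L] = 2c^{2}q\cdot[(L\cap W)\widetilde{\,}/L\cap W].
\end{gather*}

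For the final step, apply \LEM{lid}(6) to $L\cap W \subset M$ in $W$ (which satisfies $\psi[L\cap W] \subset \g$) to get $[(L\cap W)\widetilde{\,}/L\cap W] = [M/L\cap W]^{2}[\widetilde{M}/M]$. Substituting and multiplying by $2q$ produces
\begin{gather*}
2q[\widetilde{L}/L] = (2qc)^{2}[M/L\cap W]^{2}[\widetilde{M}/M],
\end{gather*}
so the $\g$-ideal $\mathfrak{b}(q)$ defined by $2q[\widetilde{L}/L] = \mathfrak{b}(q)^{2}[\widetilde{M}/M]$ equals $(2qc\g)\cdot[M/L\cap W] = \mathfrak{p}^{m}\cdot[M/L\cap W]$. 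Writing $\mathfrak{b}(q) = \mathfrak{p}^{\lambda(q)}$ gives $[M/L\cap W] = \mathfrak{p}^{\lambda(q)-m}$, and $\lambda(q) \geq 0$ because $[M/L\cap W] \subset \g$ by \LEM{lid}(6) together with $m \geq 0$ (since $h \in L$ forces $2\p(h, L) \subset \g$).

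The main obstacle is the Gram-determinant step: one must remember that $w_{0} = e - ch$ lies in $W$ but typically \emph{not} in $L\cap W$, so the identity $v^{t}\Phi_{W}^{-1}v = \p[w_{0}]$ has to be computed by treating $\{x_{i}\}$ as an $F$-basis of $W$ rather than as a $\g$-basis of $L\cap W$. Once this is cleanly isolated, the algebra collapses, and the fact that $\mathfrak{b}(q)$ depends only on $q$ (via \THM{t1}, which controls the isomorphism class of $(W,\psi)$ and hence $[\widetilde{M}/M]$), not on the particular choice of $L$ or $h$, becomes transparent from the defining relation $\mathfrak{b}(q)^{2} = 2q[\widetilde{L}/L]/[\widetilde{M}/M]$.
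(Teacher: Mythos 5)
Your argument is correct, and it reaches \THM{t2} by a genuinely different and far shorter route than the paper. The paper first invokes Yoshinaga's orbit decomposition \REF{re0}--\REF{re} to replace $h$ by an explicit representative $k_{m}$, reduces to the $(t+2)$-dimensional subspace $Z \oplus (Fe_{1}+Ff_{1})$, and then, case by case over the core dimension $t \in \{0,1,2,3,4\}$ and the residue characteristic, constructs an explicit $\g$-maximal lattice $M$ in $W$ and an explicit surjection $M \to L \cap W$, reading off $[M/L\cap W]$ each time (all of Section 3). You instead split $L = (L\cap W) \oplus \g e$ with $e = w_{0}+ch$ (valid since $L/(L\cap W)$ is torsion-free of rank one over the discrete valuation ring $\g$, and $2cq\g = 2\p(h,L) = \mathfrak{p}^{m}$ as you say), and the Schur-complement identity $\det\Phi = c^{2}q\det\Phi_{W}$ --- with the correctly flagged subtlety that $v^{t}\Phi_{W}^{-1}v = \p[w_{0}]$ is computed over $F$, not over $\g$ --- gives $[\widetilde{L}/L] = 2c^{2}q\,[(L\cap W)\widetilde{\,}/L\cap W]$; combining with \REF{id2} yields $2q[\widetilde{L}/L] = \mathfrak{p}^{2m}[M/L\cap W]^{2}[\widetilde{M}/M]$, from which $\lambda(q)$, its nonnegativity, and its independence of $h$ and $m$ all drop out since $[\widetilde{L}/L]$ and $[\widetilde{M}/M]$ are invariants of $(V,\p)$ and $(W,\psi)$ and the latter isomorphism class depends only on $(V,\p)$ and $q$. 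What each approach buys: yours proves exactly the qualitative statement of \THM{t2} uniformly, and as a free by-product gives \COR{co1}(4) and \LEM{bq} (your identity shows directly that $2q[\widetilde{L}/L][\widetilde{M}/M]^{-1}$ is a square, which the paper has to argue separately); the paper's computation, by contrast, produces the explicit values of $\lambda(q)$ recorded in \REF{corid} and the identification $\lambda(q)=\tau(q)$ in \COR{co1}(2), which are needed for the applications and do not follow from the determinant identity alone.
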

We note by \LEM{lid}(6) that $[M/L \cap W]$ is independent of the choice of $M$. 
\\

The proof will be done in this and next sections. 
We begin by explaining that $[M/L \cap W]$ can be dealt with in a specified setting. 

First of all, 
by \LEM{lid}(6) we have 
\begin{gather}
[(L \cap W)\widetilde{\,}/L \cap W] = [M/L \cap W]^{2}[\widetilde{M}/M]. \label{id2}
\end{gather}
Since $[\widetilde{M}/M]$ can be given by \THM{di}, 
our task is to determine either $[M/L \cap W]$ or $[(L \cap W)\widetilde{\,}/L \cap W]$. 

Let $h \in L$, 
$0 \ne q = \p[h] \in \g$, 
and take a Witt decomposition for $(V,\, \p)$ as in \REF{w2}. 
By virtue of Yoshinaga \cite[Theorem 3.5]{Y}, 
$L[q]$ is decomposed into finitely many sets $L[q,\, 2^{-1}\mathfrak{p}^{m}]$ as follows: 
\begin{eqnarray}
L[q] &=& 
	\begin{cases}
	\bigsqcup_{m = 0}^{\tau(q)} L[q,\, 2^{-1}\mathfrak{p}^{m}] & \text{if $r \ne 0$}, \\ 
	L[q,\, 2^{-1}\mathfrak{p}^{\tau(q)}] & \text{if $r = 0$}, 
	\end{cases} \label{re0} \\
L[q,\, 2^{-1}\mathfrak{p}^{m}] &=& k_{m}C(L), \quad \text{except when $t = 0$ and $r = 1$} \label{re} 
\end{eqnarray}
with explicitly given integer $\tau(q)\geq0$ and elements $k_{m}$ for $0 \le m \le \tau(q)$. 
Here 
\begin{gather*}
C(L) = \{ \gamma \in SO^{\p}(V) \mid L \gamma = L \}. 
\end{gather*}
Each $k_{m}$ is given as follows: 
\begin{enumerate}
\item If $t = 0$ and $r > 1$, 
then $k_{m} = q\pi^{-m}e_{1} + \pi^{m}f_{1} \in \g e_{1} + \g f_{1}$. 
We also take such a $k_{m}$ in the case $t = 0$ and $r = 1$; 
see Case(1) below. 
\item If $t \ne 0$ and $r = 0$, 
then $k_{m}$ is an arbitrarily fixed element $z$ of $L[q]$. 
\item If $t \ne 0$ and $r \ne 0$, 
then there are the following three cases; 
\begin{enumerate}
\item $k_{m} = q\pi^{-m}e_{1} + \pi^{m}f_{1} \in \g e_{1} + \g f_{1}$. 
\item $k_{m} = z + \pi^{-m}q(1-s)e_{1} + \pi^{m}f_{1} \in N + \g e_{1} + \g f_{1}$ 
for an arbitrarily fixed $z \in N[sq]$ with a suitable $s \in \g^{\times}$. 
\item $k_{m} = z + \pi^{m}e_{1} \in N + \g e_{1}$ for an arbitrarily fixed $z \in N[q]$. 
\end{enumerate}
\end{enumerate}
For the detailed conditions for the choice of $k_{m}$ in (3), 
and also those for $\tau(q)$, 
we will explain in Case (1), Case (2), and Section 3 below. 

If $2\p(h,\, L) = \mathfrak{p}^{m}$, 
then $h\gamma = k_{m}$ with some $\gamma \in C(L)$ by \REF{re}, 
so that $(L \cap W)\gamma = L \cap W_{0}$, 
where $W_{0} = (Fk_{m})^{\perp}$. 
Hence $[M/L \cap W] = [M\gamma /L \cap W_{0}]$. 
Since the latter ideal does not depend on the $M\gamma$, 
we have $[M\gamma /L \cap W_{0}] = [M_{0}/L \cap W_{0}]$ 
for an arbitrary $\g$-maximal lattice $M_{0}$ in $W_{0}$. 
Thus, 
by using an explicit element $k_{m}$, 
we can consider $[M_{0}/L \cap W_{0}]$ instead of $[M/L \cap W]$ for a given $h$. 

Set $U = Z \oplus (Fe_{1} + Ff_{1})$ and $W_{1} = W_{0} \cap U$. 
Since $k_{m}$ belongs to $U$ in any case above, 
we see that 
\begin{gather*}
W_{0} = W_{1} \oplus \sum_{i=2}^{r} (Fe_{i} + Ff_{i}), \quad 
L \cap W_{0} = (L \cap W_{1}) + \sum_{i=2}^{r} (\g e_{i} + \g f_{i}). 
\end{gather*}
Then by \LEM{lid}(5) we have 
$[(L \cap W_{0})\widetilde{\,}/L \cap W_{0}] = [(L \cap W_{1})\widetilde{\,}/L \cap W_{1}]$. 
Therefore, 
in view of this fact, 
it is sufficient to prove our theorem in the setting \REF{w2} 
with $U$ of dimension $t + 2$ in place of $V$ and with $W_{1}$ of dimension $t + 1$ in place of $W$. 

Hereafter, 
we put $h = k_{m}$ and $W = W_{1}$ for simplicity; 
also put $e = e_{1}$, $f = f_{1}$, 
and $q \in \pi^{\nu} \g^{\times}$ with $0 \le \nu \in \mathbf{Z}$. 
\\

\textbf{Case(1)}: $t = 0$ and $r > 1$. 
In this case, 
$V = Fe + Ff$, $L = \g e + \g f$, 
and $h = q\pi^{-m}e + \pi^{m}f \in L$ with $\tau(q) = [\nu/2]$. 
Since $\p(g,\, h) = 0$ for $g = \pi^{m}f - q\pi^{-m}e$, 
we see that $W = Fg$ and $M = \g \pi^{-[\nu/2]}g$ is a $\g$-maximal lattice in $W$. 
For $xg \in W$ with $x \in F$, 
$xg \in L$ if and only if $x \in \mathfrak{p}^{-m}$; 
hence $L \cap W = \g \pi^{-m}g$. 
Then we have $[M/L \cap W] = \pi^{[\nu/2]-m}\g = \mathfrak{p}^{[\nu/2]-m}$. 

As for the case $t = 0$ and $r = 1$, 
\cite[Theorem 3.2(2)]{Y} shows that 
$L[q,\, 2^{-1}\mathfrak{p}^{m}] = h_{1}C(L) \sqcup h_{2}C(L)$ if $m \ne \nu/2$, 
where $h_{1} = q\pi^{-m}e + \pi^{m}f$ and $h_{2} = q\pi^{m-\nu}e + \pi^{\nu-m}f$. 
So that we have to verify our theorem to the case $h = h_{2}$; 
but it can be seen in a similar way. 
\\

\textbf{Case(2)}: $t \ne 0$ and $r = 0$. 
In this case, 
$V = Z$ is anisotropic, 
$L = N$, 
and $2\p(h,\, L) = \mathfrak{p}^{\tau(q)}$ for $h \in L[q]$. 
Then $W$ is anisotropic. 
Let $M$ be a $\g$-maximal lattice in $W$, 
which is given by $M = \{x \in W \mid \p[x] \in \g \}$. 
Then we have $L \cap W = N \cap W = M$, 
and hence $[M/L \cap W] = \g$. 
This gives the required fact. 
Clearly $\lambda(q)$ of \REF{id1} is $\tau(q)$ in the present case, 
which will be stated in \REF{corid} after the proof of \THM{t2}. 
We also note in the present case that 
$C(L) = SO^{\p}$ and $C(L \cap W) = SO^{\psi}$ 
by the uniqueness of maximal lattice in the anisotropic space. 
\\

\textbf{Case(3)}: $t \ne 0$ and $r \ne 0$. 
In this case, 
we fix the core dimension $t$ and consider $[M/L \cap W]$ 
with the representative $h$ in (i), (ii), or (iii) of (3). 
Before starting the proof, 
we state an auxiliary lemma. 
\begin{lem} \label{l2}
Let the notation be as in \THM{t2}. 
In the setting \REF{w2} with $n = t + 2$ as explained above, 
the following assertions hold: 
\begin{enumerate}
\item Suppose $L \cap W \subset M$. 
Let $\{\varepsilon_{1}\g,\, \cdots,\, \varepsilon_{t+1}\g \}$ be the elementary divisors 
determined by $M$ and $L \cap W$. 
Then $[M/L \cap W] = \varepsilon_{1} \cdots \varepsilon_{t+1}\g$. 
\item Let $h = q\pi^{-m}e + \pi^{m}f \in \g e + \g f$ 
be as in (i) of Case (3) with $0 \le m \le [\nu/2]$. 
If $q \not\in \p[Z]$, 
then $[M/L \cap W] = \mathfrak{p}^{[\nu/2]-m}$. 
\item Let $h = z + \pi^{m}e \in N + \g e$ be as in (iii) of Case (3). 
Then 
\begin{gather}
W = (Z \cap (Fz)^{\perp}) \oplus (Fe + Ff_{0}), \quad 
f_{0} = f - \frac{\pi^{2m}}{4q}e - \frac{\pi^{m}}{2q}z, \label{f0}
\end{gather}
is a Witt decomposition of $(W,\, \psi)$. 
\item With the notation in assertion (3) let $N_{0}$ be the $\g$-maximal lattice in $Z \cap (Fz)^{\perp}$ 
and set $M = N_{0} + \g e + \g f_{0}$, 
which is a $\g$-maximal lattice in $W$. 
Take a $\g$-basis $\{k_{1},\, \cdots ,\, k_{t}\}$ of $N$ 
so that $N_{0} = \sum_{i=1}^{t-1} \g \varepsilon_{i}k_{i}$ 
with the elementary divisors $\{\varepsilon_{i}\g \}_{i=1}^{t-1}$. 
Put $z = \sum_{i=1}^{t} a_{i}k_{i} \in N$ and $\mathfrak{p}^{\lambda} = \g \cap 2q\pi^{-m}a_{t}^{-1}\g$. 
Then $[M/L \cap W] = (\varepsilon_{1} \cdots \varepsilon_{t-1})^{-1}\mathfrak{p}^{\lambda}$. 
In particular if $t = 1$, 
let $N = \g k_{1}$ and put $z = a_{1}k_{1} \in N$. 
Then $[M/L \cap W] = \g \cap 2q\pi^{-m}a_{1}^{-1}\g$.  
\end{enumerate}
\end{lem}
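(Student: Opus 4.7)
For (1), I would invoke the elementary divisor theorem over the DVR $\g$: there exist a $\g$-basis $\{m_{i}\}$ of $M$ and elements $\varepsilon_{i} \in F^{\times}$ such that $\{\varepsilon_{i}m_{i}\}_{i=1}^{t+1}$ is a $\g$-basis of $L \cap W$. The diagonal automorphism $\alpha$ defined by $m_{i}\alpha = \varepsilon_{i}m_{i}$ satisfies $M\alpha = L \cap W$ and $\det(\alpha) = \varepsilon_{1}\cdots\varepsilon_{t+1}$, so \LEM{lid}(1) yields the claim. Part (2) is a direct application of (1). The vector $g = \pi^{m}f - q\pi^{-m}e$ lies in $W$, is orthogonal to $Z$, and satisfies $\p[g] = -q$, so $W = Z \oplus Fg$. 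Since $q \not\in \p[Z]$ forces $W$ to be anisotropic (an isotropic $z + yg$ with $y \ne 0$ would give $q = \p[z]/y^{2} \in \p[Z]$), the unique $\g$-maximal lattice in $W$ is $M = N + \g\pi^{-[\n/2]}g$. Solving $\p(x, h) = 0$ for $x = n + ae + bf \in L$ gives $a = -bq\pi^{-2m}$, and since $0 \le m \le [\n/2]$ this reduces to $b \in \g$, so $L \cap W = N + \g\pi^{-m}g \subset M$ with elementary divisors $(\g,\ldots,\g,\,\pe^{[\n/2]-m})$; applying (1) yields $\pe^{[\n/2]-m}$.

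For (3), I would use the ansatz $f_{0} = f + \alpha e + \beta z$ and impose the Witt conditions. The constraint $\p(f_{0},\, h) = 0$ gives $\beta = -\pi^{m}/(2q)$, then $\p[f_{0}] = 0$ forces $\alpha = -\beta^{2}q = -\pi^{2m}/(4q)$, while $2\p(e,\, f_{0}) = 1$ is automatic since $\p[e] = \p(e,\, z) = 0$. Since $Z \cap (Fz)^{\perp} \subset Z$ is orthogonal to both $e$ and to $f_{0} \in Ff + Fe + Fz$, and the dimensions sum to $t+1 = \dim W$, we obtain the claimed Witt decomposition.

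The substantive computation is (4). Writing $x = \sum c_{i}k_{i} + ae + bf \in L$, the equation $\p(x, h) = 0$ becomes $\p(n, z) + b\pi^{m}/2 = 0$ where $n = \sum c_{i}k_{i}$. From $\varepsilon_{i}k_{i} \in N_{0} \subset (Fz)^{\perp}$ one has $\p(k_{i}, z) = 0$ for $i < t$, and expanding $q = \p[z] = \sum a_{j}\p(z, k_{j}) = a_{t}\p(z, k_{t})$ yields $\p(k_{t}, z) = q/a_{t}$. The constraint becomes $b = -2c_{t}\pi^{-m}q/a_{t}$, and integrality of $b$ forces $c_{t} \in \g \cap a_{t}\pi^{m}(2q)^{-1}\g = \pe^{\lambda'}$ with $\lambda' = \max(0,\, \ord(a_{t}) + m - \ord(2q))$. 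A $\g$-basis of $L \cap W$ is then $\{k_{1},\ldots,k_{t-1},\, e,\, \pi^{\lambda'}k_{t} + b_{0}f\}$ where $b_{0} = -2\pi^{\lambda'-m}q/a_{t}$. Using \REF{f0} to substitute $f = f_{0} + \pi^{2m}(4q)^{-1}e + \pi^{m}(2q)^{-1}z$, I express this basis in the $\g$-basis $\{\varepsilon_{1}k_{1},\ldots,\varepsilon_{t-1}k_{t-1},\, e,\, f_{0}\}$ of $M$: the $k_{t}$-coefficient of the last vector cancels exactly, leaving a block-triangular matrix with determinant $(\varepsilon_{1}\cdots\varepsilon_{t-1})^{-1}b_{0}$. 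This generates $(\varepsilon_{1}\cdots\varepsilon_{t-1})^{-1}\pe^{\lambda' + \mu}$ with $\mu = \ord(2q) - m - \ord(a_{t})$, and since $\lambda = \max(0,\mu)$ while $\lambda' = \max(0,-\mu)$ satisfy $\lambda' + \mu = \lambda$, the final ideal is $(\varepsilon_{1}\cdots\varepsilon_{t-1})^{-1}\pe^{\lambda}$ by \LEM{lid}(1). The sub-case $t=1$ is simply the specialization in which the block of $\varepsilon_{i}$'s is empty.

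The main obstacle is that $L \cap W$ is not a sublattice of $M$ in general --- after rewriting $bf$ via $f_{0}$, the $e$-coefficient $b\pi^{2m}/(4q)$ need not be integral when $q$ is not a unit. Thus (1) does not apply directly in (4), and the computation must proceed by extracting the determinant of an explicit change-of-basis matrix; the single nontrivial integrality condition on $c_{t}$ (governed by the ideal $\g \cap a_{t}\pi^{m}(2q)^{-1}\g$) is precisely what produces the ideal $\pe^{\lambda}$ in the final formula, and keeping track of its interaction with the two maximal-lattice conventions is the delicate part of the bookkeeping.
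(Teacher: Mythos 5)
Your proposal is correct and follows essentially the same route as the paper: explicit orthogonal vectors $g$ and $f_{0}$, identification of $L \cap W$ by the single integrality condition coming from $\p(x,\, h) = 0$, and extraction of $[M/L \cap W]$ as the determinant of a triangular change-of-basis matrix via \LEM{lid}(1). The only cosmetic difference is that in (4) you parametrize $L \cap W$ in the basis of $L$ and then convert to the basis of $M$, whereas the paper writes the membership condition directly in $M$-coordinates; the resulting matrix and determinant $(\varepsilon_{1} \cdots \varepsilon_{t-1})^{-1}\pe^{\lambda}$ agree.
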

\begin{proof}
(1) By \LEM{lid}(1) we have 
$[M/L \cap W] = \det(\mathrm{diag}[\varepsilon_{1} \cdots \varepsilon_{t+1}])\g$ 
with a suitable $\g$-basis of $M$. 

(2) Taking $g = \pi^{m}f - q\pi^{-m}e$, 
we have $W = Z \oplus Fg$, 
which is anisotropic because $q \not\in \p[Z]$. 
Then $M = N + \g\pi^{-[\nu/2]}g$ is a $\g$-maximal lattice in $W$ with respect to $\psi$. 
For $x = z + x_{1}g \in W$ with $z \in Z$ and $x_{1} \in F$, 
we see that $x \in L$ if and only if $z \in N$ and $x_{1} \in \mathfrak{p}^{-m}$. 
Hence $L \cap W = N + \g \pi^{-m}g$. 
Then we have $[M/L \cap W] = \pi^{[\nu/2]-m}\g = \mathfrak{p}^{[\nu/2]-m}$. 

(3) Put $U = Z \cap (Fz)^{\perp}$ with a fixed $z \in N[q]$. 
Clearly this is an anisotropic space of dimension $t-1$. 
Since $\p(e,\, h) = \p(f-(2q)^{-1}\pi^{m}z,\, h) = 0$, 
we have $W = U \oplus (Fe + F(f-(2q)^{-1}\pi^{m}z))$. 
Then $f_{0} = f - (4q)^{-1}\pi^{2m}e - (2q)^{-1}\pi^{m}z$ belongs to $U^{\perp} \cap W$ 
and satisfies $\p[f_{0}] = 0$ and $\p(e,\, f_{0}) = 2^{-1}$. 
Thus $U^{\perp} \cap W = Fe + Ff_{0}$, 
which gives a Witt decomposition of $W$. 

(4) We first note that $N_{0} = N \cap (Fz)^{\perp}$ 
because $U$ in the proof of (3) is anisotropic. 
Also, 
$a_{t} \ne 0$ as $z \not\in U$. 
For $x = \sum_{i=1}^{t-1} x_{i}\varepsilon_{i}k_{i} + x_{t}e + x_{t+1}f_{0} \in W$ with $x_{i} \in F$, 
we see that 
\begin{eqnarray*}
x \in L &\Longleftrightarrow& 
	\begin{cases}
	\sum_{i=1}^{t-1} (x_{i}\varepsilon_{i} - (2q)^{-1}\pi^{m}x_{t+1}a_{i})k_{i} 
		- (2q)^{-1}\pi^{m}x_{t+1}a_{t}k_{t} \in N, & \\
	(x_{t} - (4q)^{-1}\pi^{2m}x_{t+1})e + x_{t+1}f \in \g e + \g f & \\
	\end{cases} \\ 
&\Longleftrightarrow& 
	[x_{1},\, \cdots ,\, x_{t+1}] \in \g^{1}_{t+1}\alpha_{0}, 
\end{eqnarray*}
where $\alpha_{0}$ is a triangular matrix in $GL_{t+1}(F)$ given by 
\begin{gather*}
\begin{bmatrix}
   		\varepsilon_{1}^{-1} &  &  &  &  \\ 
   		0 & \ddots &  &  &  \\ 
   		\vdots & \ddots & \varepsilon_{t-1}^{-1} &  &  \\ 
   		0 & \cdots & 0 & 1 &  \\ 
   		\pi^{m+\lambda}a_{1}(2q\varepsilon_{1})^{-1} & \cdots & 
			\pi^{m+\lambda}a_{t-1}(2q\varepsilon_{t-1})^{-1} & 
			\pi^{2m+\lambda}(4q)^{-1} & \pi^{\lambda} \\ 
\end{bmatrix}.
\end{gather*} 
Thus the automorphism $\alpha$ of $W$, 
represented by $\alpha_{0}$ with respect to the basis $\{\varepsilon_{i}k_{i},\, e,\, f_{0}\}$, 
gives a surjection of $M$ onto $L \cap W$. 
Therefore by \LEM{lid}(1) we find that 
$[M/L \cap W] = \det(\alpha_{0})\g = (\varepsilon_{1} \cdots \varepsilon_{t-1})^{-1}\mathfrak{p}^{\lambda}$. 
Similarly for $t = 1$, 
we can find an automorphism of $W$ represented by 
$\begin{bmatrix}
   		1 & 0 \\ 
		\pi^{2m+\lambda}(4q)^{-1} & \pi^{\lambda} \\
\end{bmatrix}
$ 
with respect to the basis $\{e,\, f_{0}\}$, 
and it gives the desired ideal $[M/L \cap W]$. 
\end{proof}

In the next section we denote by $F(\sqrt{\delta})$ the discriminant field of $\p$; 
we may assume that $\delta \in \g^{\times} \cup \pi\g^{\times}$ by a suitable base change of $V$. 
Also we will consider the discriminant field $K$ of $\p$ or $\psi$ 
according to the core dimension $t$. 
Hereafter, 
we put $D_{K/F} = \pe^{d}$ if $K$ is a ramified extension of $F$, 
and put $d = 1$ if $K$ is an unramified quadratic extension of $F$ for convenience. 
Put also $2\g = \pe^{\kappa}$, 
which may not be confused with $\kappa$ as the norm form of $K$. 

Let us recall some facts on quadratic fields of $F$, 
which will be often used in our argument: 
Assume $2 \in \pe$ and let $b \in \g^{\times}$. 
Then by applying \cite[Lemma\ 3.5(2)\ and\ (3)]{99b}, 
we see that 
\begin{eqnarray}
b &\in& (1 + \pi^{2\kappa}\g^{\times})\g^{\times 2} \quad \text{if $\xi(b) = -1$}, \label{35u} \\
b &\in& (1 + \pi^{2k+1}\g^{\times})\g^{\times 2}\quad \text{and} \nonumber \\
b &\not\in& (1 + \pe^{\varepsilon})\g^{\times 2}\quad \text{for every $\varepsilon > 2k+1$ if $\xi(b) = 0$} 
\label{35r}
\end{eqnarray}
with some $0 \le k < \kappa$. 
Moreover by \cite[Lemma\ 3.5(5)]{99b}, 
$D_{F(\sqrt{b})/F} = \pe^{2(\kappa-k)}$ if $\xi(b) = 0$. 

When $K/F$ is a ramified quadratic extension and $2 \in \pe$, 
from local class field theory we also recall that 
\begin{gather}
[F^{\times}:N_{K/F}(K^{\times})] = [\g^{\times}:N_{K/F}(\mr^{\times})] = 2, \nonumber \\ 
\g^{\times} = (1 + \pe^{d-1})N_{K/F}(\mr^{\times}), \label{cd}
\end{gather}
where $\mr$ is the maximal order in $K$. 

\section{Proof of \THM{t2} in Case(3)}

\subsection{Case $t = 1$}

Put $K = F(\sqrt{\delta q})$ and $\xi = \xi(\delta q)$. 
We first observe that $q \in \p[Z]$ if and only if $K = F$. 
By \cite[Theorem 3.5(ii)]{Y}, 
we can take $h$ and $\tau(q)$ in Case(3) as follows: 
\begin{gather*}
h = 
\begin{cases}
z + \pi^{m}e & \text{if $q \in \p[Z]$}, \\ 
z + \pi^{-m}q(1-s)e + \pi^{m}f & 
	\text{if $q \not\in \p[Z]$, $2 \in \pe$, and $\ord(\delta q)$ is even}, \\ 
q\pi^{-m}e + \pi^{m}f & \text{otherwise}, 
\end{cases} \\ 
\tau(q) = 
\begin{cases}
\kappa + (\nu+\ord(\delta))/2 & \text{if $q \in \p[Z]$}, \\ 
\kappa + [(\nu+1-d)/2] & 
	\text{if $q \not\in \p[Z]$, $2 \in \pe$, and $\ord(\delta q)$ is even}, \\ 
[\nu/2] & \text{otherwise}. 
\end{cases}
\end{gather*}

Suppose that $q \in \p[Z]$. 
Then $K = F$ and $h = z + \pi^{m}e \in N + \g e$ 
with $0 \le m \le \kappa + (\nu+\ord(\delta))/2$ and a fixed element $z$ of $N[q]$. 
By \LEM{l2}(3), 
$W = Fe + Ff_{0}$ is a Witt decomposition, 
and so $M = \g e + \g f_{0}$ is $\g$-maximal in $W$. 
Since $L = \g\pi^{-[\nu/2]}z + \g e + \g f$, 
applying \LEM{l2}(4) to $k_{1} = \pi^{-[\nu/2]}z$, 
we have $[M/L \cap W] = 2q\pi^{-m-[\nu/2]}\g = \pe^{(\nu+\ord(\delta))/2+\kappa-m}$. 
\\

Suppose that $q \not\in \p[Z]$. 
Then $K \ne F$ and $W$ is anisotropic of dimension $2$. 
If $2 \not\in \mathfrak{p}$ or if $\ord(\delta q)$ is odd, 
then $h = q\pi^{-m}e + \pi^{m}f \in \g e + \g f$ with $0 \le m \le [\nu/2]$. 
Since $q \not\in \p[Z]$, 
we have $[M/L \cap W] = \mathfrak{p}^{[\nu/2]-m}$ by \LEM{l2}(2). 

If $2 \in \mathfrak{p}$ and $\ord(\delta q)$ is even, 
take $g \in Z$ so that $\p[g] = \delta$. 
Since $\delta q\pi^{-\ord(\delta q)} \in \g^{\times}$ and $K \ne F$, 
by applying \REF{35u} or \REF{35r} according as whether or not $K$ is unramified over $F$, 
we may put $\delta q\pi^{-\ord(\delta q)} = sb^{2}$ 
with $s \in 1 + \pi^{2\kappa+1-d}\g^{\times}$ and $b \in \g^{\times}$. 
Then $K = F(\sqrt{s})$ and $z = \delta^{-1}bs\pi^{[(\nu+1)/2]}g \in \g g$, 
which satisfies $\p[z] = sq$. 
For such $s \in \g^{\times}$ and $z \in N[sq]$, 
we can take $h = z + \pi^{-m}q(1-s)e + \pi^{m}f \in N + \g e + \g f$ 
with $0 \le m \le \kappa + [(\nu+1-d)/2]$. 
Since $Z = Fz$ and $\p[z] = sq \in \pi^{\nu}\g^{\times}$, 
we have $L = \g\pi^{-[\nu/2]}z + \g e + \g f$. 
Put $g_{1} = \pi^{-2m}q(s-1)e + f$ and $g_{2} = e - (2sq)^{-1}\pi^{m}z$; 
then $W = Fg_{1} + Fg_{2}$. 
For $x = x_{1}g_{1} + x_{2}g_{2} \in W$ with $x_{i} \in F$, 
we see that $x \in L$ if and only if $x_{1} \in \g$ and $x_{2} \in \pe^{[(\nu+1)/2]+\kappa-m}$, 
and hence $L \cap W = \g g_{1} + \g \pi^{[(\nu+1)/2]+\kappa-m}g_{2}$. 
Observing the matrix representing $\p$ with respect to this basis, 
we find that $[(L \cap W)\widetilde{\,}/L \cap W] = \pe^{2([(\nu+1)/2]+\kappa-m)}$. 
On the other hand, 
since $Q(\p) = M_{2}(F)$, 
the invariants of $W$ are $\{2,\, K,\, \{K,\, \delta\}\}$ by \THM{t1}. 
Thus \THM{di} gives 
\begin{gather*}
[\widetilde{M}/M] = 
\begin{cases}
\g &  \text{if $\xi = -1$ and $\delta \in \kappa[K^{\times}]$}, \\
\pe^{2} &  \text{if $\xi = -1$ and $\delta \not\in \kappa[K^{\times}]$}, \\
\pe^{d} &  \text{if $\xi = 0$}. 
\end{cases}
\end{gather*}
Combining these with \REF{id2}, 
we have 
\begin{gather*}
[M/L \cap W] = 
\begin{cases}
\pe^{[(\nu+1)/2]+\kappa-m} &  \text{if $\xi = -1$, $\delta \in \g^{\times}$, and $\nu$ is even}, \\
\pe^{[(\nu+1)/2]+\kappa-1-m} &  \text{if $\xi = -1$, $\delta \in \pi\g^{\times}$, and $\nu$ is odd}, \\
\pe^{[(\nu+1)/2]+\kappa-d/2-m} &  \text{if $\xi = 0$}. 
\end{cases}
\end{gather*}

\subsection{Case $t = 2$}

Put $K = F(\sqrt{\delta})$ and $\xi = \xi(\delta)$. 
Then $K \ne F$ and $(Z,\, \p)$ can be identified with $(K,\, c\kappa)$ with some $c \in F^{\times}$. 
The isomorphism class of $(Z,\, \p)$ is determined by $K$ and $cN_{K/F}(K^{\times})$. 
Hence we can take such a $c$ from $\g^{\times} \cup \pi\g^{\times}$ if $K$ is unramified over $F$, 
and from $\g^{\times}$ if $K$ is ramified over $F$. 
Then $N$ is the maximal order $\mr$ in $K$ and $L = \mr + \g e + \g f$. 
Notice that 
$q \in \p[Z]$ if and only if $q\kappa[K^{\times}] = c\kappa[K^{\times}]$. 
By \cite[Theorem 3.5(iii)]{Y}, 
we can take $h$ and $\tau(q)$ in Case(3) as follows: 
\begin{gather*}
h = 
\begin{cases}
z + \pi^{m}e & \text{if $q \in \p[Z]$}, \\ 
z + \pi^{-m}q(1-s)e + \pi^{m}f & \text{if $q \not\in \p[Z]$, $2 \in \pe$, and $\xi = 0$}, \\ 
q\pi^{-m}e + \pi^{m}f & \text{otherwise}, 
\end{cases} \\ 
\tau(q) = 
\begin{cases}
[(\nu+d)/2] & \text{if $q \in \p[Z]$}, \\ 
[(\nu+d-1)/2] & \text{if $q \not\in \p[Z]$}.  
\end{cases}
\end{gather*}

Suppose that $q \in \p[Z]$. 
Then $h = z + \pi^{m}e \in N + \g e$ 
with $0 \le m \le [(\nu+d)/2]$ and a fixed element $z$ of $N[q]$. 
To take an explicit basis of $(Fz)^{\perp}$, 
we are going to solve the equation $c\kappa[z] = q$ with suitably chosen $c$. 
For such an element $z$ we set $N_{0} = (Fz)^{\perp} \cap N$, 
which is a $\g$-maximal lattice in the anisotropic space $(Fz)^{\perp} \cap Z$ of dimension $1$. 

Assume $\delta \in \pi \g^{\times}$. 
Then $K$ is ramified over $F$, 
$\mr = \g[\sqrt{\delta}]$, 
and $d = 2\kappa + 1$. 
Since $\kappa[\sqrt{\delta}] = -\delta$ is a prime element of $F$, 
put $q = (-\delta)^{\nu}q_{0}$ with $q_{0} \in \g^{\times}$. 
Because $c\kappa[K^{\times}] = q_{0}\kappa[K^{\times}]$ with $c$ above, 
we may take $q_{0}$ as $c$; 
namely, 
we can identify $(Z,\, \p)$ with $(K,\, q_{0}\kappa)$. 
Then $N = \mr$ and $z = \sqrt{\delta}^{\nu}$ satisfies $\p[z] = q_{0}\kappa[z] = q$, 
which is the required element. 
Thus $(Fz)^{\perp} \cap K$ is $F\sqrt{\delta}$ or $F$ 
according as $\nu$ is even or odd. 
If $\nu$ is even, 
then $N_{0} = \g \sqrt{\delta}$ is a $\g$-maximal lattice in $F\sqrt{\delta}$. 
Now, 
$\mr = \g \sqrt{\delta} + \g \supset N_{0} = \g \sqrt{\delta}$ and $z = \delta^{\nu/2}$. 
Hence by \LEM{l2}(4) we have 
$[M/L \cap W] = \g \cap 2q\pi^{-m}\delta^{-\nu/2}\g = \pe^{\nu/2+\kappa-m}$.  
If $\nu$ is odd, 
then $N_{0} = \g$. 
Since $\mr = \g + \g \sqrt{\delta} \supset N_{0} = \g$ and $z = \delta^{(\nu-1)/2}\sqrt{\delta}$, 
by \LEM{l2}(4) we have 
$[M/L \cap W] = \g \cap 2q\pi^{-m}\delta^{-(\nu-1)/2}\g = \pe^{(\nu+1)/2+\kappa-m}$.  

Assume that $\delta \in \g^{\times}$ and $2 \not\in \pe$. 
Then $K$ is unramified over $F$ and $\mr = \g[\sqrt{\delta}]$. 
Since $q\kappa[K^{\times}] = c\kappa[K^{\times}]$, 
it can be seen that 
$\nu$ is even if and only if $c \in \kappa[K^{\times}]$. 
Thus, 
if $q = \pi^{\nu}q_{0}$ with $q_{0} \in \g^{\times}$, 
we may take $c = q_{0}$ or $c = \pi q_{0}$ according as $\nu$ is even or odd. 
In this situation, 
$z = \pi^{[\nu/2]}$ satisfies $\p[z] = q$, 
so that $(Fz)^{\perp} \cap K = F\sqrt{\delta}$ and 
$N_{0} = \g \sqrt{\delta}$ is $\g$-maximal in $F\sqrt{\delta}$. 
From this we see that 
$[M/L \cap W] = \g \cap 2q\pi^{-m}\pi^{-[\nu/2]}\g = \pe^{[(\nu+1)/2]-m}$.  

Assume that $\delta \in \g^{\times}$, $2 \in \pe$, and $K$ is unramified over $F$. 
Applying \REF{35u} to $\delta$, 
we have $\delta \in (1 + \pi^{2\kappa}\g^{\times})\g^{\times 2}$. 
Put $\delta = (1 + \pi^{2\kappa}a)b^{-2}$ with $a,\, b \in \g^{\times}$, 
and also put $u = 2^{-1}(1 + b\sqrt{\delta}) \in \mr^{\times}$. 
Then we see that $\mr = \g[u]$. 
By the same argument as in the case $\delta \in \g^{\times}$ and $2 \not\in \pe$, 
we may put $c = q_{0}$ or $c = \pi q_{0}$ according as $\nu$ is even or odd. 
Then $z = \pi^{[\nu/2]} \in N[q]$, 
so that $(Fz)^{\perp} \cap K = F\sqrt{\delta}$ and $N_{0} = \g \sqrt{\delta}$. 
These $N_{0}$ and $N = \mr$ can be written as 
\begin{gather*}
N_{0} = \g
\begin{bmatrix}
   		1 & 0  \\
\end{bmatrix}
\begin{bmatrix}
   		1 & -2 \\ 
		0 & 1 \\
\end{bmatrix}
\begin{bmatrix}
   		1 \\
		u \\
\end{bmatrix}
,\qquad 
\mr = \g^{1}_{2}
\begin{bmatrix}
   		1 & -2 \\ 
		0 & 1 \\ 
\end{bmatrix}
\begin{bmatrix}
   		1 \\
		u \\  
\end{bmatrix}.
\end{gather*}
We have then $z = -b\pi^{[\nu/2]}\sqrt{\delta} + 2\pi^{[\nu/2]}u$, 
and hence $[M/L \cap W] = \g \cap q\pi^{-m-[\nu/2]}\g = \pe^{[(\nu+1)/2]-m}$.  

Assume that $\delta \in \g^{\times}$, $2 \in \pe$, and $K$ is ramified over $F$. 
Applying \REF{35r} to $\delta$, 
we have $\delta \in (1 + \pi^{2k+1}\g^{\times})\g^{\times 2}$ and $D_{K/F} = \pe^{2(\kappa-k)}$ 
with some $0 \le k < \kappa$. 
Put $\delta = (1 + \pi^{2k+1}a)b^{-2}$ with $a,\, b \in \g^{\times}$; 
put also $u = \pi^{-k}(1 + b\sqrt{\delta}) \in \mr$. 
This is a prime element of $K$ because $\kappa[u] = -\pi a$ is so of $F$. 
Hence $\mr = \g[u]$. 
Let $q = (-\pi a)^{\nu}q_{0}$ with $q_{0} \in \g^{\times}$. 
Since $c\kappa[K^{\times}] = q\kappa[K^{\times}]$ and $-\pi a \in \kappa[K^{\times}]$, 
we may put $c = q_{0}$. 
If $\nu$ is even, 
then $z = (-\pi a)^{\nu/2}$ belongs to $N[q]$, 
so that $(Fz)^{\perp} \cap K = F\sqrt{\delta}$ and 
$N_{0} = \g \sqrt{\delta}$ is $\g$-maximal in $F\sqrt{\delta}$. 
With a matrix 
$
\begin{bmatrix}
   		1 & -\pi^{k} \\ 
		0 & 1 \\
\end{bmatrix}
$ 
in $GL_{2}(\g)$, 
we have $\mr = \g \sqrt{\delta} + \g u$ and 
$z = -b(-\pi a)^{\nu/2}\sqrt{\delta} + \pi^{k}(-\pi a)^{\nu/2}u$. 
Thus \LEM{l2}(4) gives $[M/L \cap W] = 2q\pi^{-m-k-\nu/2}\g = \pe^{(\nu+d)/2-m}$. 
If $\nu$ is odd, 
then $z = (-\pi a)^{(\nu-1)/2}u \in N[q]$, 
so that $(Fz)^{\perp} \cap K = F(b\delta + \sqrt{\delta})$ and 
$N_{0} = \g \pi^{-k}(b\delta + \sqrt{\delta})$. 
With a matrix 
$
\begin{bmatrix}
   		\pi^{k+1}a & 1 \\ 
		1 & 0  
\end{bmatrix}
$ 
in $GL_{2}(\g)$, 
we have $\mr = \g \pi^{-k}(b\delta + \sqrt{\delta}) + \g$ and 
$z = (-\pi a)^{(\nu-1)/2}b\pi^{-k}(b\delta + \sqrt{\delta}) + (-a)^{(\nu+1)/2}\pi^{(\nu+1)/2+k}$. 
Thus \LEM{l2}(4) gives $[M/L \cap W] = 2q\pi^{-m-k-(\nu+1)/2}\g = \pe^{(\nu+d-1)/2-m}$. 
This finishes the proof in the case $q \in \p[Z]$. 
\\

Next suppose that $q \not\in \p[Z]$. 
Then $W$ is an anisotropic space of dimension $3$. 
If $2 \not\in \pe$ or $\xi = -1$, 
then $h = q\pi^{-m}e + \pi^{m}f \in \g e + \g f$ with $0 \le m \le [\nu/2]$. 
Since $q \not\in \p[Z]$, 
we have $[M/L \cap W] = \mathfrak{p}^{[\nu/2]-m}$ by \LEM{l2}(2). 

Assume that $2 \in \pe$ and $\xi = 0$. 
Let $\pi_{0}$ be a prime element of $F$ such that $\pi_{0} \in \kappa[K^{\times}]$, 
and put $q = \pi_{0}^{\nu}q_{0}$ with $q_{0} \in \g^{\times}$. 
Because $q \in q_{0} \kappa[K^{\times}]$ and 
the isomorphism class of $(Z,\, \p)$ is determined by $K$ and $c\kappa[K^{\times}]$, 
we may take $sq_{0}$ as such a $c$ for a fixed $s \in 1 + \pe^{d-1}$ so that $s \not\in \kappa[\mr^{\times}]$. 
Notice that equality \REF{cd} guarantees the existence of such an element $s$. 
Then we can identify $(Z,\, \p)$ with $(K,\, sq_{0}\kappa)$ and $N = \mr$. 
Let $\kappa[u] = \pi_{0}$ with $u \in \mr$, 
and put 
\begin{gather}
z = 
\begin{cases}
\pi_{0}^{\nu/2} & \text{if $\nu$ is even}, \\
\pi_{0}^{(\nu-1)/2}u & \text{if $\nu$ is odd}. 
\end{cases} \label{z1}
\end{gather}
Then we have $\mr = \g[u]$ and $\p[z] = sq$. 
In this situation \cite[Theorem 3.5]{Y} allows us to take 
$h = z + \pi^{-m}q(1-s)e + \pi^{m}f \in N + \g e + \g f$ 
with $0 \le m \le [(\nu+d-1)/2]$. 
It can be seen that 
\begin{gather}
W = U \oplus \left\{F\left(\frac{sq-q}{\pi^{2m}}e + f\right) + F\left(e - \frac{\pi^{m}}{2sq}z\right)\right\}, 
\quad U = (Fz)^{\perp} \cap K. 
\end{gather}
Put $g_{2} = \pi^{-2m}(sq-q)e + f$ and $g_{3} = e - (2sq)^{-1}\pi^{m}z$. 
Since $W$ is anisotropic, 
we need to construct a $\g$-maximal lattice $M$ in $W$, 
which will be done by employing the discriminant ideal. 
Once such an $M$ is obtained, 
we consider the analogy of \LEM{l2}(4). 
By \THM{di} the discriminant ideal of $W$ is given as follows: 
\begin{gather} 
[\widetilde{M}/M] = 
\begin{cases}
\pe^{\kappa + 1} & \text{if $\delta \in \pi \g^{\times}$ and $\nu$ is even, 
	or if $\delta \in \g^{\times}$ and $\nu$ is odd}, \\ 
\pe^{\kappa + 2} & \text{otherwise}. 
\end{cases} \label{m1}
\end{gather}

If $\delta \in \pi \g^{\times}$, 
we take $u$ of \REF{z1} to be $\sqrt{\delta}$ and $\pi_{0}$ to be $-\delta$. 
Then $U = Fg_{1}$ with $g_{1} = \sqrt{\delta}$ or $g_{1} = 1$ 
according as $\nu$ is even or odd. 
We set 
\begin{gather}
M = \g g_{1} + \g \pi^{-\mu_{2}}g_{2} + \g \pi^{-\mu_{3}}g_{3}, 
\end{gather}
where $\mu_{2} = [\ord(\p[g_{2}])/2]$ and $\mu_{3} = [\ord(\p[g_{3}])/2]$. 
This is a $\g$-lattice in $W$ such that $\p[M] \subset \g$. 
Observing the matrix representing $\p$ with respect to that $\g$-basis, 
we see that $[\widetilde{M}/M] = \pe^{\kappa+1}$ if $\nu$ is even 
and $[\widetilde{M}/M] = \pe^{\kappa+2}$ if $\nu$ is odd. 
Thus by \REF{m1} and \LEM{lid}(6), 
$M$ is $\g$-maximal in $W$. 
Since $\g g_{1} \subset \mr$, 
we can find a $\g$-basis $\{k_{1},\, k_{2}\}$ of $\mr$ such that 
$\g g_{1} = \g \varepsilon_{1}k_{1}$ with nonzero integer $\varepsilon_{1}$. 
Put $z = a_{1}k_{1} + a_{2}k_{2}$ with $a_{i} \in \g$; 
notice that $a_{2} \ne 0$. 
On the other hand, 
we have $\g e + \g f = \g g_{2} + \g e$. 
Then for 
$x = x_{1}\varepsilon_{1}k_{1} + x_{2}\pi^{-\mu_{2}}g_{2} + x_{3}\pi^{-\mu_{3}}g_{3} \in W$ 
with $x_{i} \in F$, 
we see that 
\begin{eqnarray*}
x \in L &\Longleftrightarrow& 
	\begin{cases}
	(x_{1}\varepsilon_{1} - (2sq\pi^{\mu_{3}})^{-1}\pi^{m}x_{3}a_{1})k_{1} 
		- (2sq\pi^{\mu_{3}})^{-1}\pi^{m}x_{3}a_{2}k_{2} \in \mr, & \\
	x_{2}\pi^{-\mu_{2}}g_{2} + x_{3}\pi^{-\mu_{3}}e \in \g g_{2} + \g e & \\
	\end{cases} \\ 
&\Longleftrightarrow& 
	[x_{1},\, x_{2},\, x_{3}] \in \g^{1}_{3}\alpha_{0}, 
\end{eqnarray*}
where $\alpha_{0}$ is the matrix in $GL_{3}(F)$ given by 
\begin{gather}
\begin{bmatrix}
   		\varepsilon_{1}^{-1} & 0 & 0 \\ 
   		0 & \pi^{\mu_{2}} & 0 \\ 
   		\pi^{m+\lambda}a_{1}(2sq\pi^{\mu_{3}}\varepsilon_{1})^{-1} & 0 & \pi^{\lambda} \\
\end{bmatrix},
\qquad \pe^{\lambda} = \frac{2sq\pi^{\mu_{3}}}{\pi^{m}a_{2}}\g \cap \pi^{\mu_{3}}\g. 
\end{gather} 
Thus the automorphism $\alpha$ of $W$, 
represented by $\alpha_{0}$ with respect to the basis 
$\{\varepsilon_{1}k_{1},\, \pi^{-\mu_{2}}g_{2},\, \pi^{-\mu_{3}}g_{3}\}$, 
gives a surjection of $M$ onto $L \cap W$. 
Therefore by \LEM{lid}(1) we find that 
\begin{gather}
[M/L \cap W] = \det(\alpha_{0})\g = \frac{\pi^{\mu_{2}}}{\varepsilon_{1}}\mathfrak{p}^{\lambda}. 
\label{i1}
\end{gather}
Hence our task is to find $\varepsilon_{1},\, a_{2}$, and $\pe^{\lambda}$. 
If $\nu$ is even, 
then we have 
\begin{gather*}
\mr = \g \sqrt{\delta} + \g,\quad \g g_{1} = \g \sqrt{\delta},\quad z = (-\delta)^{\nu/2}. 
\end{gather*}
Thus \REF{i1} shows that 
$[M/L \cap W] = \pi^{\mu_{2}}(\pe^{\nu/2+\kappa+\mu_{3}-m} \cap \pe^{\mu_{3}}) = \pe^{\nu/2+\kappa-m}$. 
If $\nu$ is odd, 
then 
\begin{gather*}
\mr = \g + \g \sqrt{\delta},\quad \g g_{1} = \g,\quad z = (-\delta)^{(\nu-1)/2}\sqrt{\delta}. 
\end{gather*}
Hence \REF{i1} gives 
$[M/L \cap W] = \pi^{\mu_{2}}(\pe^{(\nu+1)/2+\kappa+\mu_{3}-m} \cap \pe^{\mu_{3}}) = \pe^{(\nu-1)/2+\kappa-m}$. 

If $\delta \in \g^{\times}$, 
we take $u$ and $\pi_{0}$ of \REF{z1} to be $\pi^{-k}(1 + b\sqrt{\delta})$ and $-\pi a$, 
respectively. 
Here $a,\, b \in \g^{\times}$ and $0 \le k < \kappa$ are as in the case 
that $q \in \p[Z]$, $\delta \in \g^{\times}$, $2 \in \pe$, and $K$ is ramified over $F$. 
Then $(Fz)^{\perp} \cap K = Fg_{1}$ with $g_{1} = \sqrt{\delta}$ or $g_{1} = \pi^{-k}(b\delta + \sqrt{\delta})$ 
according as $\nu$ is even or odd. 
We set 
\begin{gather}
M = \g g_{1} + \g \pi^{-\mu_{2}}g_{2} + \g g, \quad 
g = \frac{1}{\pi^{k^{\prime}}}g_{1} + 
	\frac{2sq_{0}a^{[(\nu+1)/2]}}{b\pi^{\kappa+k^{\prime}+\mu_{3}}}g_{3}, 
\end{gather}
where $\mu_{i} = [\ord(\p[g_{i}])/2]$, 
$k^{\prime} = k$ if $\nu$ is even, 
and $k^{\prime} = k + 1$ if $\nu$ is odd. 
This is a $\g$-lattice in $W$ such that $\p[M] \subset \g$. 
Observing the matrix representing $\p$ with respect to that $\g$-basis, 
we see that $[\widetilde{M}/M] = \pe^{\kappa+2}$ if $\nu$ is even 
and $[\widetilde{M}/M] = \pe^{\kappa+1}$ if $\nu$ is odd. 
Thus $M$ is $\g$-maximal in $W$ by \REF{m1}. 
Since $\g g_{1} \subset \mr$, 
we can find a $\g$-basis $\{k_{1},\, k_{2}\}$ of $\mr$ such that 
$\g g_{1} = \g \varepsilon_{1}k_{1}$ with nonzero integer $\varepsilon_{1}$. 
Put $z = a_{1}k_{1} + a_{2}k_{2}$ with $a_{i} \in \g$. 
In the same way as in the case $\delta \in \pi\g^{\times}$ above, 
we obtain a surjection $\alpha$ of $M$ onto $L \cap W$, 
which is represented, 
with respect to the basis 
$\{\varepsilon_{1}k_{1},\, \pi^{-\mu_{2}}g_{2},\, g\}$, 
by the matrix 
\begin{gather}
\begin{bmatrix}
   		\varepsilon_{1}^{-1} & 0 & 0 \\ 
   		0 & \pi^{\mu_{2}} & 0 \\ 
   		\pi^{\lambda}c\varepsilon_{1}^{-1} & 0 & \pi^{\lambda} \\
\end{bmatrix},\qquad 
\pe^{\lambda} = \frac{2q\pi^{k^{\prime}+\mu_{3}}}{\pi^{m}a_{2}}\g \cap \pi^{k^{\prime}+\mu_{3}}\g 
\end{gather} 
with some $c \in F$. 
Thus we have 
\begin{gather}
[M/L \cap W] = \det(\alpha)\g = \frac{\pi^{\mu_{2}}}{\varepsilon_{1}}\mathfrak{p}^{\lambda}. 
\label{i2}
\end{gather}
If $\nu$ is even, 
then we have 
\begin{gather*}
\mr = \g \sqrt{\delta} + \g u,\quad \g g_{1} = \g \sqrt{\delta},\quad 
z = -(-\pi a)^{\nu/2}b\sqrt{\delta} + (-\pi a)^{\nu/2+k}u. 
\end{gather*}
We note that $c = -\pi^{-k}(1 + (-1)^{\nu/2}) \in \g$. 
Thus \REF{i2} shows that 
$[M/L \cap W] = \pi^{\mu_{2}}\g = \pe^{[(\nu+d-1)/2]-m}$. 
If $\nu$ is odd, 
then we have 
\begin{gather*}
\mr = \g g_{1} + \g,\quad z = (-\pi a)^{(\nu-1)/2}bg_{1} + (-a)^{(\nu+1)/2}\pi^{(\nu+1)/2+k}. 
\end{gather*}
We note that $c = -\pi^{-k-1}(1 + (-1)^{(\nu-1)/2}) \in \g$. 
Hence \REF{i2} gives 
$[M/L \cap W] = \pi^{\mu_{2}}\g = \pe^{[(\nu+d-1)/2]-m}$. 
This completes the proof in the case $q \not\in \p[Z]$. 

\subsection{Case $t = 3$}

Put $B = Q(\p)$. 
Then $B$ is a division algebra and 
$(Z,\, \p)$ can be identified with $(B^{\circ},\, -\delta \beta^{\circ})$. 
As noted in {\S}1.1, 
$B$ can be given by $\{J,\, c\}$ with a quadratic extension $J$ of $F$ 
and an element $c$ of $F^{\times}$ such that $c \not\in N_{J/F}(J^{\times})$. 
Put $K = F(\sqrt{\delta q})$, 
$\xi = \xi(\delta q)$, 
and $\varepsilon = \pi^{-\nu}q \in \g^{\times}$. 
Let $\mr$ be the maximal order in $K$. 
Notice that 
$q \in \p[Z]$ if and only if $K \ne F$. 
By \cite[Theorem 3.5(iv)]{Y}, 
we can take $h$ and $\tau(q)$ in Case(3) as follows: 
\begin{gather*}
h = 
\begin{cases}
z + \pi^{m}e & \text{if $q \in \p[Z]$}, \\ 
z + \pi^{-m}q(1-s)e + \pi^{m}f & \text{if $q \not\in \p[Z]$ and $2 \in \pe$}, \\ 
q\pi^{-m}e + \pi^{m}f & \text{if $q \not\in \p[Z]$ and $2 \not\in \pe$}, 
\end{cases}  
\end{gather*}
$\tau(q) =$ 
\begin{gather*}
\begin{cases}
\kappa + [\nu/2] & \text{if $q \not\in \p[Z]$}, \\ 
(\ord(\delta^{-1}q)+1)/2 & \text{if $q \in \p[Z]$ and $\ord(\delta q)$ is odd}, \\ 
\kappa + 1 + (\ord(\delta^{-1}q)-d)/2 & 
	\text{if $q \in \p[Z]$, $\ord(\delta q)$ is even, and $\xi = 0$}, \\ 
\kappa + \ord(\delta q)/2 & \text{otherwise}.  
\end{cases}
\end{gather*}

Suppose that $q \in \p[Z]$. 
Then $h = z + \pi^{m}e \in N + \g e$ 
with $0 \le m \le \tau(q)$ and a fixed element $z$ of $N[q]$. 
To take a basis of $(Fz)^{\perp}$, 
we solve the equation $-\delta \beta^{\circ}[z] = q$ by embedding $K$ into $B$. 
Namely, 
choosing $c \in F^{\times}$ so that $c \not\in \kappa[K^{\times}]$, 
we have $B = \{K,\, c\}$ as noted above. 
We may assume that $c = \pi$ if $\xi = -1$ and $c \in \g^{\times}$ if $\xi = 0$. 
Then $z = \sqrt{\delta^{-1}q}$ belongs to $K \cap B^{\circ}$ and satisfies $\p[z] = q$. 
Hence we see that 
\begin{gather*}
B^{\circ} = F\sqrt{\delta q} \oplus K\omega, \qquad 
W = K\omega \oplus (Fe + Ff_{0}) 
\end{gather*}
with $f_{0}$ in \REF{f0} and $\omega \in B$ in \REF{om} such that $\omega^{2} = c$. 
We note that $K\omega$ is an anisotropic space of dimension $2$ and 
isomorphic to $(K,\, \delta c\kappa)$ by $x\omega \mapsto x$. 
Set $N_{0} = K\omega \cap N$, 
which is a $\g$-maximal lattice in $K\omega$. 
We are going to find $\g$-bases of $N$ and $N_{0}$. 
In view of the uniqueness of $N$ and $N_{0}$, 
it is sufficient to construct a $\g$-maximal lattice in $B^{\circ}$ and also in $K\omega$, 
which will be done by employing the discriminant ideals of $\p$ and $\psi$.  

Assume that $\delta \in \g^{\times}$ and $\nu$ is odd. 
Also assume $2 \in \pe$. 
Then $K = F(\sqrt{\delta \varepsilon \pi})$ is ramified over $F$ and $d = 2\kappa + 1$; 
put $u = \sqrt{\delta \varepsilon \pi}$. 
The discriminant ideal of $\p$ (resp.\ $\psi$) is $\pe^{\kappa+2}$ (resp.\ $\pe^{2\kappa+1}$) by \THM{di}. 
To obtain $N$ in $B^{\circ}$, 
we take 
\begin{gather}
c \in 1 + \pe^{d-1}\quad \text{such that}\quad c \not\in \kappa[\mr^{\times}], \label{c}
\end{gather}
and identify $B$ with $\{K,\, c\}$. 
Equality \REF{cd} guarantees the existence of such an element $c$. 
Then $g = \pi^{-\kappa}(u + u\omega)$ belongs to $B^{\circ}$ and satisfies $\p[g] \in \pi\g^{\times}$. 
Set $N = \g u + \g \omega + \g g$. 
This is a $\g$-lattice in $B^{\circ}$ such that $\p[N] \subset \g$. 
Observing the matrix representing $\p$ with respect to that $\g$-basis, 
we see that $[\widetilde{N}/N] = \pe^{\kappa+2}$. 
Thus $N$ is $\g$-maximal in $B^{\circ}$ by \LEM{lid}(6). 
To obtain $N_{0}$ in $K\omega$, 
we consider the maximal order $\mr = \g[u]$ in $K$. 
Because $\delta c \in \g^{\times}$, 
this is $\g$-maximal with respect to $\delta c\kappa$; 
hence $N_{0} = \g \omega + \g u\omega$ is maximal in $K\omega$ with respect to $-\delta \beta^{\circ}$. 
(In fact, we have $[\widetilde{N_{0}}/N_{0}] = \pe^{2\kappa+1}$.) 
To apply \LEM{l2}(4), 
we take a $\g$-basis $\{k_{i}\}_{i=1}^{3}$ of $N$ determined by 
\begin{gather*}
\begin{bmatrix}
   	k_{1} \\ 
   	k_{2} \\ 
   	k_{3} \\
\end{bmatrix}
= 
\begin{bmatrix}
   	1 & 0 & \pi^{\kappa} \\ 
   	0 & 1 & 0 \\ 
   	0 & 0 & 1 \\
\end{bmatrix}^{-1}
\begin{bmatrix}
   	u \\ 
   	\omega \\ 
   	g \\
\end{bmatrix}.
\end{gather*} 
Then $N_{0} = \g k_{1} + \g k_{2}$ and 
$z = \delta^{-1}\pi^{(\nu-1)/2}(k_{1} + \pi^{\kappa}k_{3})$. 
Hence we have 
$[M/L \cap W] = \g \cap \pi^{-(\nu-1)/2-\kappa-m}2q\g = \pe^{(\nu+1)/2-m}$. 
If $2 \not\in \pe$, 
then $N = \g u + \g \omega + \g u\omega$ and $N_{0} = \g \omega + \g u\omega$ 
are both $\g$-maximal with $u = \sqrt{\delta \varepsilon \pi}$, 
and so $[M/L \cap W] = \pe^{(\nu+1)/2-m}$. 

Assume that $\delta \in \pi\g^{\times}$ and $\nu$ is even. 
Also assume $2 \in \pe$. 
Then $K = F(\sqrt{\delta_{1} \varepsilon \pi})$ is ramified over $F$ and $d = 2\kappa + 1$, 
where $\delta_{1} = \pi^{-1}\delta \in \g^{\times}$; 
put $u = \sqrt{\delta_{1} \varepsilon \pi}$. 
The discriminant ideal of $\p$ (resp.\ $\psi$) is $\pe^{\kappa+1}$ (resp.\ $\pe^{2\kappa+1}$) by \THM{di}. 
To obtain $N$ in $B^{\circ}$, 
we take $c$ as in \REF{c} and identify $B$ with $\{K,\, c\}$. 
Set $g = \pi^{-\kappa}(u^{-1} + u^{-1}\omega)$ and $N = \g u^{-1} + \g \omega + \g g$, 
which is a $\g$-lattice in $B^{\circ}$ such that $\p[N] \subset \g$. 
Observing the matrix representing $\p$ with respect to the $\g$-basis, 
we have $[\widetilde{N}/N] = \pe^{\kappa+1}$, 
so that $N$ is $\g$-maximal in $B^{\circ}$. 
We further set $N_{0} = \g \omega + \g u^{-1}\omega$. 
Since $[\widetilde{N_{0}}/N_{0}] = \pe^{2\kappa+1}$, 
this is $\g$-maximal in $K\omega$. 
In a similar way to the case $\delta \in \g^{\times}$ and $\nu$ is odd, 
we obtain a $\g$-basis $\{k_{i}\}_{i=1}^{3}$ of $N$ such that 
$N_{0} = \g k_{1} + \g k_{2}$ and 
$z = \varepsilon \pi^{\nu/2}(k_{1} + \pi^{\kappa}k_{3})$. 
Hence we have 
$[M/L \cap W] = \g \cap \pi^{-\nu/2-\kappa-m}2q\g = \pe^{\nu/2-m}$. 
If $2 \not\in \pe$, 
then $N = \g u^{-1} + \g \omega + \g u^{-1}\omega$ and 
$N_{0} = \g \omega + \g u^{-1}\omega$ are $\g$-maximal with $u = \sqrt{\delta_{1} \varepsilon \pi}$, 
and so we have $[M/L \cap W] = \pe^{\nu/2-m}$. 
This completes the proof of the case where $q \in \p[Z]$ and $\ord(\delta q)$ is odd. 

Assume that $\delta \in \g^{\times}$, $\nu$ is even, and $K$ is ramified over $F$. 
Then $2 \in \pe$ and 
the discriminant ideal of $\p$ (resp.\ $\psi$) is $\pe^{\kappa+2}$ (resp.\ $\pe^{d}$) by \THM{di}.  
Applying \REF{35r} to $\delta \varepsilon \in \g^{\times}$, 
we have $\delta \varepsilon \in (1 + \pi^{2k+1}\g^{\times})\g^{\times 2}$ and $D_{K/F} = \pe^{2(\kappa-k)}$ 
with some $0 \le k < \kappa$. 
Hence we may put $\delta \varepsilon = (1 + \pi^{2k+1}a)b^{-2}$ with $a,\, b \in \g^{\times}$. 
Since $u = \pi^{-k}(1 + b\sqrt{\delta \varepsilon})$ is a prime element of $K$, 
$\g[u]$ is the maximal order $\mr$ in $K$. 
To obtain $N$ in $B^{\circ}$, 
we take $c$ as in \REF{c} and identify $B$ with $\{K,\, c\}$. 
Then we find $g = 2^{-1}\pi^{k+1}\sqrt{\delta \varepsilon}(1 + \omega)$ 
in $B^{\circ}$ such that $\p[g] \in \pi\g^{\times}$. 
Set $N = \g \sqrt{\delta \varepsilon} + \g u\omega + \g g$ in $B^{\circ}$. 
This satisfies $\p[N] \subset \g$ and $[\widetilde{N}/N] = \pe^{\kappa+2}$. 
Hence $N$ is $\g$-maximal in $B^{\circ}$. 
Further since $\delta c \in \g^{\times}$, 
$\mr$ is $\g$-maximal in $K$ with respect to $\delta c\kappa$, 
and hence $N_{0} = \mr \omega$ is maximal in $K\omega$ with respect to $-\delta \beta^{\circ}$. 
To apply \LEM{l2}(4), 
we find a $\g$-basis $\{k_{i}\}_{i=1}^{3}$ of $N$ determined by 
\begin{gather*}
\begin{bmatrix}
   	k_{1} \\ 
   	k_{2} \\ 
   	k_{3} \\
\end{bmatrix}
= 
\begin{bmatrix}
   	1 & -b^{-1}\pi^{k} & 2\pi^{-k-1} \\ 
   	0 & 1 & 0 \\ 
   	0 & 0 & 1 \\
\end{bmatrix}^{-1}
\begin{bmatrix}
   	\sqrt{\delta \varepsilon} \\ 
   	u\omega \\ 
   	g \\
\end{bmatrix}.
\end{gather*} 
Then $N_{0} = \g k_{1} + \g k_{2}$ and 
$z = \delta^{-1}\pi^{\nu/2}(k_{1} - b^{-1}\pi^{\kappa}k_{2} + 2\pi^{-k-1}k_{3})$. 
Hence we obtain 
$[M/L \cap W] = \g \cap \pi^{-\nu/2-\kappa+k+1-m}2q\g = \pe^{\nu/2+k+1-m}$. 

Assume that $\delta \in \pi\g^{\times}$, $\nu$ is odd, and $K$ is ramified over $F$. 
Then $2 \in \pe$. 
A similar argument gives $\g$-bases of $N$ and $N_{0}$ as follows: 
\begin{gather*}
N = \g \sqrt{\delta_{1} \varepsilon} + \g u^{-1}\omega + 
	\g 2^{-1}\pi^{k}\sqrt{\delta_{1} \varepsilon}(1 + \omega), \quad 
N_{0} = \g \omega + \g u^{-1}\omega, 
\end{gather*}
where $\delta_{1} = \pi^{-1}\delta \in \g^{\times}$, 
$\delta_{1} \varepsilon = (1 + \pi^{2k+1}a)b^{-2}$ with $a,\, b \in \g^{\times}$, 
$0 \le k < \kappa$, 
and $u = \pi^{-k}(1 + b\sqrt{\delta_{1} \varepsilon})$. 
The expression for $N_{0}$ can be verified from $[\widetilde{N_{0}}/N_{0}] = \pe^{2\kappa-2k}$. 
Then we find a basis $\{k_{i}\}_{i=1}^{3}$ of $N$ determined by 
\begin{gather*}
\begin{bmatrix}
   	k_{1} \\ 
   	k_{2} \\ 
   	k_{3} \\
\end{bmatrix}
= 
\begin{bmatrix}
   	1 & -b^{-1}a\pi^{k+1} & 2\pi^{-k} \\ 
   	0 & 1 & 0 \\ 
   	0 & 0 & 1 \\
\end{bmatrix}^{-1}
\begin{bmatrix}
   	\sqrt{\delta_{1} \varepsilon} \\ 
   	u^{-1}\omega \\ 
   	2^{-1}\pi^{k}\sqrt{\delta_{1} \varepsilon}(1 + \omega) \\
\end{bmatrix}.
\end{gather*}
Observe that $N_{0} = \g k_{1} + \g k_{2}$. 
Since the coefficient of $z$ associated with $k_{3}$ is $\delta_{1}^{-1}2\pi^{(\nu-1)/2-k}$, 
\LEM{l2}(4) shows $[M/L \cap W] = \pe^{(\nu+1)/2+k-m}$. 
This finishes the proof in the case where $q \in \p[Z]$, $\ord(\delta q)$ is even, and $\xi = 0$. 

Assume that $K$ is unramified over $F$. 
Then $B = \{K,\, \pi\}$ and $\ord(\delta q)$ is even. 
Put $v = \sqrt{\delta \varepsilon}$ if $\delta \in \g^{\times}$ and 
$v = \sqrt{\delta_{1} \varepsilon}$ if $\delta = \pi \delta_{1} \in \pi\g^{\times}$. 
Clearly $K = F(v)$. 
Further if $2 \in \pe$, 
we may put $v^{2} = (1 + \pi^{2\kappa}a)b^{-2}$ with $a,\, b \in \g^{\times}$ 
by applying \REF{35u} to $v^{2} \in \g^{\times}$. 
Then we see that $\mr = \g[v]$ if $2 \not\in \pe$ and $\mr = \g[u]$ if $2 \in \pe$, 
where $u = \pi^{-\kappa}(1 + bv)$. 
Since $B$ is a division algebra and $K$ is unramified over $F$, 
the maximal lattice in $B^{\circ}$ with respect to $-\delta \beta^{\circ}$ can be given by 
\begin{gather}
N = 
\begin{cases}
\g v + \mr \omega & \text{if $\delta \in \g^{\times}$}, \\
\g v + \mr \omega^{-1} & \text{if $\delta \in \pi\g^{\times}$}. 
\end{cases} \label{un}
\end{gather}
This can be found in the proof of \cite[Theorem 6.2]{cq}. 
Since $Q(\psi) = B$ if $\delta \in \g^{\times}$ by \THM{t1}, 
the discriminant ideal of $\psi$ is $\pe^{2}$ or $\g$ 
according as $\delta \in \g^{\times}$ or $\delta \in \pi\g^{\times}$ by \THM{di}. 
If $\delta \in \g^{\times}$, 
then $\mr$ is $\g$-maximal in $K$ with respect to $\delta \pi \kappa$, 
because of $\delta \pi \in \pi \g^{\times}$. 
Hence $N_{0} = \mr \omega$ is $\g$-maximal in $K\omega$ with respect to $-\delta \beta^{\circ}$. 
From this together with \REF{un} and $z = \delta^{-1}\pi^{\nu/2}v$, 
we see that $[M/L \cap W] = \pe^{\nu/2+\kappa-m}$. 
If $\delta \in \pi\g^{\times}$, 
we have $N_{0} = \g \omega^{-1} + \g v\omega^{-1}$ if $2 \not\in \pe$ 
and $N_{0} = \g \omega^{-1} + \g u\omega^{-1}$ if $2 \in \pe$. 
This is because $\p[N_{0}] \subset \g$ and $[\widetilde{N_{0}}/N_{0}] = \g$. 
Combining this with \REF{un} and $z = \delta^{-1}\pi^{(\nu+1)/2}v$, 
we obtain $[M/L \cap W] = \pe^{(\nu+1)/2+\kappa-m}$. 
This completes the proof in the case $q \in \p[Z]$. 
\\

Next suppose that $q \not\in \p[Z]$. 
Then $W$ is an anisotropic space of dimension $4$. 
If $2 \not\in \pe$, 
then $h = q\pi^{-m}e + \pi^{m}f \in \g e + \g f$ with $0 \le m \le [\nu/2]$. 
Since $q \not\in \p[Z]$, 
we have $[M/L \cap W] = \mathfrak{p}^{[\nu/2]-m}$ by \LEM{l2}(2). 

Assume that $2 \in \pe$. 
Since $q \not\in \p[Z]$, 
$\delta q$ is the square of an element of $F^{\times}$. 
Fix an unit element $s$ so that 
\begin{gather}
s \in 1 + \pi^{2\kappa}\g^{\times},\qquad s \not\in \g^{\times 2}. \label{s}
\end{gather}
\cite[Lemma 3.2(1)]{99b} guarantees the existence of such an element $s$. 
Set $J = F(\sqrt{s})$ and $u = \pi^{-\kappa}(1 + \sqrt{s})$. 
Then $J$ is an unramified quadratic extension of $F$ 
and $\g[u]$ is the maximal order $\mr_{1}$ in $J$. 
We may identify $B$ with $\{J,\, \pi\}$. 
Then in the same way as in \REF{un} we see that 
\begin{gather*}
B^{\circ} = F\sqrt{s} \oplus J\omega, \qquad \omega^{2} = \pi, \\
N = 
\begin{cases}
\g \sqrt{s} + \mr_{1} \omega & \text{if $\delta \in \g^{\times}$}, \\
\g \sqrt{s} + \mr_{1} \omega^{-1} & \text{if $\delta \in \pi\g^{\times}$}. 
\end{cases} 
\end{gather*}
Now, 
let $\delta^{-1}q = b^{2}$ with $b \in F^{\times}$ and put $z = b\sqrt{s}$ in $B^{\circ}$; 
then $b \in \g$ and $\p[z] = -\delta N_{J/F}(b\sqrt{s}) = sq$, 
so that $z \in N[sq]$. 
In this situation \cite[Theorem 3.5]{Y} allows us to take 
$h = z + \pi^{-m}q(1-s)e + \pi^{m}f \in N + \g e + \g f$ 
with $0 \le m \le [\nu/2] + \kappa$. 
It can be seen that 
\begin{gather*}
W = J\omega \oplus 
	\left\{F\left(\frac{sq-q}{\pi^{2m}}e + f\right) + F\left(e - \frac{\pi^{m}}{2sq}z\right)\right\}. 
\end{gather*}
Put $g_{3} = \pi^{-2m}(sq-q)e + f$ and $g_{4} = e - (2sq)^{-1}\pi^{m}z$. 
Then $\g e + \g f = \g g_{3} + \g e$ as observed in {\S}3.2. 
Since the discriminant ideal of $W$ is $\pe^{2}$ by \THM{di}, 
we then find a $\g$-maximal lattice in $W$ given by 
\begin{gather*}
M = 
\begin{cases}
\mr_{1} \omega + \g \pi^{-[\mu/2]}g_{3} + \g \pi^{-[-\mu/2]}g_{4} & \text{if $\delta \in \g^{\times}$}, \\
\mr_{1} \omega^{-1} + \g \pi^{-[\mu/2]}g_{3} + \g \pi^{-[-\mu/2]}g_{4} & \text{if $\delta \in \pi\g^{\times}$}, 
\end{cases} 
\end{gather*}
where $\mu = \nu+2\kappa-2m$. 
Put $g_{1} = \omega$ and $g_{2} = u\omega$ if $\delta \in \g^{\times}$, 
and put $g_{1} = \omega^{-1}$ and $g_{2} = u\omega^{-1}$ if $\delta \in \pi\g^{\times}$. 
Then for $x = x_{1}g_{1} + x_{2}g_{2} + x_{3}\pi^{-[\mu/2]}g_{3} + x_{4}\pi^{-[-\mu/2]}g_{4} \in W$, 
$x$ belongs to $L$ if and only if 
$[x_{1}\, \cdots\, x_{4}] \in \g^{1}_{4}\mathrm{diag}[1,\, 1,\, \pi^{[\mu/2]},\, 1]$. 
This shows that 
$L \cap W = \mr_{1}g_{1} + \g g_{3} + \g \pi^{-[-\mu/2]}g_{4}$. 
Therefore we obtain 
$[M/L \cap W] = \mathfrak{p}^{[\nu/2]+\kappa-m}$. 
This completes the proof in the case $q \not\in \p[Z]$, 
and hence $t = 3$. 

\subsection{Case $t = 4$}

Since $F(\sqrt{\delta}) = F$, 
the discriminant field $K$ of $\psi$ is $F(\sqrt{-q})$. 
Put $\varepsilon = \pi^{-\nu}q \in \g^{\times}$ and $B = Q(\p)$. 
Then $B$ is a division algebra and $(Z,\, \p)$ can be identified with $(B,\, \beta)$. 
Because $\beta[B^{\times}] = F^{\times}$, 
there exists an element $z$ in $N$ such that $\beta[z] = q$ for any $0 \ne q \in \g$. 
For such a $z$ we can take $h = z + \pi^{m}e$ with $0 \le m \le [(\nu+1)/2]$ 
by \cite[Theorem 3.5(v)]{Y}. 
To take an explicit $z \in N[q]$, 
we solve the equation $\beta[z] = q$ by embedding $K$ into $B$ if $K \ne F$ 
and by identifying $(B,\, \beta)$ with $(B,\, -\beta)$ if $K = F$. 
\\

Suppose that $K \ne F$. 
Put $\xi = \xi(-q)$ and 
$v = \sqrt{-\varepsilon}$ or $v = \sqrt{-\pi\varepsilon}$ 
according as $\nu$ is even or odd. 
Let $c$ be an element of $\g^{\times}$ such that $c \not\in \kappa[K^{\times}]$ if $\xi = 0$, 
and $c = \pi$ if $\xi = -1$. 
Then $B = \{K,\, c\}$. 
Take $z = \sqrt{-q}$ in $K$, 
which satisfies $\beta[z] = \kappa[z] = q$, 
so that $z \in N[q]$. 
Hence we have 
\begin{gather*}
B = K \oplus K\omega, \qquad 
W = (F \oplus K\omega) \oplus (Fe + Ff_{0}) 
\end{gather*}
with $f_{0}$ in \REF{f0} and $\omega$ in \REF{om} such that $\omega^{2} = c$. 
We note that $F \oplus K\omega$ is an anisotropic space of dimension $3$ and 
isomorphic to $(B^{\circ},\, -v^{2}\beta^{\circ})$ by $-xv \mapsto x$. 
Set $N_{0} = (F \oplus K\omega) \cap N$, 
which is a $\g$-maximal lattice in $F \oplus K\omega$. 
We are going to find $\g$-bases of $N$ and $N_{0}$, 
namely, 
to construct a $\g$-maximal lattice in $B$ and also in $F \oplus K\omega$ 
by employing the discriminant ideals of $\p$ and $\psi$.
\THM{di} shows that 
\begin{gather*} 
[\widetilde{N}/N] = \pe^{2}, \qquad 
[\widetilde{N_{0}}/N_{0}] = 
\begin{cases}
\pe^{\kappa + 2} & \text{if $\nu$ is even}, \\ 
\pe^{\kappa + 1} & \text{if $\nu$ is odd}. 
\end{cases} 
\end{gather*}

Assume that $\nu$ is even and $K$ is ramified over $F$. 
Then $2 \in \pe$. 
By \REF{35r} we can put $-\varepsilon = (1 + \pi^{2k+1}a)b^{-2}$ and $D_{K/F} = \pe^{2(\kappa-k)}$ 
with some $0 \le k < \kappa$ and $a,\, b \in \g^{\times}$; 
the maximal order $\mr$ of $K$ is given by $\g[u]$, 
where $u = \pi^{-k}(1 + bv)$. 
Further, 
we may take $c \in 1 + \pe^{d-1}$ such that $c \not\in \kappa[\mr^{\times}]$ by \REF{cd}. 
Then there are four elements of $B$ given by 
\begin{gather}
g_{1} = 1,\quad 
g_{2} = u\omega = \pi^{-k}(1 + bv)\omega, \nonumber \\
g_{3} = \pi^{k+1-\kappa}(1 + \omega),\quad 
g_{4} = \pi^{-\kappa}(1 + bv + \omega + bv\omega). 
\end{gather}
We set $N = \sum_{i=1}^{4} \g g_{i}$. 
This is a $\g$-lattice in $B$ such that $\p[N] \subset \g$ and $[\widetilde{N}/N] = \pe^{2}$. 
Hence $N$ is $\g$-maximal. 
Moreover we have $(F \oplus K\omega) \cap N = \sum_{i=1}^{3} \g g_{i}$, 
which is the required lattice $N_{0}$ in $F \oplus K\omega$. 
Some calculations show $z = -b^{-1}\pi^{\nu/2}(g_{1} + \pi^{k}g_{2} - \pi^{\kappa}g_{4})$. 
Thus \LEM{l2}(4) gives $[M/L \cap W] = \mathfrak{p}^{\nu/2-m}$. 

Assume that $\nu$ is odd. 
Then $K$ is ramified over $F$ and $D_{K/F} = \pe^{2\kappa+1}$. 
If $2 \in \pe$, 
we take $c$ as in the above case and identify $B$ with $\{K,\, c\}$. 
It can be verified that 
$N = \g + \g v + \g \pi^{-\kappa}(1 + \omega) + \g \pi^{-\kappa}v(1 + \omega)$ and 
$N_{0} = \g + \g \pi^{-\kappa}(1 + \omega) + \g v\omega$ are $\g$-maximal in $B$ and $F \oplus K\omega$, 
respectively. 
Taking a basis of $N$ given by 
\begin{gather*}
\begin{bmatrix}
   	k_{1} \\ 
   	k_{2} \\ 
   	k_{3} \\
  	k_{4} \\
\end{bmatrix}
= 
\begin{bmatrix}
   	1 & 0 & 0 & 0 \\ 
   	0 & 1 & 0 & 0 \\ 
   	0 & 0 & -1 & \pi^{\kappa} \\
   	0 & 0 & 0 & 1 \\
\end{bmatrix}^{-1}
\begin{bmatrix}
   	1 \\ 
   	\pi^{-\kappa}(1 + \omega) \\ 
   	v \\ 
   	\pi^{-\kappa}v(1 + \omega) \\
\end{bmatrix},
\end{gather*}
we have $N_{0} = \sum_{i=1}^{3} \g k_{i}$ and $z = \pi^{[\nu/2]}(-k_{3} + \pi^{\kappa}k_{4})$. 
Hence we obtain $[M/L \cap W] = \mathfrak{p}^{(\nu+1)/2-m}$. 
If $2 \not\in \pe$, 
it can easily be seen that 
$N = \g + \g \omega + \g v\omega + \g v$ and $N_{0} = \g + \g \omega + \g v\omega$ 
are respectively $\g$-maximal in $B$ and $F \oplus K\omega$, 
so that we have $[M/L \cap W] = \mathfrak{p}^{(\nu+1)/2-m}$. 

Assume that $\nu$ is even and $K$ is unramified over $F$. 
Let $\mr$ be the maximal order of $K$. 
Because $B$ is a division algebra and $K$ is unramified over $F$, 
the maximal order $N$ in $B$ can be given by 
\begin{gather}
B = K \oplus K\omega,\quad 
N = \mr + \mr \omega,\quad 
\omega^{2} = \pi. \label{un2}
\end{gather}
(See\ \cite[Theorem\ 5.13]{04},\ for example.) 
Then we have $N_{0} = (F \oplus K\omega) \cap N = \g + \mr \omega$. 
If $2 \not\in \pe$, 
then $\mr = \g[v]$. 
Since $z = \pi^{\nu/2}v$, 
we have $[M/L \cap W] = \mathfrak{p}^{\nu/2-m}$. 
If $2 \in \pe$, 
we may put $-\varepsilon = 1 + \pi^{2\kappa}a$ with $a \in \g^{\times}$ by \REF{35u}. 
Then $\mr = \g[u]$, 
where $u = \pi^{-\kappa}(1 + v)$. 
Since $z = \pi^{\nu/2}(\pi^{\kappa}u - 1)$, 
we obtain $[M/L \cap W] = \mathfrak{p}^{\nu/2-m}$. 
\\

Suppose that $K = F$, 
that is, 
$-q \in F^{\times 2}$. 
We may identify $(Z,\, \p)$ with $(B,\, -\beta)$. 
Further, 
we can take a division algebra $B = \{J,\, \pi\}$ 
with an unramified quadratic extension $J = F(\sqrt{s})$ of $F$. 
Here $s$ is a fixed unit element; 
if $2 \in \pe$, 
we choose $s$ as in \REF{s} and put $u = \pi^{-\kappa}(1 + \sqrt{s})$. 
Let $\mr_{1}$ be the maximal order in $J$. 
Then since $N$ coincides with the maximal order in $B$, 
we have $N = \mr_{1} + \mr_{1}\omega$ with $\omega^{2} = \pi$ by the same reason as in \REF{un2}. 
Now take $z = \sqrt{-q} \in F$, 
which satisfies $\p[z] = -\beta[z] = q$. 
Then $(Fz)^{\perp} \cap B = F\sqrt{s} \oplus J\omega = B^{\circ}$ and $\psi = -\beta^{\circ}$. 
Since $(Fz)^{\perp} \cap N$ is the $\g$-maximal lattice $N_{0}$ in $(Fz)^{\perp} \cap B$, 
we have $N_{0} = \g \sqrt{s} + \mr_{1}\omega$. 
If $2 \in \pe$, 
taking a basis of $N$ given by 
\begin{gather*}
\begin{bmatrix}
   	k_{1} \\ 
   	k_{2} \\ 
   	k_{3} \\
  	k_{4} \\
\end{bmatrix}
= 
\begin{bmatrix}
   	1 & 0 & 0 & 0 \\ 
   	0 & 1 & 0 & 0 \\ 
   	0 & 0 & -1 & \pi^{\kappa} \\
   	0 & 0 & 0 & 1 \\
\end{bmatrix}^{-1}
\begin{bmatrix}
   	\omega \\ 
   	u\omega \\ 
   	1 \\ 
   	u \\
\end{bmatrix},
\end{gather*}
we have $N_{0} = \sum_{i=1}^{3} \g k_{i}$ and $z = \sqrt{-q}(-k_{3} + \pi^{\kappa}k_{4})$. 
Hence we obtain $[M/L \cap W] = \mathfrak{p}^{\nu/2-m}$. 
If $2 \not\in \pe$, 
then $\mr_{1} = \g[\sqrt{s}]$, 
and thus \LEM{l2}(4) gives 
$[M/L \cap W] = \pi^{-m}z^{-1}2q\g = \mathfrak{p}^{\nu/2-m}$. 
This finishes the proof in the case $t = 4$. 
\\

In all cases needed in the proof so far, 
we can find a nonnegative integer $\lambda$ such that 
$[M/L \cap W] = \pe^{\lambda-m}$ if $2\p(h,\, L) = \pe^{m}$. 
That $\lambda$ is determined by the isomorphism class of $(V,\, \p)$ and $q$. 
This completes the proof of \THM{t2}. 
\\

Let us here state $\tau(q)$ of \cite[Theorem 3.5]{Y} in the case $r = 0$, 
which was treated in Case(2) of {\S}2.2: 

$\tau(q) =$ 
\begin{gather}
\begin{cases}
\kappa + (\nu+\ord(\delta))/2 & \text{if $t = 1$}, \\ 
[(\nu+d)/2] & \text{if $t = 2$}, \\ 
(\ord(\delta^{-1}q)+1)/2 
	& \text{if $t = 3$ and $\ord(\delta q)\not\in2\mathbf{Z}$}, \\ 
\kappa + 1 + (\ord(\delta^{-1}q)-d)/2 
	& \text{if $t = 3$, $\ord(\delta q)\in2\mathbf{Z}$, and $\xi(\delta q) = 0$}, \\ 
\kappa + \ord(\delta q)/2 
	& \text{if $t = 3$, $\ord(\delta q)\in2\mathbf{Z}$, and $\xi(\delta q) = -1$}, \\ 
[(\nu+1)/2] & \text{if $t = 4$}. 
\end{cases} \label{corid}
\end{gather}
Clearly this coincides with $\lambda(q)$ in the case $r = 0$, 
that is, 
in the anisotropic case. 
That is true even if $r > 0$ and $q \in \p[Z]$. 
We note that \REF{corid} can be derived by using explicit solution of $\p[z] = q$ and 
basis of $\g$-maximal lattice $N$ in the case $q \in \p[Z]$ of the proof in Case (3). 
\\

To prove the corollary below, 
we need the discriminant ideals of both anisotropic spaces and 
its orthogonal complement determined by $0 \ne q \in \g$. 
Those ideals can be derived by applying \THM{di} to each of the cases 
that treated in the proof of \THM{t2} under the assumption $q \in \p[Z]$. 
In this way $[\widetilde{L}/L]$ and $[\widetilde{M}/M]$ in question are given as follows: 
\begin{gather}
[\widetilde{L}/L] = 
\begin{cases}
\pe^{2\kappa +1} & \text{if $t = 2$, $\delta \in \pi\g^{\times}$, and $\nu \in 2\mathbf{Z}$,} \\ 
 & \text{or $t = 2$, $\delta \in \pi\g^{\times}$, and $\nu \not\in 2\mathbf{Z}$,} \\ 
\pe^{d} & \text{if $t = 2$, $\delta \in \g^{\times}$, $\xi(\delta) = 0$, and $\nu \in 2\mathbf{Z}$,} \\ 
 & \text{or $t = 2$, $\delta \in \g^{\times}$, $\xi(\delta) = 0$, and $\nu \not\in 2\mathbf{Z}$,} \\ 
\g & \text{if $t = 2$, $\delta \in \g^{\times}$, $\xi(\delta) = -1$, and $\nu \in 2\mathbf{Z}$,} \\ 
\pe^{2} & \text{if $t = 2$, $\delta \in \g^{\times}$, $\xi(\delta) = -1$, and $\nu \not\in 2\mathbf{Z}$,} \\ 
\pe^{\kappa +1} & \text{if $t = 3$, $\delta \in \pi\g^{\times}$, and $\nu \in 2\mathbf{Z}$,} \\ 
 & \text{or $t = 3$, $\delta \in \pi\g^{\times}$, $\xi(\delta q) = 0$, and $\nu \not\in 2\mathbf{Z}$,} \\ 
 & \text{or $t = 3$, $\delta \in \pi\g^{\times}$, $\xi(\delta q) = -1$, and $\nu \not\in 2\mathbf{Z}$,} \\ 
\pe^{\kappa +2} & \text{if $t = 3$, $\delta \in \g^{\times}$, and $\nu \not\in 2\mathbf{Z}$,} \\ 
 & \text{or $t = 3$, $\delta \in \g^{\times}$, $\xi(\delta q) = 0$, and $\nu \in 2\mathbf{Z}$,} \\ 
 & \text{or $t = 3$, $\delta \in \g^{\times}$, $\xi(\delta q) = -1$, and $\nu \in 2\mathbf{Z}$,} \\ 
\pe^{2} & \text{if $t = 4$.} 
\end{cases} \label{dl}
\end{gather} 
\begin{gather} 
[\widetilde{M}/M] = 
\begin{cases}
\pe^{\kappa} & \text{if $t = 2$, $\delta \in \pi\g^{\times}$, and $\nu \not\in 2\mathbf{Z}$,} \\ 
 & \text{or $t = 2$, $\delta \in \g^{\times}$, $\xi(\delta) = 0$, and $\nu \in 2\mathbf{Z}$,} \\ 
 & \text{or $t = 2$, $\delta \in \g^{\times}$, $\xi(\delta) = -1$, and $\nu \in 2\mathbf{Z}$,} \\ 
\pe^{\kappa +1} & \text{if $t = 2$, $\delta \in \pi\g^{\times}$, and $\nu \in 2\mathbf{Z}$,} \\ 
 & \text{or $t = 2$, $\delta \in \g^{\times}$, $\xi(\delta) = 0$, and $\nu \not\in 2\mathbf{Z}$,} \\ 
 & \text{or $t = 2$, $\delta \in \g^{\times}$, $\xi(\delta) = -1$, and $\nu \not\in 2\mathbf{Z}$,} \\ 
\pe^{2\kappa +1} & \text{if $t = 3$ and $\ord(\delta q) \not\in 2\mathbf{Z}$,} \\ 
\pe^{d} & \text{if $t = 3$, $\ord(\delta q) \in 2\mathbf{Z}$, and $\xi(\delta q) = 0$,} \\ 
\g & \text{if $t = 3$, $\delta \in \pi\g^{\times}$, $\xi(\delta q) = -1$, and $\nu \not\in 2\mathbf{Z}$,} \\ 
\pe^{2} & \text{if $t = 3$, $\delta \in \g^{\times}$, $\xi(\delta q) = -1$, and $\nu \in 2\mathbf{Z}$,} \\ 
\pe^{\kappa +1} & \text{if $t = 4$ and $\nu \not\in 2\mathbf{Z}$,} \\ 
\pe^{\kappa +2} & \text{if $t = 4$ and $\nu \in 2\mathbf{Z}$.} 
\end{cases} \label{dm}
\end{gather}

\begin{cor} \label{co1}
Let the notation and assumption be the same as in \THM{t2}. 
Then the ideal $\mathfrak{p}^{\lambda(q)}$ satisfies the following properties: 
\begin{enumerate}
\item $\lambda(q)$ is determined by the isomorphism class of $(V,\, \p)$ and $q$. 
\item $\lambda(q)$ coincides with $\tau(q)$ given in \REF{re0}; 
namely, 
$\lambda(q) = \mathrm{Max} \{0 \le m \in \mathbf{Z} \mid L[q,\, 2^{-1}\mathfrak{p}^{m}] \ne \emptyset\}$. 
\item $L \cap W$ is $\g$-maximal in $W$ if and only if $2\p(h,\, L) = \mathfrak{p}^{\lambda(q)}$. 
\item $2q[\widetilde{L}/L] = \mathfrak{p}^{2\lambda(q)}[\widetilde{M}/M]$. 
In particular, 
$2q[\widetilde{L}/L][\widetilde{M}/M]^{-1}$ 
is a square ideal in $F$ contained in $\g$ whenever $L[q] \ne \emptyset$. 
\end{enumerate}
\end{cor}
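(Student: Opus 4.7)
The plan is to extract all four claims from \THM{t2} together with \LEM{lid}, taking part (4) as the key identity, from which (1) and (3) are immediate and (2) follows by combining it with Yoshinaga's description of $\tau(q)$. The only new ingredient is an index computation for the sublattice $L' := (L \cap W) + \g h$ inside $L$, which converts the hypothesis $2\p(h,L) = \pe^m$ into a purely lattice-theoretic statement.

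To prove (4), I would first note that $L'$ is an orthogonal direct sum $(L \cap W) \oplus \g h$, so \LEM{lid}(5) gives $[\widetilde{L'}/L'] = 2q\g \cdot [(L \cap W)\widetilde{\,}/L \cap W]$. Next I would show that $L/L'$ is cyclic of length $\nu + \kappa - m$, where $\nu = \ord(q)$ and $\kappa = \ord(2)$; indeed, writing any $v \in L$ as $w + ch$ with $w \in W$, one has $cq = \p(v,h)$, so $c$ varies over $(2q)^{-1}\pe^m$, while $v \in L \cap W + \g h$ amounts to $c \in \g$. The surjectivity onto the quotient $(2q)^{-1}\pe^m / \g$ uses the very definition of $m = \ord(2\p(h,L))$, yielding $[L/L'] = \pe^{\nu + \kappa - m} = 2q\pe^{-m}$. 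Applying \LEM{lid}(3) gives $[\widetilde{L'}/L'] = [L/L']^{2}[\widetilde{L}/L]$, and \LEM{lid}(6) together with \THM{t2} give $[(L \cap W)\widetilde{\,}/L \cap W] = \pe^{2(\lambda(q)-m)}[\widetilde{M}/M]$. Cancelling $2q \cdot \pe^{-2m}$ from both sides then produces $2q[\widetilde{L}/L] = \pe^{2\lambda(q)}[\widetilde{M}/M]$, which is (4); the square-ideal assertion is immediate from $\lambda(q) \ge 0$.

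With (4) established, (1) is instant: $[\widetilde{L}/L]$ and $[\widetilde{M}/M]$ depend only on the invariants of $(V,\p)$ and $(W,\psi)$, and by \THM{t1} the latter is determined by $(V,\p)$ and $q$, hence so is $\pe^{2\lambda(q)} = 2q[\widetilde{L}/L][\widetilde{M}/M]^{-1}$. For (3), $L \cap W$ is $\g$-maximal if and only if $[M/L \cap W] = \g$ by \LEM{lid}(7), which by \THM{t2} reads $m = \lambda(q)$, i.e., $2\p(h,L) = \pe^{\lambda(q)}$. Finally, for (2), the inclusion $[M/L \cap W] \subset \g$ forces $\lambda(q) \ge m$ for every $m$ realized by some $h \in L[q, 2^{-1}\pe^m]$, hence $\lambda(q) \ge \tau(q)$; conversely, choosing $h$ with $m = \tau(q)$ and invoking \cite[Theorem 6.3]{Y} to conclude that this $L \cap W$ is $\g$-maximal, part (3) just proved yields $m = \lambda(q)$ and therefore $\lambda(q) = \tau(q)$.

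The main obstacle is the cyclic-quotient computation $[L/L'] = 2q\pe^{-m}$: it is short but depends on the subtle point that $v \mapsto \p(v,h)/q$ sends $L$ onto all of $(2q)^{-1}\pe^m$, which is precisely the content of $m = \ord(2\p(h,L))$. Once that is in place, the rest is formal manipulation with the ideal identities of \LEM{lid}, and the only external input beyond \THM{t2} is Yoshinaga's maximality criterion, used once to pin down the reverse inequality in (2).
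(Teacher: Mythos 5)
Your proposal is correct, but it reaches the key part (4) by a genuinely different and more uniform route than the paper. The paper splits into two cases: when $\p$ is isotropic it takes the specific representative $h = qe_{1}+f_{1}$ (so $m=0$), computes $L \cap W_{0} = N + \g(qe_{1}-f_{1}) + \sum_{i=2}^{r}(\g e_{i}+\g f_{i})$ explicitly, reads off $[(L\cap W_{0})\widetilde{\,}/L\cap W_{0}] = 2q[\widetilde{L}/L]$ directly, and transports the result to $W$ by an isometry; when $\p$ is anisotropic it compares the tabulated discriminant ideals \REF{dl} and \REF{dm} with the list \REF{corid} case by case. Your computation of $[L/((L\cap W)+\g h)] = 2q\pe^{-m}$, via the surjection $v \mapsto \p(v,h)/q$ of $L$ onto $(2q)^{-1}\pe^{m}$ with kernel $(L\cap W)+\g h$, combined with \LEM{lid}(3), \LEM{lid}(5), and \REF{id2}, yields $2q[\widetilde{L}/L] = \pe^{2\lambda(q)}[\widetilde{M}/M]$ in one stroke for an arbitrary $h \in L[q]$, the $m$-dependence cancelling; this avoids both the explicit Witt-decomposition computation and the table lookup, at the cost of introducing the auxiliary lattice $L' = (L\cap W)+\g h$ (note $m \le \nu+\kappa$ since $2q \in 2\p(h,L)$, so the quotient is genuinely cyclic of length $\nu+\kappa-m$). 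For (2) you also diverge: the paper simply verifies $\lambda(q) = \tau(q)$ against the explicit formulas of \cite[(6.1)]{Y}, whereas you obtain $\lambda(q) \ge \tau(q)$ from the integrality $[M/L\cap W] \subset \g$ and close the gap by importing Yoshinaga's maximality criterion (which the paper attributes to \cite[Theorem 5.3]{Y} rather than Theorem 6.3); that is logically sound but leans on an external result that assertion (3) is in part re-deriving. Parts (1) and (3) are handled essentially as in the paper.
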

We should note that assertion (3) was proven in \cite[Theorem 5.3]{Y}. 
\begin{proof}
All assertions, except (4), can easily be seen from \THM{t2} together with \cite[(6.1)]{Y}. 
Notice that we always assume $\delta \in \g^{\times} \cup \pi\g^{\times}$. 

To prove (4), 
suppose that $\p$ is isotropic. 
Then $qe_{1} + f_{1}$ belongs to $L[q,\, 2^{-1}\g]$ 
in a Witt decomposition of $(V,\, \p)$ as stated in \REF{w2}. 
Hence \THM{t2} gives $[M_{0}/L \cap W_{0}] = \mathfrak{p}^{\lambda(q)}$, 
where $W_{0}$ is the orthogonal complement of $F(qe_{1} + f_{1})$ 
and $M_{0}$ is a $\g$-maximal lattice in $W_{0}$. 
It can be seen that $L \cap W_{0} = N + \g(qe_{1} - f_{1}) + \sum_{i=2}^{r} (\g e_{i} + \g f_{i})$. 
From this we have $[(L \cap W_{0})\widetilde{\,}/L \cap W_{0}] = 2q[\widetilde{L}/L]$. 
This combined with \REF{id2} shows 
\begin{gather*}
2q[\widetilde{L}/L] = \mathfrak{p}^{2\lambda(q)}[\widetilde{M_{0}}/M_{0}]. 
\end{gather*}
Because $\p[qe_{1} + f_{1}] = \p[h]$, 
$W_{0}$ is isomorphic to $W$ under some element of $SO^{\p}(V)$ 
and so $[\widetilde{M_{0}}/M_{0}] = [\widetilde{M}/M]$. 
Thus we have the assertion when $\p$ is isotropic. 

If $\p$ is anisotropic, 
then the dimension of $V$ is $2,\ 3$, or $4$. 
As mentioned after the proof of \THM{t2}, 
$L[q] = L[q,\, 2^{-1}\pe^{\lambda(q)}]$ and $\lambda(q)$ is given by \REF{corid}. 
On the other hand, 
the ideal $2q[\widetilde{L}/L][\widetilde{M}/M]^{-1}$ can be determined by employing \REF{dl} and \REF{dm}. 
Then we find that $\pe^{2\lambda(q)} = 2q[\widetilde{L}/L][\widetilde{M}/M]^{-1}$, 
which gives the desired fact. 
\end{proof}

\section{Theorem on $[M/L \cap W]$ in the global case}

\subsection{The ideal $\mathfrak{b}(q)$}

Let $F$ be an algebraic number field and $\g$ the ring of all algebraic integers in $F$. 
For a quadratic space $(V,\, \p)$ of dimension $n\,(> 1)$ over $F$ 
and a nonzero element $q$ of $\p[V] \cap F^{\times}$, 
put $W = (Fh)^{\perp}$ and let $\psi$ be the restriction of $\p$ to $W$ 
with an arbitrary element $h$ of $V$ such that $\p[h] = q$. 
Let $[\widetilde{L}/L]$ (resp.\ $[\widetilde{M}/M]$) be 
the discriminant ideal of $(V,\, \p)$ (resp.\ $(W,\, \psi)$). 
Then we define a $\g$-ideal $\mathfrak{b}(q)$ of $F$ by 
\begin{gather}
2q[\widetilde{L}/L] = \mathfrak{b}(q)^{2}[\widetilde{M}/M]. \label{t41}
\end{gather}
This is well defined by \LEM{bq} below and 
it is determined by the isomorphism class of $(V,\, \p)$ and $q$. 
In particular, 
$\mathfrak{b}(q)$ does not depend on the choice of $h$. 
\begin{lem} \label{bq}
Let the notation be as above with $q \in \p[V] \cap F^{\times}$. 
Then $2q[\widetilde{L}/L][\widetilde{M}/M]^{-1}$ is the square of a fractional ideal in $F$. 
\end{lem}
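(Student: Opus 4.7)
The plan is to deduce the global claim from the local identity in \COR{co1}(4), which at each nonarchimedean prime $v$ reads
\begin{gather*}
2q[\widetilde{L}/L]_v[\widetilde{M}/M]_v^{-1}=\mathfrak{p}_v^{2\lambda(q)_v},
\end{gather*}
and then to take the product over all $v\in\mathbf{h}$ to conclude that the global quotient is the square of a fractional ideal. First I would reduce to the case $q\in\g$: for any $c\in F^{\times}$, replacing $h$ by $ch$ leaves $W=(Fh)^{\perp}$ and $\psi$ unchanged, hence does not affect $[\widetilde{L}/L]$ or $[\widetilde{M}/M]$, while it multiplies $2q$ by the square $c^{2}$. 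Since the property of being a square ideal is insensitive to multiplication by $c^{2}$, one may assume $q\in\g\cap\p[V]$ by choosing $c\in F^{\times}$ with sufficiently large valuation at the finitely many primes where $q$ is non-integral.

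Next I would verify that $L_{v}[q]\ne\emptyset$ for every $v\in\mathbf{h}$, so that \COR{co1}(4) applies locally. Since $q\in\p[V]\subset\p[V_{v}]$, pick $x\in V_{v}$ with $\p_{v}[x]=q$; as $q\in\g_{v}$, the cyclic lattice $\g_{v}x$ satisfies $\p_{v}[\g_{v}x]\subset\g_{v}$, so it extends to some $\g_{v}$-maximal lattice $L'_{v}\subset V_{v}$. By the transitivity of $SO^{\p}(V_{v})$ on the set of $\g_{v}$-maximal lattices (\cite[Lemma 6.9]{04}), one has $L'_{v}=L_{v}\gamma$ for some $\gamma\in SO^{\p}(V_{v})$, and then $x\gamma^{-1}\in L_{v}$ with $\p_{v}[x\gamma^{-1}]=q$, furnishing the required element of $L_{v}[q]$.

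For each $v\in\mathbf{h}$ I then choose such an $h_{v}\in L_{v}[q]$ and apply \COR{co1}(4) locally, noting that $[\widetilde{M}/M]_{v}$ is an invariant of $(W_{v},\psi_{v})$ and so is independent of the particular $h_{v}$ representing $q$. Multiplying the local identities together and using the factorizations $[\widetilde{L}/L]=\prod_{v}[\widetilde{L}/L]_{v}$ and $[\widetilde{M}/M]=\prod_{v}[\widetilde{M}/M]_{v}$, we obtain
\begin{gather*}
2q[\widetilde{L}/L][\widetilde{M}/M]^{-1}=\Bigl(\prod_{v\in\mathbf{h}}\mathfrak{p}_{v}^{\lambda(q)_{v}}\Bigr)^{2},
\end{gather*}
which is a square fractional ideal of $F$. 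The main technical point is the existence of $h_{v}\in L_{v}$ representing $q$ at every place, addressed by the extension-and-transport argument above; once this is granted, the lemma is essentially a formal consequence of \COR{co1}(4) together with the product formula for the discriminant ideals.
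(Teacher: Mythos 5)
Your proof is correct and follows essentially the same route as the paper: reduce to $q\in\g$ by replacing $h$ with $ch$ (squareness being unaffected by the factor $c^{2}$), verify $L_{v}[q]\ne\emptyset$ at each finite place, apply \COR{co1}(4) locally, and multiply over $v\in\mathbf{h}$. The only (immaterial) difference is that the paper exhibits explicit local representatives --- $h$ itself when $\p_{v}$ is anisotropic and $qe_{1}+f_{1}$ in the isotropic case --- whereas you obtain one by extending $\g_{v}x$ to a maximal lattice and transporting it via the transitivity of $SO^{\p}(V_{v})$ on maximal lattices; both are valid.
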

\begin{proof}
For $q \in \p[V] \cap F^{\times}$, 
replacing $h$ by $ch$ with a suitable $0 \ne c \in \g$, 
we may assume that $\p[h] = q \in \g$ without changing $W$. 
Let $L$ be a $\g$-maximal lattice in $V$ with respect to $\p$ 
and take a Witt decomposition of $(V,\, \p)_{v}$ as in \REF{w2} for each $v \in \mathbf{h}$. 
If $\p_{v}$ is anisotropic, 
then $h \in L_{v}[q]$ because $L_{v} = \{x \in V_{v} \mid \p_{v}[x] \in \g_{v}\}$. 
Thus \COR{co1}(4) guarantees that 
$2q[\widetilde{L_{v}}/L_{v}][\widetilde{M_{v}}/M_{v}]^{-1}$ is square. 
If $\p_{v}$ is isotropic, 
we have $qe_{1} + f_{1} \in L_{v}[q]$. 
Hence by \COR{co1}(4), 
$2q[\widetilde{L_{v}}/L_{v}][\widetilde{M_{v}}/M_{v}]^{-1}$ is square, 
since the discriminant ideal of $(F_{v}(qe_{1} + f_{1}))^{\perp}$ is the same as that of $W_{v}$. 
Consequently, 
$2q[\widetilde{L_{v}}/L_{v}][\widetilde{M_{v}}/M_{v}]^{-1}$ is square for every $v \in \mathbf{h}$, 
which leads to the required fact. 
\end{proof}
\begin{thm} \label{t3}
Let $(V,\, \p)$ be a quadratic space of dimension $n\,(> 1)$ over $F$ 
and $L$ a $\g$-maximal lattice in $V$ with respect to $\p$. 
Let $h$ be an element of $V$ such that $\p[h] \in F^{\times}$. 
Put $W = (Fh)^{\perp}$ and let $\psi$ be the restriction of $\p$ to $W$. 
Then the following assertions hold: 
\begin{gather}
\p(h,\, L)[M/L \cap W] = 2^{-1}\mathfrak{b}(q), \label{t31} \\ 
c\mathfrak{b}(q) = \mathfrak{b}(c^{2}q) = \prod_{v \in \mathbf{h}} \pe_{v}^{\lambda_{v}(c^{2}q)}, \qquad 
\mathfrak{b}(q) \subset 2\p(h,\, L) \subset c^{-1}\g. \label{t32} 
\end{gather}
Here $M$ is a $\g$-maximal lattice in $W$ with respect to $\psi$, 
$\mathfrak{b}(q)$ is the ideal defined by \REF{t41} with $q = \p[h]$, 
$c$ is a nonzero element of $\g$ such that $ch \in L$, 
$\pe_{v}$ is the prime ideal of $F$ corresponding to $v$, 
and $\lambda_{v}(c^{2}q)$ is the integer given in \REF{id1} with $c^{2}q$ in place of $q$. 
In particular, 
if $h \in L$, 
then 
\begin{gather}
\mathfrak{b}(q) = \prod_{v \in \mathbf{h}} \pe_{v}^{\lambda_{v}(q)}, \qquad 
\mathfrak{b}(q) \subset 2\p(h,\, L) \subset \g. \label{t33}
\end{gather}
\end{thm}
\begin{proof}
We first prove the case $h \in L$. 
Since \THM{t2} is applicable, 
by localization we obtain 
\begin{gather*}
[M/L \cap W] = \left\{ \prod_{v \in \mathbf{h}} \pe_{v}^{\lambda_{v}(q)} \right\}\cdot (2\p(h,\, L))^{-1}. 
\end{gather*}
Furthermore, 
\COR{co1}(4) shows that $\mathfrak{b}(q)_{v} = \pe_{v}^{\lambda_{v}(q)}$ for every $v \in \mathbf{h}$. 
Thus we have $[M/L \cap W] = \mathfrak{b}(q)(2\p(h,\, L))^{-1}$. 
In view of \LEM{lid}(6), 
this implies $\mathfrak{b}(q) \subset 2\p(h,\, L)$. 
Since $2\p(h,\, L) \subset \g$, 
we have \REF{t33}. 

As for the general case, 
we can always take $0 \ne c \in \g$ so that $ch \in L$. 
Since $ch \in L[c^{2}q]$, 
\REF{t31} is applicable to $ch$. 
Noticing $W = (Fch)^{\perp}$, 
we have then 
$[M/L \cap W] = \mathfrak{b}(c^{2}q)(2\p(ch,\, L))^{-1}$. 
The ideal $\mathfrak{b}(c^{2}q)$ is determined by 
\begin{gather*}
2c^{2}q[\widetilde{L}/L] = \mathfrak{b}(c^{2}q)^{2}[\widetilde{M}/M]. 
\end{gather*}
On the other hand, 
since $q \in \p[V] \cap F^{\times}$, 
$\mathfrak{b}(q)$ is meaningful and it satisfies \REF{t41}. 
From these it follows that $\mathfrak{b}(c^{2}q) = c\mathfrak{b}(q)$. 
Thus we obtain 
\begin{gather*}
[M/L \cap W] = c\mathfrak{b}(q)(2\p(ch,\, L))^{-1} = \mathfrak{b}(q)(2\p(h,\, L))^{-1}, 
\end{gather*}
which proves \REF{t31} in the general case. 
Moreover these results combined with \COR{co1}(4) prove \REF{t32}. 
This completes the proof. 
\end{proof}
\begin{cor} \label{co2}
Let the notation and assumption be the same as in \THM{t3}. 
Then $L \cap W$ is $\g$-maximal in $W$ if and only if 
$\p[h](\p(h,\, L))^{-2} = 2[\widetilde{M}/M][\widetilde{L}/L]^{-1}$. 
\end{cor}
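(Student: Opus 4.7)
The plan is to derive the corollary directly from \THM{t3} together with \LEM{lid}(7), since both reduce the question of maximality to an equality of ideals. First I would note that by \LEM{lid}(7) applied with $L \cap W \subset M$ and $M$ a $\g$-maximal lattice in $W$, the lattice $L \cap W$ is $\g$-maximal in $W$ if and only if $[M/L \cap W] = \g$. Then \THM{t3}, specifically the identity $[M/L \cap W] = \mathfrak{b}(q)(2\p(h,\, L))^{-1}$, immediately translates this into the condition $\mathfrak{b}(q) = 2\p(h,\, L)$ as fractional $\g$-ideals.

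Next I would square both sides and use the definition \REF{t41}, namely $2q[\widetilde{L}/L] = \mathfrak{b}(q)^{2}[\widetilde{M}/M]$, to rewrite the condition $\mathfrak{b}(q)^{2} = (2\p(h,\, L))^{2}$ as
\begin{gather*}
2q[\widetilde{L}/L] = (2\p(h,\, L))^{2}[\widetilde{M}/M],
\end{gather*}
and then divide through by $2[\widetilde{L}/L](2\p(h,\, L))^{2}$ to obtain exactly
\begin{gather*}
\p[h](2\p(h,\, L))^{-2} = [\widetilde{M}/M](2[\widetilde{L}/L])^{-1}.
\end{gather*}
This gives the ``only if'' direction. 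For the converse, starting from this last equality I would multiply out to recover $\mathfrak{b}(q)^{2} = (2\p(h,\, L))^{2}$; since fractional ideals of a Dedekind domain admit unique factorization, the equality of squares forces $\mathfrak{b}(q) = 2\p(h,\, L)$, whence $[M/L \cap W] = \g$ and $L \cap W$ is $\g$-maximal.

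There is no real obstacle here: the nontrivial content has been packaged into \THM{t3} and the well-definedness of $\mathfrak{b}(q)$ via \LEM{bq}. The only small point to watch is that we may need to arrange $h$ (or pass to $ch$ for suitable $c \in \g$) to sit inside $L$ so that the formula of \THM{t3} applies cleanly, but \THM{t3} itself already handles the general case $h \in V$, so the identity $[M/L \cap W] = \mathfrak{b}(q)(2\p(h,\, L))^{-1}$ may be used without further adjustment, and the corollary follows in a single line of algebra.
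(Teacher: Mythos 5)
Your proposal is correct and is exactly the argument the paper intends: the paper offers no separate proof, presenting the corollary as an immediate consequence of \THM{t3}, and your chain (\LEM{lid}(7) reduces maximality to $[M/L \cap W] = \g$, \REF{t31} turns this into $\mathfrak{b}(q) = 2\p(h,\, L)$, and squaring together with \REF{t41} and unique factorization of fractional ideals gives the stated equivalence) is precisely that intended deduction. Your closing remark is also right that no adjustment of $h$ is needed, since \THM{t3} already covers arbitrary $h \in V$ with $\p[h] \ne 0$.
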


\subsection{On the class number of $O^{\psi}(W)$}

Let $(V,\, \p)$ be a quadratic space of dimension $n$ over a number field $F$ 
and $L$ a $\g$-lattice in $V$. 
Let $O^{\p}(V)_{\mathbf{A}}$ and $SO^{\p}(V)_{\mathbf{A}}$ be the adelization of $O^{\p}(V)$ and $SO^{\p}(V)$, 
respectively (cf.\ \cite[{\S}9.6]{04}). 
For $\alpha \in O^{\p}(V)_{\mathbf{A}}$ 
we denote by $L\alpha$ the $\g$-lattice in $V$ whose localization 
is given by $L_{v}\alpha_{v}$ for each $v \in \mathbf{h}$. 
We put 
\begin{gather*}
D(L) = \{ \alpha \in O^{\p}(V)_{\mathbf{A}} \mid L\alpha = L \}, \quad 
C(L) = SO^{\p}(V)_{\mathbf{A}} \cap D(L), \\ 
\Gamma(L) = O^{\p}(V) \cap D(L). 
\end{gather*}
Then the map $L\alpha^{-1} \mapsto \alpha$ gives a bijection of 
$\{L\alpha\mid\alpha\in O^{\p}_{\mathbf{A}}\}/O^{\p}$ onto $O^{\p}\setminus O^{\p}_{\mathbf{A}}/D(L)$. 
We call $\{L\alpha\mid\alpha\in O^{\p}_{\mathbf{A}}\}$ the $O^{\p}$-\textit{genus} of $L$, 
$\{L\gamma\mid\gamma\in O^{\p}\}$ the $O^{\p}$-\textit{class} of $L$, 
and $\#[\{L\alpha\mid\alpha\in O^{\p}_{\mathbf{A}}\}/O^{\p}]$ 
the \textit{class number} of $O^{\p}$ relative to $D(L)$ 
or the class number of the genus of $L$ with respect to $O^{\p}$. 
These terms are also defined for $SO^{\p}$ in a similar manner. 
It is known that 
all $\g$-maximal lattices in $V$ with respect to $\p$ form a single $O^{\p}$-genus 
and that coincides with the $SO^{\p}$-genus. 
In particular when $n$ is odd, 
the class number of $O^{\p}$ relative to $D(L)$ equals 
the class number of $SO^{\p}$ relative to $C(L)$; 
see \cite[Lemma 9.23(i)\ and\ (ii)]{04}, for example. 
\\

Let $h$ be an element of $V$ such that $\p[h] \ne 0$. 
Put $W = (Fh)^{\perp}$ and let $\psi$ be the restriction of $\p$ to $W$. 
We then identify $O^{\psi}(W)$ with the subgroup 
$\{\gamma \in O^{\p}(V) \mid h\gamma = h\}$ of $O^{\p}(V)$. 
Clearly $D(L \cap W)$ contains $O^{\psi}(W)_{\mathbf{A}} \cap D(L)$. 

Now we assume that $L$ is $\g$-maximal with respect to $\p$ and $n > 2$. 
Then the formula in \cite[Theorem 11.6(iii)]{04} due to Shimura shows that 
\begin{gather}
\# \left\{O^{\psi}\setminus O^{\psi}_{\mathbf{A}}/(O^{\psi}_{\mathbf{A}} \cap D(L)) \right\} 
	= \sum_{\alpha} \# \left\{L\alpha^{-1}[q,\, \mathfrak{b}]/\Gamma(L\alpha^{-1})\right\}, 
\label{116}
\end{gather}
where $q = \p[h]$, $\mathfrak{b} = \p(h,\, L)$, 
and $\alpha$ runs over all representatives for $O^{\p}\setminus O^{\p}_{\mathbf{A}}/D(L)$. 
Therefore, 
if we find that 
$\#\{O^{\psi}\setminus O^{\psi}_{\mathbf{A}}/D(L \cap W)\}$ 
coincides with the number of the left-hand side of \REF{116}, 
then the class number of the genus of $L \cap W$ can be obtained from the right-hand side of \REF{116}. 
In fact, 
we can derive such a coincidence by means of \cite[Proposition 11.13]{04} 
for odd dimensional spaces satisfying some conditions on $h$. 

Our purpose in this subsection is to prove the following proposition: 
\begin{prop} \label{cnf}
Let $(V,\, \p)$ be a even dimensional quadratic space over a number field $F$. 
Assume that the discriminant field is $F$. 
Let $L$ be a $\g$-maximal lattice in $V$ with respect to $\p$ 
and $h$ an element of $V$ such that $\p[h] = q \ne 0$. 
Put $W = (Fh)^{\perp}$ and let $\psi$ be the restriction of $\p$ to $W$. 
Identify $O^{\psi}(W)$ with $\{\gamma \in O^{\p}(V) \mid h\gamma = h\}$. 
Assume that $q(\p(h,\, L))^{-2} = 2[\widetilde{M}/M][\widetilde{L}/L]^{-1}$ and 
$\# \{v \in \mathbf{h} \mid q(2\p(h,\, L)_{v})^{-2} = \pe_{v}\} \le 1$, 
where $M$ is a $\g$-maximal lattice in $W$ with respect to $\psi$. 
Then the following assertions hold: 
\begin{enumerate}
\item The discriminant field of $(W,\, \psi)$ is $F(\sqrt{-q})$ and 
	the characteristic algebra $Q(\psi)_{v}$ coincides with $Q(\p)_{v}$ for every $v \in \mathbf{h}$. 
\item $L \cap W$ is $\g$-maximal in $W$ with respect to $\psi$. 
\item $[D(L \cap W):O^{\psi}_{\mathbf{A}} \cap D(L)] = [C(L \cap W):SO^{\psi}_{\mathbf{A}} \cap C(L)] = 1$ 
	or $2$ according as 
	$\# \{v \in \mathbf{h} \mid q(2\p(h,\, L)_{v})^{-2} = \pe_{v}\} = 0$ or $1$. 
\item $O^{\psi}\varepsilon D(L \cap W) = O^{\psi}\varepsilon (O^{\psi}_{\mathbf{A}} \cap D(L))$ 
	for every $\varepsilon \in O^{\psi}_{\mathbf{A}}$. 
\end{enumerate}
\end{prop}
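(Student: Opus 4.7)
Claims (1) and (2) are direct consequences of earlier results. Since the discriminant field of $\p$ is $F$, we have $\delta \in F^{\times 2}$, so $\xi_{v}(\delta) = 1$ for every $v \in \mathbf{h}$. Applying \THM{t1}(1)(b) yields at once that the discriminant field of $(W, \psi)$ is $F(\sqrt{(-1)^{n-1}\delta q}) = F(\sqrt{-q})$, and that among the listed conditions for $Q(\psi_{v}) = M_{2}(F_{v})$ at nonarchimedean $v$ only the first can occur, giving $Q(\psi)_{v} = M_{2}(F_{v}) \Longleftrightarrow v \nmid D_{B} \Longleftrightarrow Q(\p)_{v} = M_{2}(F_{v})$. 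Claim (2) is immediate from \COR{co2} applied to the first hypothesis.

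For (3), I work locally at each $v \in \mathbf{h}$. By (2), $M_{v} = (L \cap W)_{v}$ is $\g_{v}$-maximal, so I can choose $h_{v}^{*} \in L_{v}$ with $\p(h, h_{v}^{*})\g_{v} = \p(h, L)_{v}$ and obtain a $\g_{v}$-module splitting $L_{v} = M_{v} \oplus \g_{v} h_{v}^{*}$. Writing $h_{v}^{*} = h_{v}^{*W} + c_{v}^{*}h$ with $h_{v}^{*W} \in W_{v}$ and $c_{v}^{*} = \p(h, h_{v}^{*})/q$, the extension of any $\alpha_{v} \in D(L \cap W)_{v}$ to $V_{v}$ by fixing $h$ preserves $L_{v}$ precisely when $h_{v}^{*W}\alpha_{v} - h_{v}^{*W} \in M_{v}$. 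Hence the local index $[D(L \cap W)_{v} : O^{\psi}_{v} \cap D(L)_{v}]$ equals the cardinality of the image $\{h_{v}^{*W}\alpha_{v} \bmod M_{v} : \alpha_{v} \in D(L \cap W)_{v}\}$ in $(M_{v} + \g_{v}h_{v}^{*W})/M_{v}$. A case analysis over the core dimensions $t = 0, 1, 2, 3, 4$ (paralleling the computations of \THM{t2}), combined with the identity $2q[\widetilde{L_{v}}/L_{v}] = (2\p(h, L)_{v})^{2}[\widetilde{M_{v}}/M_{v}]$ of \COR{co1}(4), shows that this image is a singleton unless $q(2\p(h, L)_{v})^{-2} = \pe_{v}$, in which case it has exactly two elements, the nontrivial one being realised by a reflection of $W_{v}$ through a suitable vector of $M_{v}$. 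The coincidence of the $O^{\psi}$ and $SO^{\psi}$ indices follows since $n - 1$ is odd: $-\mathrm{id}_{W}$ belongs to $D(L \cap W)_{v} \setminus SO^{\psi}_{v}$, while a reflection through a vector $k \in M_{v}$ with $\p[k] \in \g_{v}^{\times}$ lies in $(O^{\psi}_{v} \cap D(L)_{v}) \setminus SO^{\psi}_{v}$ (using $2\p(L_{v}, L_{v}) \subseteq \g_{v}$), so the quotient by $SO^{\psi}_{v}$ contributes a factor of two to both indices uniformly.

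For (4), if the index in (3) equals $1$ the assertion is trivial. Otherwise, let $v_{0}$ be the unique exceptional prime and let $\tau_{0}$ be a reflection representing the nontrivial coset in $D(L \cap W)_{v_{0}}/(O^{\psi}_{v_{0}} \cap D(L)_{v_{0}})$. Given $\varepsilon \in O^{\psi}_{\mathbf{A}}$, I seek $\gamma \in O^{\psi}(W)$ so that $\varepsilon^{-1}\gamma\varepsilon \in \tau_{0}(O^{\psi}_{\mathbf{A}} \cap D(L))$. Construct $\gamma$ as a global reflection $\tau_{k}$ through an anisotropic vector $k \in W$: by weak approximation, choose $k$ close at $v_{0}$ to the axis of $\varepsilon_{v_{0}}^{-1}\tau_{0}\varepsilon_{v_{0}}$ and lying in a prescribed unit-norm class of $M_{v}$ at every $v \ne v_{0}$. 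At each $v \ne v_{0}$ the reflection $\tau_{k}$ automatically extends into $D(L)_{v}$, while at $v_{0}$ its conjugate by $\varepsilon_{v_{0}}$ matches $\tau_{0}$ modulo $O^{\psi}_{v_{0}} \cap D(L)_{v_{0}}$; this gives the required $\gamma$.

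The principal obstacle is the case-by-case verification in (3), especially in the dyadic and ramified cases, where the structure of $M_{v}$ and the precise position of $h_{v}^{*W}$ inside the core subspace of $(W, \psi)_{v}$ must be tracked using the explicit bases constructed in Section 3. A secondary difficulty in (4) is the simultaneous local control of $k \in W$: it must be anisotropic globally, unramified-integral at all $v \ne v_{0}$, and realise the prescribed coset at $v_{0}$, and the hypothesis $\#\{v : q(2\p(h, L)_{v})^{-2} = \pe_{v}\} \le 1$ is precisely what makes this compatibility possible.
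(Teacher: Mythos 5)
Your handling of (1) and (2) is correct and essentially the paper's (the paper reads off (1) from the core dimensions $t_v\in\{0,4\}$ forced by the hypothesis, and gets (2) from \THM{t3}; citing \THM{t1} and \COR{co2} instead is equivalent). The problems are in (3) and (4). In (3), your reduction of the local index to the size of the orbit of $h_v^{*W}$ modulo $M_v$ under $D(L\cap W)_v$ is a legitimate reformulation, but the decisive step --- that this orbit is a singleton unless $q(2\p(h,\,L)_v)^{-2}=\pe_v$ and has exactly two elements in that case --- is asserted, not proved, and it is the entire content of the proposition at the exceptional prime. The paper does not obtain this by an elementary case analysis: for $\p[h]\p(2h,\,L)_v^{-2}=\g_v$ or $\pe_v^{-1}$ it invokes \cite[Proposition 11.12(iv),\ (v)]{04}, and at the one prime $u$ with value $\pe_u$ it computes $[C(L\cap W)_u:SO^{\psi}_u\cap C(L)_u]=2$ by passing to Clifford groups: $\tau(J(V)_u)=C(L)_u$ by \cite[Theorem 8.9(i)]{04}, $[C(L\cap W)_u:\tau(J(W)_u)]=2$ by \cite[Theorem 1.8(iii)]{06a}, and $J(W)_u=G^{+}(W)_u\cap J(V)_u$ via a comparison of the maximal orders $A(L)_u$ and $A(L\cap W)_u$. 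Your plan supplies no substitute for this input. (Your list of core dimensions $t=0,\dots,4$ also suggests you have not used that the hypothesis forces $t_v\in\{0,4\}$ for $\p$, i.e.\ $1$ or $3$ for $\psi$.)

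In (4) the construction itself is the gap. A global reflection $\tau_k$ produced by weak approximation is controlled only at finitely many places; at any prime $w$ outside that finite set with $\mathrm{ord}_w(\p[k])>0$, the map $x\mapsto x-2\p(x,\,k)\p[k]^{-1}k$ need not stabilize $L_w$ or $(L\cap W)_w$ (already in a hyperbolic plane, with $k=e+\p[k]f$ one gets $f\tau_k=-\p[k]^{-1}e\notin L_w$), so your $\gamma$ need not lie in $D(\Lambda\cap W)$ for $\Lambda=L\varepsilon^{-1}$ and the coset argument collapses; "lying in a prescribed unit-norm class of $M_v$ at every $v\ne v_0$" is infinitely many conditions that a single global vector cannot satisfy in general. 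No approximation is needed here: the paper takes the single canonical element $\tau(h)$, which fixes $h$ and acts as $-\mathrm{id}$ on $W$, and verifies place by place that $(\Lambda\cap W)\tau(h)=\Lambda\cap W$ everywhere while $\Lambda_u\tau(h)\ne\Lambda_u$ at the exceptional prime (using \cite[Theorem 1.3]{06a} and \cite[Lemmas 3.8(ii),\ 3.10,\ 8.4(iii)]{04}); this one element represents the nontrivial coset for every $\varepsilon$ simultaneously. You should replace your construction by this, or else prove that your approximated reflection stabilizes $\Lambda_w\cap W_w$ at all uncontrolled $w$, which as stated it does not.
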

Let us here insert some notation and facts. 
Given $(V,\, \p)$ of dimension $n$ over a number field $F$, 
let $G(V)$ and $G^{+}(V)$ be the Clifford group of $\p$ and the even Clifford group of $\p$, 
respectively. 
We define a homomorphism $\tau$ as follows: 
\begin{gather}
\tau : G(V) \longrightarrow O^{\p}(V) \quad 
	\text{via}\quad x\tau(\alpha) = \alpha^{-1}x\alpha \quad \text{for $x \in V$}. 
\label{tau}
\end{gather}
This is surjective when $n$ is even and $\tau(G(V)) = SO^{\p}(V)$ when $n$ is odd. 
Moreover $\tau$ gives an isomorphism of $G^{+}(V)/F^{\times}$ onto $SO^{\p}(V)$ (cf.\ \cite[Theorem 3.6]{04}). 

For a quadratic space $(V,\, \p)_{v}$ over a local field $F_{v}$ with $v \in \mathbf{h}$, 
we denote by $A(V)_{v}$ and $G(V)_{v}$ 
the Clifford algebra of $\p_{v}$ and the Clifford group of $\p_{v}$, respectively. 
We can also define a homomorphism of $G(V)_{v}$ into $O^{\p}(V)_{v}$ in the same manner as in \REF{tau}; 
we denote it by the same symbol $\tau$. 
For a $\g_{v}$-maximal lattice $L_{v}$ in $V_{v}$ with respect to $\p_{v}$, 
we define a subgroup $J(V)_{v}$ of $G^{+}(V)_{v}$ by 
\begin{gather}
J(V)_{v} = \{\alpha \in G^{+}(V)_{v} \mid 
	\tau(\alpha) \in C(L_{v}),\ \alpha \alpha^{*} \in \g_{v}^{\times}\}, \label{j} 
\end{gather}
where $*$ is the canonical involution of $A(V)_{v}$. 
Let $A(L_{v})$ be the subring of $A(V)_{v}$ 
generated by $\g_{v}$ and $L_{v}$. 
Also put $A^{+}(L_{v}) = A(L_{v}) \cap A^{+}(V)_{v}$. 
Then $A(L_{v})$ (resp.\ $A^{+}(L_{v})$) is an order in 
$A(V)_{v}$ (resp.\ $A^{+}(V)_{v}$) 
(cf.\ \cite[{\S}8.2]{04}). 
\\

\begin{proof}
Since the discriminant field of $\p$ is $F$, 
we have $t_{v} = 0$ or $4$ for every $v \in \mathbf{h}$ by \REF{cd1}
and $[\widetilde{L}/L] = D_{Q(\p)}^{2}$ by \THM{di}. 
The core dimension of $\psi$ at $v$ is $1$ or $3$ and 
$Q(\psi)_{v}$ is $M_{2}(F_{v})$ or a division algebra $Q(\p)_{v}$ 
according as $t_{v} = 0$ or $4$ by \THM{t1} and \REF{cd2}. 
Thus $Q(\psi)_{v} = Q(\p)_{v}$ for every $v \in \mathbf{h}$ 
and the same theorem gives the invariants of $\psi$, 
which proves (1). 

The first assumption on $h$ implies that $2\p(h,\, L) = \mathfrak{b}(q)$, 
and hence by \THM{t3}, 
$L \cap W$ is $\g$-maximal in $W$. 
This proves (2). 

Let $v \in \mathbf{h}$ and suppose $t_{v} = 4$. 
Then there exists an element $z$ of a core subspace $Z_{v}$ of $(V,\, \p)_{v}$ 
as in \REF{w2} such that $\p_{v}[z] = q$. 
Then $2\p_{v}(z,\, N_{v}) = \mathfrak{b}(q)_{v}$, 
where $N_{v} (= L_{v} \cap Z_{v})$ is the $\g_{v}$-maximal lattice in $Z_{v}$. 
Indeed, 
$cz \in N_{v}[c^{2}q] = N_{v}[c^{2}q,\, 2^{-1}\mathfrak{b}(c^{2}q)_{v}]$ 
with a suitable element $c$ of $F^{\times}$ as observed after \REF{corid}; 
this combined with $\mathfrak{b}(c^{2}q)_{v} = c\mathfrak{b}(q)_{v}$ 
gives $\p_{v}(z,\, N_{v}) = 2^{-1}\mathfrak{b}(q)_{v}$. 
Since $\widetilde{L}_{v} \ne L_{v}$ and 
$2\p_{v}(z,\, L_{v}) = \mathfrak{b}(q)_{v} = 2\p(h,\, L)_{v}$, 
\cite[Proposition 11.12(iv)\ and\ (v)]{04} are applicable to $(V,\, \p)_{v}$, $L_{v}$, and $h$; 
thus $[C(L \cap W)_{v}:SO^{\psi}_{v} \cap C(L)_{v}] = 1$ and 
there is an element $\gamma \in O^{\psi}_{v} \cap D(L)_{v}$ such that $\det(\gamma) = -1$. 
From these we have $[D(L \cap W)_{v}:O^{\psi}_{v} \cap D(L)_{v}] = 1$. 
We note that $q\p(2h,\, L)_{v}^{-2} = \g_{v}$ or $\pe_{v}^{-1}$ 
according as $\mathrm{ord}_{v}(q)$ is even or odd. 

Suppose $t_{v} = 0$. 
Then $[\widetilde{L}/L]_{v} = \g_{v}$. 
By \THM{di} we have $[\widetilde{M}/M]_{v} = 2\g_{v}$ or $2\pe_{v}$ 
and hence $\mathfrak{b}(q)_{v}^{2} = q\g_{v}$ or $q\pe_{v}^{-1}$ 
according as $\mathrm{ord}_{v}(q)$ is even or odd. 

If $\mathrm{ord}_{v}(q)$ is even, 
then $q\p(2h,\, L)_{v}^{-2} = q\mathfrak{b}(q)_{v}^{-2} = \g_{v}$, 
so that \cite[Proposition 11.12(iv) and (v)]{04} are applicable; 
hence $[C(L \cap W)_{v}:SO^{\psi}_{v} \cap C(L)_{v}] = 1$ and 
$O^{\psi}_{v} \cap D(L)_{v}$ has an element $\gamma$ such that $\det(\gamma) = -1$. 
Then $[D(L \cap W)_{v}:O^{\psi}_{v} \cap D(L)_{v}] = 1$. 

If $\mathrm{ord}_{v}(q)$ is odd, 
then $q\p(2h,\, L)_{v}^{-2} = \pe_{v}$. 
This happens for at most one prime by our assumption. 
When such a prime does not exist, 
we have $D(L \cap W) = O^{\psi}_{\mathbf{A}} \cap D(L)$ and hence assertions (3) and (4) are proved. 

Hereafter until the end of the proof, 
we assume that there exists a prime $u \in \mathbf{h}$ such that $q\p(2h,\, L)_{u}^{-2} = \pe_{u}$. 
In the same way as in the proof of \cite[Proposition 11.12(iv)]{04}, 
we can find an element $\gamma \in O^{\psi}_{u}$ such that $\det(\gamma) = -1$ and $L_{u}\gamma = L_{u}$; 
see the case where $\widetilde{L} = L$ and $t \ne 1$ in that proof. 
Thus we have 
$[D(L \cap W)_{u}:O^{\psi}_{u} \cap D(L)_{u}] = [C(L \cap W)_{u}:SO^{\psi}_{u} \cap C(L)_{u}]$. 
Now, 
$(L \cap W)_{u}$ is $\g_{u}$-maximal in $W_{u}$. 
We then consider subgroups $J(V)_{u}$ and $J(W)_{u}$ defined by 
\REF{j} with $L_{u}$ and $(L \cap W)_{u}$, respectively. 
It is noted by \cite[Lemma 3.16]{04} that $G^{+}(W)_{u}$ coincides with 
the subgroup $\{\alpha \in G^{+}(V)_{u} \mid \alpha h = h\alpha\}$. 
Applying \cite[Theorem 8.9(i)]{04} to $(V,\, \p)_{u}$ and $L_{u}$, 
we have $\tau(J(V)_{u}) = C(L)_{u}$. 
Moreover, 
noticing that $\mathrm{ord}_{u}(q)$ is odd and 
applying \cite[Theorem 1.8(iii)]{06a} to $(W,\, \psi)_{u}$ and $(L \cap W)_{u}$, 
we have $[C(L \cap W)_{u}:\tau(J(W)_{u})] = 2$. 
To observe $J(W)_{u}$, 
we further consider the order $A(L)_{u}$ (resp.\ $A(L \cap W)_{u}$) 
in $A(V)_{u}$ (resp.\ $A(W)_{u}$) defined before the proof. 
Then $A(L)_{u}$ is a maximal order containing $J(V)_{u}$ and $L_{u}$ 
by \cite[Theorem 8.6(i),\, (v),\, and\ Lemma 8.4(ii)]{04}. 
Hence $J(V)_{u} = G^{+}(V)_{u} \cap A(L)_{u}^{\times}$ by \cite[Proposition 8.8(i)]{04}. 
Similarly, 
$A(L \cap W)_{u}$ is a maximal order containing $J(W)_{u}$ and $(L \cap W)_{u}$. 
This can be proven in a similar way to the proof of \cite[Theorem 8.6(i)\ and\ (v)]{04} 
with a unique maximal order in the Clifford algebra of a $1$-dimensional core subspace of $W_{u}$. 
Thus by \cite[Proposition 8.8(i)]{04} we have 
$J(W)_{u} = G^{+}(W)_{u} \cap A(L \cap W)_{u}^{\times}$. 
Therefore $J(W)_{u}$ is contained in $G^{+}(W)_{u} \cap J(V)_{u}$. 
Since the converse inclusion is clear, 
we obtain $J(W)_{u} = G^{+}(W)_{u} \cap J(V)_{u}$. 
This combined with the above facts shows $[C(L \cap W)_{u}:SO^{\psi}_{u} \cap C(L)_{u}] = 2$. 
Summing up all indices observed above, 
we have $[D(L \cap W):O^{\psi}_{\mathbf{A}} \cap D(L)] = [C(L \cap W):SO^{\psi}_{\mathbf{A}} \cap C(L)] = 2$, 
which proves (3). 

To prove (4), 
we borrow the idea of the proof of \cite[Proposition 11.13(ii)]{04}. 
Put $G = O^{\p}(V)$, $H = O^{\psi}(W)$, and $\mathfrak{a} = \p[h](2\p(h,\, L))^{-2}$. 
Fix an arbitrary element $\varepsilon$ of $H_{\mathbf{A}}$. 
We put $\Lambda = L\varepsilon^{-1}$, 
which is a $\g$-maximal lattice in $(V,\, \p)$. 
Then $D(\Lambda \cap W) = \varepsilon D(L \cap W)\varepsilon^{-1}$ and 
$H_{\mathbf{A}} \cap D(\Lambda) = \varepsilon (H_{\mathbf{A}} \cap D(L))\varepsilon^{-1}$. 
Since $[D(L \cap W):H_{\mathbf{A}} \cap D(L)] = 2$ by (3), 
we have $[D(\Lambda \cap W):H_{\mathbf{A}} \cap D(\Lambda)] = 2$. 
We are going to show that 
$\tau(h)$ is an element of $H$ such that 
$(\Lambda \cap W)\tau(h) = \Lambda \cap W$ and $\Lambda \tau(h) \ne \Lambda$. 
Once that is proven, 
we obtain 
\begin{gather*}
\varepsilon D(L \cap W)\varepsilon^{-1} = 
	\varepsilon (H_{\mathbf{A}} \cap D(L))\varepsilon^{-1} \sqcup 
	\tau(h) \varepsilon (H_{\mathbf{A}} \cap D(L))\varepsilon^{-1}. 
\end{gather*}
Thus $H\varepsilon D(L \cap W)\varepsilon^{-1} = H\varepsilon (H_{\mathbf{A}} \cap D(L))\varepsilon^{-1}$ 
because $\tau(h) \in H$, 
which leads to the desired conclusion. 

First, 
$\tau(h) \in G$ and $h\tau(h) = h$, 
and hence $\tau(h)$ belongs to $H$. 
Since $\varepsilon_{v} \in H_{v}$ for every $v \in \mathbf{h}$, 
we have $\p(h,\, \Lambda) = \p(h,\, L)$. 
Taking $c \in F_{\mathbf{A}}^{\times}$ so that $2c\p(h,\, L) = \g$, 
we see that $2\p(c_{v}h,\, \Lambda_{v}) = \g_{v}$ and 
$\p[c_{v}h]\g_{v} = \p[c_{v}h]\p(2c_{v}h,\, \Lambda_{v})^{-2} = \mathfrak{a}_{v}$ 
for every $v \in \mathbf{h}$. 
In the proof of (3) we have seen that $\mathfrak{a}_{v} = \g_{v},\, \pe_{v}^{-1}$, or $\pe_{v}$; 
the last case happens only at $v = u$ denoted there. 
Suppose $\mathfrak{a}_{v} = \g_{v}$. 
Then $\p[c_{v}h]\g_{v} = \g_{v}$ and $2\p(c_{v}h,\, \Lambda_{v}) = \g_{v}$. 
Hence $c_{v}h$ belongs to $\Lambda_{v}$ and it is invertible in the order $A(\Lambda)_{v}$. 
Since this order contains $\Lambda_{v}$ by definition, 
$A(\Lambda)_{v} \cap V_{v} = \Lambda_{v}$ by \cite[Lemma 8.4(iii)]{04}. 
Thus we have $\Lambda_{v}\tau(h) = h^{-1}A(\Lambda)_{v}h \cap V_{v} = \Lambda_{v}$. 
Suppose $\mathfrak{a}_{v} \ne \g_{v}$. 
We need a Witt decomposition as in \REF{w2}: 
\begin{gather*}
V_{v} = Z_{v} \oplus \sum_{i=1}^{r}(F_{v}e_{i} + F_{v}f_{i}),\quad 
\Lambda_{v} = N_{v} + \sum_{i=1}^{r}(\g_{v}e_{i} + \g_{v}f_{i}), 
\end{gather*}
where $N_{v}$ is a unique $\g_{v}$-maximal lattice in a core subspace $Z_{v}$. 
If $t_{v} = 4$, 
then $\mathfrak{a}_{v} = \pe_{v}^{-1}$. 
We can find an element $z$ of $Z_{v}$ such that 
$\p[z] = \p[c_{v}h] \in \pi_{v}^{-1}\g_{v}^{\times}$ and 
$2\p(z,\, N_{v}) = \g_{v} = 2\p(c_{v}h,\, \Lambda_{v})$. 
This is because the core space is isomorphic to $(Q(\p)_{v},\, \beta_{v})$ 
and $2\p(z,\, N_{v}) = \mathrm{Tr}_{Q(\p)_{v}/F_{v}}(z\mathfrak{o}_{v}) = \g_{v}$, 
where $\mathfrak{o}_{v}$ is a unique maximal order in the division algebra $Q(\p)_{v}$. 
By virtue of \cite[Theorem 1.3]{06a}, 
there exists $\alpha \in G_{v}$ such that 
$c_{v}h = z\alpha$ and $\Lambda_{v}\alpha = \Lambda_{v}$. 
Moreover by \cite[Lemma 3.8(ii)]{04}, 
$\tau(z\alpha) = \alpha^{-1}\tau(z)\alpha$. 
Thus noticing $\tau(z) \in O^{\p}(Z_{v}) = D(N_{v})$, 
we have $\Lambda_{v}\tau(h) = \Lambda_{v}\alpha^{-1}\tau(z)\alpha = \Lambda_{v}$. 
If $t_{v} = 0$, 
then $\mathfrak{a}_{v} = \pe_{v}$ and $v = u$. 
Put $p = \p[c_{u}h] \in \pe_{u}$ and $k = pe_{1} + f_{1}$. 
It can be seen that $\p[k] = \p[c_{u}h]$ and 
$\p(k,\, \Lambda_{u}) = 2^{-1}\g_{u} = \p(c_{u}h,\, \Lambda_{u})$. 
Again by \cite[Theorem 1.3]{06a}, 
$c_{u}h\gamma = k$ and $\Lambda_{u}\gamma = \Lambda_{u}$ with some $\gamma \in G_{u}$. 
Moreover $\tau(h\gamma) = \gamma^{-1}\tau(h)\gamma$ by \cite[Lemma 3.8(ii)]{04}. 
Then $\gamma$ gives an isomorphism of $W_{u}$ onto $W_{u}^{\prime} = (F_{u}k)^{\perp}$ 
such that $(\Lambda_{u} \cap W_{u})\gamma = \Lambda_{u} \cap W_{u}^{\prime}$ 
and $\gamma^{-1}\tau(h)\gamma = \tau(k)$. 
We see that 
$\Lambda_{u} \cap W_{u}^{\prime} = 
	N_{u} + \g_{u}(pe_{1}-f_{1}) + \sum_{i=2}^{r}(\g_{u}e_{i} + \g_{u}f_{i})$. 
Then employing \cite[Lemma 3.10]{04}, 
we can find that 
\begin{gather*}
(\Lambda_{u} \cap W_{u}^{\prime})\tau(k) 
	= \{-x - a(pe_{1} - f_{1}) \mid x \in \Lambda_{u} \cap U,\ a \in \g_{u}\} 
	= \Lambda_{u} \cap W_{u}^{\prime}, \\ 
\Lambda_{u}\tau(k) 
	= \{-x + pae_{1} + p^{-1}bf_{1} \mid x \in \Lambda_{u} \cap U,\ a,\, b \in \g_{u}\} 
	\ne \Lambda_{u}, 
\end{gather*}
where $U = Z_{u} + \sum_{i=2}^{r}(F_{u}e_{i} + F_{u}f_{i})$. 
Thus we obtain 
$(\Lambda_{u} \cap W_{u})\tau(h) = \Lambda_{u} \cap W_{u}$ 
and $\Lambda_{u}\tau(h) \ne \Lambda_{u}$. 
Consequently, 
$\tau(h) \in H$, 
$(\Lambda \cap W)\tau(h) = \Lambda \cap W$, 
and $\Lambda \tau(h) \ne \Lambda$, 
which are the desired facts. 
This completes the proof. 
\end{proof}

\section{Applications}

\subsection{Eight-dimensional case}

Let $(V,\, \p)$ be a quadratic space over $\mathbf{Q}$ with invariants 
\begin{gather}
\{8,\ \mathbf{Q},\ M_{2}(\mathbf{Q}),\ 8\}. \label{8}
\end{gather}
Clearly the core dimension is $0$ at any $v \in \mathbf{h}$. 
Let $q$ be a squarefree positive integer and $h$ an element of $V$ such that $\p[h] = q$. 
Put $W = (Fh)^{\perp}$ and $\psi = \p|_{W}$. 
Then the invarinats of $(W,\, \psi)$ are given by 
\begin{gather}
\{7,\ \mathbf{Q}(\sqrt{-q}),\ M_{2}(\mathbf{Q}),\ 7\}. \label{7}
\end{gather}

Let $L$ (resp. $M$) be a $\mathbf{Z}$-maximal lattice in $(V,\, \p)$ (resp. $(W,\, \psi)$); 
the discriminant ideals are given by 
$[\widetilde{L}/L] = \mathbf{Z}$ and $[\widetilde{M}/M] = 2q\mathbf{Z}$. 
Hence $\mathfrak{b}(q)=\mathbf{Z}$, 
and so $L[q] = L[q,\, 2^{-1}\mathbf{Z}]$ by \REF{t33}. 
\begin{prop}
The notation being as above, 
suppose $h \in L[q]$. 
Then the following assertions hold: 
\begin{enumerate}
\item $(V,\, \p) \cong (B,\, \beta) \oplus (B,\, \beta) \cong (\q^{1}_{8},\, 1_{8})$, 
	where $B$ is a definite quaternion algebra over $\q$ ramified exactly at $2$ with norm form $\beta$. 
\item $L[q] = L[q,\, 2^{-1}\mathbf{Z}] \ne \emptyset$ for every squarefree positive integer $q$. 
\item $L \cap W$ is $\mathbf{Z}$-maximal in $W$ with respect to $\psi$. 
\item The class number of the genus of $\mathbf{Z}$-maximal lattices in $(W,\, \psi)$ is given by 
	$\#\{L[q]/\Gamma(L)\}$ for every prime number $q$. 
\end{enumerate}
\end{prop}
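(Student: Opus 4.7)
The plan addresses the four assertions in order, building on \THM{t2}, \THM{t3}, \COR{co2}, and \PROP{cnf}.

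For (1), I would invoke the uniqueness of the isomorphism class from invariants \cite[Theorem 4.2]{cq} and verify that both $(\q^{1}_{8},\,1_{8})$ and $(B,\,\beta)\oplus(B,\,\beta)$ carry the invariants \REF{8}. Dimension, positive-definiteness and $\det=1$ are immediate, whence the discriminant field is $\q$ and $s_{\infty}=8$. For the characteristic algebra, \REF{chre} gives $M_{2}(\mathbf{R})$ at $\infty$, and \REF{cd1} forces the nonarchimedean core dimension into $\{0,\,4\}$, with $0$ selected by triviality of the Hasse invariant: every symbol $(1,1)_{v}=1$ in the $1_{8}$ case, while for $(B,\,\beta)\oplus(B,\,\beta)$ the Hasse invariant of $(B,\,\beta)$ enters squared in the orthogonal sum and so cancels.

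For (2), the identification in (1) combined with Lagrange's four-square theorem yields $L[q]\ne\emptyset$ for every positive integer $q$. Next, \THM{di} gives $[\widetilde{L}/L]=\g$ (trivial discriminant field, split $Q(\p)$) and $[\widetilde{M}/M]=2q\g$ (odd-dimensional case with $\delta'=-q$ squarefree so $\mathfrak{a}=q\g$, and $Q(\psi)$ split by \THM{t1}(1)(ii) since $\xi_{v}(1)=1$ everywhere). Substituting into \REF{t41} gives $\mathfrak{b}(q)=\g$, and then \REF{t33} sandwiches $\g\subset 2\p(h,\,L)\subset\g$, forcing $2\p(h,\,L)=\g$ and proving $L[q]=L[q,\,2^{-1}\g]$. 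For (3), I apply \COR{co2}: both $\p[h](2\p(h,\,L))^{-2}$ and $[\widetilde{M}/M](2[\widetilde{L}/L])^{-1}$ equal $q\g$, so $L\cap W$ is $\g$-maximal.

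For (4), fix a prime $q$ and apply \PROP{cnf}. Its first hypothesis is (3) just established, and $\#\{v\in\mathbf{h}\mid q\g_{v}=\pe_{v}\}=1$, so both hypotheses hold. Conclusion (4) of \PROP{cnf} then yields
\begin{gather*}
\#\bigl(O^{\psi}\setminus O^{\psi}_{\mathbf{A}}/D(L\cap W)\bigr)
 =\#\bigl(O^{\psi}\setminus O^{\psi}_{\mathbf{A}}/(O^{\psi}_{\mathbf{A}}\cap D(L))\bigr),
\end{gather*}
and Shimura's formula \REF{116} evaluates the right-hand side as $\sum_{\alpha}\#(L\alpha^{-1}[q,\,2^{-1}\g]/\Gamma(L\alpha^{-1}))$ over representatives for $O^{\p}\setminus O^{\p}_{\mathbf{A}}/D(L)$. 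The decisive final input is the classical fact that the genus of $\g$-maximal lattices in the positive definite $8$-dimensional form with invariants \REF{8} -- the $E_{8}$ genus -- has class number one; the sum thus collapses to the single term $\#(L[q,\,2^{-1}\g]/\Gamma(L))=\#(L[q]/\Gamma(L))$ by (2), giving the asserted formula. The principal obstacle is this $E_{8}$ class-number-one fact, which must be imported from classical lattice theory; a secondary care point is the local Hasse-invariant computation at $p=2$ underlying (1).
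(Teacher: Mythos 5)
Your proposal is correct and its skeleton coincides with the paper's proof: $[\widetilde{L}/L]=\mathbf{Z}$ and $[\widetilde{M}/M]=2q\mathbf{Z}$ from \THM{di}, hence $\mathfrak{b}(q)=\mathbf{Z}$ and $2\p(h,\,L)=\mathbf{Z}$ by \REF{t41} and \REF{t33}; maximality of $L\cap W$ (the paper reads it off from \REF{t31}, you from the equivalent \COR{co2}); and \PROP{cnf} together with \REF{116} and class number one for the $E_{8}$ genus for (4). You differ only in two inputs. For (1), the paper computes $A(1_{8})\cong A(1_{4})\otimes_{\q}A(1_{4})\cong M_{16}(\q)$ via \cite[Lemma 2.8(i)]{cq} to get $Q(1_{8})=M_{2}(\q)$, while you match Hasse invariants; that works, but the Hasse invariant of an orthogonal sum carries a cross term $(\det\p_{1},\,\det\p_{2})_{v}$ which is trivial here only because $\det\beta$ is a square, so that should be said. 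For (2), the paper quotes \cite[Proposition 1.8]{99b} for $L_{0}[q]\ne\emptyset$, while you use Lagrange's four-square theorem; Lagrange gives representability by $\mathbf{Z}^{1}_{8}$, which is a proper sublattice of any maximal lattice containing it, so you still need the containment $\mathbf{Z}^{1}_{8}\subset L_{0}$ with $L_{0}$ maximal and the single-class fact (which you invoke only in (4)) to transfer nonemptiness to an arbitrary maximal $L$ --- make that bridge explicit in (2). Neither point is a genuine gap, only a place to tighten the write-up.
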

\begin{proof}
It is well known that the norm form $\beta$ of $B$ is represented by 
the identity element $1_{4}$ of $\q_{4}^{4}$ with respect to a suitable basis over $\q$. 
This means that $(\q^{1}_{4},\, 1_{4}) \cong (B,\, \beta)$. 
From the argument in \cite[{\S}7.4]{04} we see that $A(1_{4}) = M_{2}(B)$. 
Viewing $1_{8}$ as $1_{4} \oplus 1_{4}$, 
by \cite[Lemma 2.8(i)]{cq} we have $A(1_{8}) \cong A(1_{4}) \otimes_{\mathbf{Q}} A(1_{4}) \cong M_{16}(\q)$. 
Hence $Q(1_{8}) = M_{2}(\q) = Q(\p)$. 
Since the other invariants of $1_{8}$ are also the same as those of $\p$, 
we have $(\q^{1}_{8},\, 1_{8}) \cong (V,\, \p)$, 
which proves (1). 
Now, 
it is known that the genus of $\mathbf{Z}$-maximal lattices 
in $(\q^{1}_{8},\, 1_{8})$ consists of a single $SO^{1_{8}}$-class. 
Since $1_{8}$ is positive definite and $0 < q \in \mathbf{Z}$, 
in view of \cite[Proposition 1.8]{99b}, 
we have $L[q] \ne \emptyset$. 
This together with $L[q] = L[q,\, 2^{-1}\mathbf{Z}]$ shows (2). 
Because $2\p(h,\, L) = \mathfrak{b}(q) = \mathbf{Z}$, 
by \REF{t31} we have (3). 
Let $q$ be a prime number. 
Then \PROP{cnf} is applicable to the present space. 
Thus (4) follows from \REF{116}. 
\end{proof}

\subsection{Six-dimensional case}

Let $(V,\, \p)$ be a quadratic space over $\mathbf{Q}$ with invariants 
\begin{gather}
\{6,\ \mathbf{Q}(\sqrt{-1}),\ B_{2,\, \infty},\ 6\}. \label{inv}
\end{gather}
Here we denote by $B_{p,\, \infty}$ a definite quaternion algebra over $\mathbf{Q}$ 
ramified exactly at a prime number $p$. 
Put $B = B_{2,\, \infty}$, $K = \mathbf{Q}(\sqrt{-1})$, and $\xi_{p} = \xi_{p}(-1)$. 
Then $D_{K/\mathbf{Q}} = 4\mathbf{Z}$, 
and $\xi_{p} = 1$ if and only if $p \equiv 1 \pmod{4}$. 
By \REF{cd1} the core dimension $t_{p}$ is given by 
\begin{gather*} 
t_{p} = 
\begin{cases}
0 & \text{if $\xi_{p} = 1$}, \\
2 & \text{otherwise}. 
\end{cases}
\end{gather*}
Hence if $\xi_{p} = 1$, 
then $(V,\, \p)_{p} = (V,\, \p) \otimes_{\mathbf{Q}} \mathbf{Q}_{p}$ 
has a split Witt decomposition. 
If $\xi_{p} \ne 1$, 
a core subspace of $(V,\, \p)_{p}$ can be identified with 
$(K_{p},\, \kappa_{p})$ or $(K_{p},\, -\kappa_{p})$ 
according as $\xi_{p} = -1$ or $\xi_{p} = 0$. 
This is because the characteristic algebra at $p$ is $M_{2}({\mathbf{Q}_{p}})$ if $p \equiv 3 \pmod{4}$, 
and it is a division algebra $B_{2} = \{K_{2},\, -1\}$ over $\mathbf{Q}_{2}$ if $p = 2$. 
Here $\kappa_{p}$ is the norm form of $K_{p} = \mathbf{Q}_{p}(\sqrt{-1})$ 
and note that $-1 \not\in \kappa_{2}[K_{2}^{\times}]$. 

Let $q$ be a prime number and $h$ an element of $V$ such that $\p[h] = q$. 
Put $W = (Fh)^{\perp}$ and $\psi = \p|_{W}$. 
Then the invarinats of $(W,\, \psi)$ are given by 
\begin{gather}
\{5,\ \mathbf{Q}(\sqrt{q}),\ B_{q,\, \infty},\ 5\} \quad \text{if $q \equiv 3 \pmod{4}$}, \label{a1} \\
\{5,\ \mathbf{Q}(\sqrt{q}),\ B_{2,\, \infty},\ 5\} \quad \text{otherwise}. \label{a2}
\end{gather}
To see this, 
let us determine $Q(\psi)_{p}$ for every prime number $p$ by using \THM{t1}. 
If $\xi_{p} = 1$, 
then $p \equiv 1 \pmod{4}$ and so $p$ is not ramified in $B$; 
hence $Q(\psi)_{p} = M_{2}(\mathbf{Q}_{p})$. 
If $\xi_{p} = -1$, 
then $p \equiv 3 \pmod{4}$, 
which is unramified in $B$. 
Since $K_{p}$ is unramified over $\mathbf{Q}_{p}$, 
$q \in \kappa_{p}[K_{p}^{\times}]$ if $p \ne q$. 
Then $Q(\psi)_{p} = M_{2}(\mathbf{Q}_{p})$ if $p \ne q$, 
and $Q(\psi)_{p}$ is a division algebra if $p = q$. 
If $\xi_{p} = 0$, 
then $p = 2$, 
which is ramified in $B$. 
Since $1 + 4\mathbf{Z}_{2} \subset \kappa_{2}[K_{2}^{\times}]$, 
it can be seen that $q \not\in \kappa_{2}[K_{2}^{\times}]$ if $q \equiv 3 \pmod{4}$, 
and $q \in \kappa_{2}[K_{2}^{\times}]$ if $q \equiv 1 \pmod{4}$ or $q = 2$. 
Hence we have $Q(\psi)_{2} = M_{2}(\mathbf{Q}_{2})$ if $q \equiv 3 \pmod{4}$, 
and $Q(\psi)_{2} = B_{2}$ otherwise $q$. 
All these $Q(\psi)_{p}$ together with $Q(\psi)_{\infty} = \mathbf{H}$ 
determine $Q(\psi)$ given in \REF{a1} and \REF{a2}. 
\\

Let $L$ (resp.\ $M$) be a $\mathbf{Z}$-maximal lattice in $V$ (resp.\ $W$) 
with respect to $\p$ (resp.\ $\psi$). 
The discriminant ideals of $\p$ and $\psi$ are given by 
\begin{gather}
[\widetilde{L}/L] = 4\mathbf{Z}, \qquad [\widetilde{M}/M] = 
\begin{cases}
8q\mathbf{Z} & \text{if $q \equiv 1 \pmod{4}$}, \\ 
2q\mathbf{Z} & \text{if $q \equiv 3 \pmod{4}$ or $q = 2$} 
\end{cases} \label{discr}
\end{gather}
by applying \THM{di} to \REF{inv}, \REF{a1}, and \REF{a2}. 
Then $\mathfrak{b}(q)$ of \REF{t41} is determined as follows: 
If $q \equiv 1 \pmod{4}$, 
then $2q[\widetilde{L}/L]$ $[\widetilde{M}/M]^{-1} = \mathbf{Z}$; 
if $q \equiv 3 \pmod{4}$ or $q = 2$, 
then $2q[\widetilde{L}/L][\widetilde{M}/M]^{-1} = 4\mathbf{Z}$ by \REF{discr}. 
Thus 
\begin{gather}
\mathfrak{b}(q) = 
\begin{cases}
\mathbf{Z} & \text{if $q \equiv 1 \pmod{4}$}, \\
2\mathbf{Z} & \text{if $q \equiv 3 \pmod{4}$ or $q = 2$} 
\end{cases} \label{b}
\end{gather}
for a prime number $q$. 
Moreover, 
by \REF{t33}, 
\begin{gather*}
L[q] = L[q,\, \mathbf{Z}] \cup L[q,\, 2^{-1}\mathbf{Z}] 
\end{gather*}
and $L[q,\, \mathbf{Z}] = \emptyset$ if $q \equiv 1 \pmod{4}$. 
With $\mathfrak{b}(q)$ in \REF{b} the ideal $[M/L \cap W]$ can be computed: 
If $h \in L[q,\, 2^{-1}\mathbf{Z}]$, 
then $[M/L \cap W]=\mathfrak{b}(q)$. 
If $h \in L[q,\, \mathbf{Z}]$, 
then $q \not\equiv 1 \pmod{4}$ and 
$[M/L \cap W] = 2^{-1}\mathfrak{b}(q) = \mathbf{Z}$ by \REF{t31}. 
\\

As for a more precise explanation of the result, 
we take $(V,\, \p) = (\mathbf{Q}^{1}_{6},\, 1_{6})$, 
which is one of the spaces with invariants \REF{inv}. 
In fact, 
we have only to see the characteristic algebra of $1_{6}$. 
Since $Q(1_{4}) = B_{2,\, \infty}$, 
by employing \cite[Lemma 2.8(i)]{cq}, 
\begin{gather*}
A(1_{6})_{p} \cong A(1_{4})_{p} \otimes_{\mathbf{Q}_{p}} A(1_{2})_{p} \cong 
\begin{cases}
M_{8}(\mathbf{Q}_{p}) & \text{if $p \ne 2$}, \\
M_{4}(B_{2}) & \text{if $p = 2$}. 
\end{cases}
\end{gather*}
This shows $Q(1_{6}) = B_{2,\, \infty}$ as required. 
It is known that 
the genus of all maximal lattices in $(\mathbf{Q}^{1}_{6},\, 1_{6})$ consists of a single class 
with respect to both $SO^{1_{6}}$ and $O^{1_{6}}$; 
see \cite[{\S}12.12]{04}, 
for example. 
Now, 
with a maximal lattice $L$ as above 
\cite[Theorem 7.5]{Y} shows that for a squarefree positive integer $q$, 
\begin{gather*}
\begin{cases}
L[q,\, 2^{-1}\mathbf{Z}] \ne \emptyset & \text{for any $q$}, \\
L[q,\, \mathbf{Z}] \ne \emptyset & \text{if $q$ is even or $q \equiv 3 \pmod{4}$}. 
\end{cases}
\end{gather*}
Combining these facts with our results, 
we have 
\begin{prop} \label{6}
Let $(V,\, \p)$ be a quadratic space over $\mathbf{Q}$ with invariants 
$\{6,\, \mathbf{Q}(\sqrt{-1}),\, B_{2,\, \infty},\, 6\}$. 
Let $L$ be a $\mathbf{Z}$-maximal lattice in $V$ with respect to $\p$. 
Then $L[q,\, 2^{-1}\mathbf{Z}] \ne \emptyset$ for every squarefree positive integer $q$ and 
$L[q,\, \mathbf{Z}] \ne \emptyset$ for every squarefree positive integer $q$ 
such that $q$ is even or $q \equiv 3 \pmod{4}$. 
Moreover, 
suppose $q$ is a prime number. 
Pick $h\in L[q]$ and put $W = (Fh)^{\perp}$ and $\psi = \p|_{W}$. 
Then the following assertions hold: 
\begin{enumerate}
\item The invariants of $(W,\, \psi)$ are given by \REF{a1} or \REF{a2}, 
	and the discriminant ideal of $\psi$ is given by \REF{discr}. 
\item If $q \equiv 1 \pmod{4}$, 
	then $L \cap W$ is $\mathbf{Z}$-maximal in $W$ with respect to $\psi$. 
\item If $q \equiv 3 \pmod{4}$ or $q = 2$, 
	then 
	\begin{gather*}
	[M/L \cap W] = 
	\begin{cases}
	2\mathbf{Z} & \text{if $h \in L[q,\, 2^{-1}\mathbf{Z}]$}, \\ 
	\mathbf{Z} & \text{if $h \in L[q,\, \mathbf{Z}]$}, 
	\end{cases} 
	\end{gather*}
	where $M$ is a $\mathbf{Z}$-maximal lattice in $W$ with respect to $\psi$. 
	In particular, 
	$L \cap W$ is $\mathbf{Z}$-maximal in $W$ with respect to $\psi$ if $h \in L[q,\, \mathbf{Z}]$. 
\end{enumerate}
\end{prop}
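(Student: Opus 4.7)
The plan is to assemble the proposition from the material that has already been developed in the paragraphs preceding the statement, since the key computations (invariants, discriminant ideals, and the ideal $\mathfrak{b}(q)$) have already been carried out. First, for the non-emptiness assertions on $L[q,\, 2^{-1}\mathbf{Z}]$ and $L[q,\, \mathbf{Z}]$, I would invoke \cite[Theorem 7.5]{Y}, which handles representations by $\mathbf{Z}$-maximal lattices in the present positive definite $6$-dimensional space. A necessary preliminary is to verify that $(V,\, \p) \cong (\mathbf{Q}^{1}_{6},\, 1_{6})$: since both are positive definite $6$-dimensional, it suffices to check that their invariants agree. The dimension, discriminant field $\mathbf{Q}(\sqrt{-1})$, and the signature are immediate, while $Q(1_{6}) = B_{2,\,\infty}$ follows from the localization $A(1_{6})_{p} \cong A(1_{4})_{p} \otimes_{\mathbf{Q}_{p}} A(1_{2})_{p}$ together with $Q(1_{4}) = B_{2,\,\infty}$ as in the discussion. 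Combined with the single-class property of the genus of $\mathbf{Z}$-maximal lattices in $(\mathbf{Q}^{1}_{6},\, 1_{6})$, non-emptiness for some maximal lattice transfers to all of them.

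Next, for assertion (1), I would apply \THM{t1} directly to the invariants $(6,\, \mathbf{Q}(\sqrt{-1}),\, B_{2,\,\infty},\, 6)$ of $(V,\, \p)$. The case split on $q \pmod 4$ is already spelled out in the discussion: $Q(\psi)_{p}$ must be computed locally at every nonarchimedean $p$ according to whether $\xi_{p}(-1)$ is $1$, $-1$, or $0$, and combined with $Q(\psi)_{\infty} = \mathbf{H}$, which yields \REF{a1} or \REF{a2}. The discriminant ideal \REF{discr} then drops out of \THM{di} applied to these invariants.

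For assertions (2) and (3), the workhorse is \THM{t3}, specifically formula \REF{t31}:
\begin{gather*}
[M/L \cap W] = \mathfrak{b}(q)(2\p(h,\, L))^{-1}.
\end{gather*}
The value of $\mathfrak{b}(q)$ is given in \REF{b} from the ratio $2q[\widetilde{L}/L][\widetilde{M}/M]^{-1}$, which uses \REF{discr}. For $h \in L[q]$, by definition $2\p(h,\, L) \in \{\mathbf{Z},\, 2^{-1}\mathbf{Z}\}$, so the divisibility $\mathfrak{b}(q) \subset 2\p(h,\, L)$ from \THM{t3} together with \REF{b} forces the decomposition $L[q] = L[q,\, \mathbf{Z}] \cup L[q,\, 2^{-1}\mathbf{Z}]$, and rules out $L[q,\, \mathbf{Z}]$ when $q \equiv 1 \pmod 4$ (since then $\mathfrak{b}(q) = \mathbf{Z}$). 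Substitution into \REF{t31} yields $[M/L \cap W] = \mathbf{Z}$ when $q \equiv 1 \pmod 4$, giving (2) via \LEM{lid}(7), and gives the two cases in (3).

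No step looks genuinely hard; essentially all the work lies in the preceding paragraphs of \S4.2. The only subtle point I would be careful about is the application of \cite[Theorem 7.5]{Y}: that result is stated for a fixed maximal lattice, so I must use the single-class fact about the genus of maximal lattices in $(\mathbf{Q}^{1}_{6},\, 1_{6})$ (cited from \cite[\S12.12]{04}) to transfer non-emptiness from the lattice underlying Yoshinaga's formulation to the given $L$.
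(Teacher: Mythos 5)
Your proposal is correct and follows essentially the same route as the paper, which proves Proposition \ref{6} precisely by the computations in the paragraphs preceding it: the identification $(V,\,\p)\cong(\mathbf{Q}^{1}_{6},\,1_{6})$ via $Q(1_{6})=B_{2,\,\infty}$ and the single-class property together with \cite[Theorem 7.5]{Y} for the non-emptiness claims, \THM{t1} and \THM{di} for assertion (1), and \REF{t31} with $\mathfrak{b}(q)$ from \REF{b} for assertions (2) and (3). The only quibble is your phrase ``by definition $2\p(h,\,L)\in\{\mathbf{Z},\,2^{-1}\mathbf{Z}\}$'': the two possible values of $2\p(h,\,L)$ are $\mathbf{Z}$ and $2\mathbf{Z}$, and this is not a definition but a consequence of the sandwich $\mathfrak{b}(q)\subset 2\p(h,\,L)\subset\mathbf{Z}$ from \THM{t3}, which you do invoke correctly in the next clause.
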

\begin{prop}
In the same setting as in \PROP{6} assume that $q$ is a prime number such that 
$q \equiv 3 \pmod{4}$ and $h \in L[q,\, \mathbf{Z}]$. 
Then $[D(L \cap W):O^{\psi}_{\mathbf{A}} \cap D(L)] = [C(L \cap W):SO^{\psi}_{\mathbf{A}} \cap C(L)] = 2$ 
and $O^{\psi}\varepsilon D(L \cap W) = O^{\psi}\varepsilon (O^{\psi}_{\mathbf{A}} \cap D(L))$ 
for every $\varepsilon \in O^{\psi}_{\mathbf{A}}$. 
Consequently, 
the class number of the genus of maximal lattices in $(W,\, \psi)$ is given by 
$\#\{L[q,\, \mathbf{Z}]/\Gamma(L)\}$ for every prime number $q$ such that $q \equiv 3 \pmod{4}$. 
Here we identify $O^{\psi}(W)$ with $\{\gamma \in O^{\p}(V) \mid h\gamma = h\}$. 
\end{prop}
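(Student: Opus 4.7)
The plan is to adapt the proof of \PROP{cnf} to the current setting, in which the discriminant field of $\p$ is $K = \q(\sqrt{-1}) \ne \q$ and local core dimensions are $0$ or $2$ instead of $0$ or $4$. The starting point is \PROP{6}(3), which gives that $L \cap W$ is $\mathbf{Z}$-maximal in $W$. Since $\p(h,\, L) = \mathbf{Z}$, the local quantity $\mathfrak{a}_{p} = \p[h]\p(2h,\, L)_{p}^{-2}$ equals $\g_{p}$ for $p \nmid 2q$, while $\mathfrak{a}_{q} = \pe_{q}$ and $\mathfrak{a}_{2} = \pe_{2}^{-2}$. The task is then to compute the local index $[D(L \cap W)_{p} : O^{\psi}_{p} \cap D(L)_{p}]$ at every finite $p$ and assemble them.

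At $p \equiv 1 \pmod{4}$ one has $t_{p} = 0$ and $\mathfrak{a}_{p} = \g_{p}$, so \cite[Proposition 11.12]{04} applies directly to give local index $1$. At $p \equiv 3 \pmod{4}$ with $p \ne q$ one has $t_{p} = 2$ with core $(K_{p},\, \kappa_{p})$; since $q$ is a unit at $p$ it lies in the unramified norm group $\kappa_{p}[K_{p}^{\times}]$, hence in $\p[Z_{p}]$, and one gets local index $1$. At $p = 2$ the core is $(K_{2},\, -\kappa_{2})$; a brief computation with the norm group of the ramified extension $K_{2}/\q_{2}$ shows that $q \equiv 3 \pmod{4}$ forces $-q \in \kappa_{2}[K_{2}^{\times}]$, so $q \in \p[Z_{2}]$ and again the local index is $1$.

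Hence the entire factor of $2$ concentrates at $p = q$. Here $t_{q} = 2$ with core $(K_{q},\, \kappa_{q})$, and $\ord_{q}(q) = 1$ forces $q \notin \kappa_{q}[K_{q}^{\times}]$, so $W_{q}$ acquires an anisotropic summand of dimension $3$. Replaying the Clifford-theoretic portion of the proof of \PROP{cnf}, introduce $J(V)_{q}$ and $J(W)_{q}$ as in \REF{j}; the combination of $\tau(J(V)_{q}) = C(L)_{q}$ from \cite[Theorem 8.9(i)]{04} with $[C(L \cap W)_{q} : \tau(J(W)_{q})] = 2$ from \cite[Theorem 1.8(iii)]{06a}, together with the identification $J(W)_{q} = G^{+}(W)_{q} \cap J(V)_{q}$ via maximality of the Clifford orders $A(L)_{q}$ and $A(L \cap W)_{q}$, yields $[C(L \cap W)_{q} : SO^{\psi}_{q} \cap C(L)_{q}] = 2$. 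The corresponding orthogonal index follows by exhibiting an element of $O^{\psi}_{q} \cap D(L)_{q}$ of determinant $-1$, as in \PROP{cnf}. Multiplying over all primes gives the global indices in the statement.

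For the double-coset identity, the $\tau(h)$-trick of \PROP{cnf}(4) carries over: fix $\varepsilon \in O^{\psi}_{\mathbf{A}}$, set $\Lambda = L\varepsilon^{-1}$, and verify prime by prime that $\tau(h) \in O^{\psi}$ satisfies $(\Lambda \cap W)\tau(h) = \Lambda \cap W$ at every prime and $\Lambda \tau(h) \ne \Lambda$ precisely at $p = q$; at $p \ne q$ this follows by conjugating $c_{p}h$ into a standard core or isotropic element and applying \cite[Theorem 1.3]{06a} together with \cite[Lemma 3.8]{04}, while at $p = q$ it follows from \cite[Lemma 3.10]{04} applied to $k = pe_{1} + f_{1}$. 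The class number formula then follows from (4) combined with \REF{116}, applied with $\mathfrak{b} = \p(h,\, L) = \mathbf{Z}$ and using that the genus of $\mathbf{Z}$-maximal lattices in $(V,\, \p)$ is a single class, so the sum over representatives collapses to a single term. The main obstacle is the local analysis at $p = q$: it is the unique prime where $W$ becomes anisotropic of dimension $3$, which forces the detour through Clifford orders and \cite[Theorem 1.8]{06a}, and it is also the unique prime where $\tau(h)$ fails to preserve $\Lambda$, which is the non-trivial input for assertion (4).
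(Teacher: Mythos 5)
Your plan is correct and follows essentially the same route as the paper's proof: local index $1$ at all $p\ne q$ via \cite[Proposition 11.12]{04}, the Clifford-order argument ($\tau(J(V)_{q}) = C(L)_{q}$, $[C(L\cap W)_{q}:\tau(J(W)_{q})]=2$ from \cite[Theorem 1.8(iii)]{06a}, and $J(W)_{q}=G^{+}(W)_{q}\cap J(V)_{q}$) concentrating the factor $2$ at $p=q$, the $\tau(h)$-trick with $k=qe_{1}+f_{1}$ and \cite[Lemma 3.10]{04} for the double-coset identity, and the collapse of \REF{116} to a single term because the genus of maximal lattices in $(V,\,\p)$ is one class. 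The only cosmetic divergences are that the paper works with the \emph{even} Clifford orders $A^{+}(L)_{q}$, $A^{+}(L\cap W)_{q}$ (since the $3$-dimensional core of $W_{q}$ has division even Clifford algebra $Q(\psi)_{q}$ with a unique maximal order), produces the determinant $-1$ element from \cite[Lemma 6.8]{04} applied to the anisotropic line $Z_{q}\cap W_{q}$, and at $p\ne q$ handles $\Lambda_{p}\tau(h)=\Lambda_{p}$ directly from the invertibility of $h$ in $A(\Lambda)_{p}$ (as $\p[h]=q$ is a $p$-adic unit) rather than by conjugating into standard position.
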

\begin{proof} 
We put $H = SO^{\psi}(W)$ and $H^{\bullet} = O^{\psi}(W)$. 
By \PROP{6} our assumptions imply that $L \cap W$ is $\mathbf{Z}$-maximal in $W$ with respect to $\psi$. 
Let $p$ be a prime number. 
If $p \ne q$, 
then \cite[Proposition 11.12(iv)\ and\ (v)]{04} are applicable to $(V,\, \p)_{p},\ L_{p},$ and $h$; 
hence we have $[D(L \cap W):H^{\bullet} \cap D(L)]_{p} = [C(L \cap W):H \cap C(L)]_{p} = 1$. 
If $p = q$, 
let $(Z_{q},\, \p_{q})$ be a core subspace of $V_{q}$ 
and $N_{q} (= L_{q} \cap Z_{q})$ the maximal lattice in $Z_{q}$ as in \REF{w2}. 
Then this space is isomorphic to $(\q_{q}(\sqrt{-1}),\, \kappa_{q})$ 
because the characteristic algebra $Q(\p_{q})$ is $\{\q_{q}(\sqrt{-1}),\, 1\} = M_{2}(\q_{q})$. 
Hence, 
by \cite[Theorem 8.6(vi)\ and\ Proposition 8.8(ii)]{04}, 
$A^{+}(L)_{q}$ is a maximal order in $A^{+}(V)_{q}$ such that 
$G^{+}(V)_{q} \cap A^{+}(L)_{q}^{\times} = J(V)_{q}$, 
where $J(V)_{q}$ is defined by \REF{j} with $L_{q}$. 
Moreover $\tau(J(V)_{q}) = C(L)_{q}$ by \cite[Theorem 8.9(i)]{04}. 
Similarly for $(W,\, \psi)_{q}$, 
$A^{+}(L \cap W)_{q}$ is a maximal order in $A^{+}(W)_{q}$ such that 
$G^{+}(W)_{q} \cap A^{+}(L \cap W)_{q}^{\times} = J(W)_{q}$, 
where $J(W)_{q}$ is defined by \REF{j} with the maximal lattice $(L \cap W)_{q}$. 
This can be verified in the same way as in \cite[Theorem 8.6(ii)\ and\ (vi)]{04} 
with a unique maximal order in the even Clifford algebra $Q(\psi)_{q}$ of a $3$-dimensional core space of $W_{q}$. 
Moreover $[C(L \cap W)_{q}:\tau(J(W)_{q})] = 2$ by \cite[Theorem 1.8(iii)]{06a}. 
Thus we find that $J(W)_{q} = G^{+}(W)_{q} \cap J(V)_{q}$, 
whence $[C(L \cap W)_{q}:H_{q} \cap C(L)_{q}] = 2$. 
Now, 
$Z_{q} \cap W_{q} \ne \emptyset$. 
Since this is anisotropic, 
$N_{q} \cap W_{q}$ is maximal in $Z_{q} \cap W_{q}$. 
By \cite[Lemma 6.8]{04} there exists $\gamma_{0} \in D(Z_{q} \cap W_{q})$ such that $\det(\gamma_{0}) = -1$. 
Because $\p_{q}$ is nondegenerate on $Z_{q} \cap W_{q}$, 
so is on $(Z_{q} \cap W_{q})^{\perp}$. 
We then define $\gamma \in O^{\p}(V)_{q}$ by 
$x\gamma = x\gamma_{0}$ if $x \in Z_{q} \cap W_{q}$ and $x\gamma = x$ if $x \in (Z_{q} \cap W_{q})^{\perp}$. 
Observe $\det(\gamma) = -1$ and $L_{q}\gamma = L_{q}$. 
Thus, 
in view of $h \in (Z_{q} \cap W_{q})^{\perp}$, 
we have $\gamma \in H^{\bullet}_{q} \cap D(L)_{q}$ such that $\det(\gamma) = -1$. 
This combined with the above index concludes $[D(L \cap W)_{q}:H^{\bullet}_{q} \cap D(L)_{q}] = 2$, 
which proves the first assertion. 

To prove the second assertion, 
let $\varepsilon$ be an arbitrary element of $H_{\mathbf{A}}^{\bullet}$. 
Put $\Lambda = L\varepsilon^{-1}$, 
which is a $\mathbf{Z}$-maximal lattice in $(V,\, \p)$. 
Then we have $\p(h,\, \Lambda) = \p(h,\, L) = \mathbf{Z}$. 
Let us show that 
$\tau(h) \in H^{\bullet}$ satisfies 
$(\Lambda \cap W)\tau(h) = \Lambda \cap W$ and $\Lambda \tau(h) \ne \Lambda$. 
For any prime $p \ne q$, 
we have $\p[h] \in \mathbf{Z}_{p}^{\times}$ and $2\p(h,\, \Lambda)_{p} \subset \mathbf{Z}_{p}$. 
Hence $h$ belongs to $\Lambda_{p}$ and it is invertible in the order $A(\Lambda)_{p}$. 
Since $A(\Lambda)_{p} \cap V_{p} = \Lambda_{p}$ by \cite[Lemma 8.4(iii)]{04}, 
we have $\Lambda_{p}\tau(h) = h^{-1}A(\Lambda_{p})h \cap V_{p} = \Lambda_{p}$. 
As for the prime $q$, 
we take a Witt decomposition as in \REF{w2}; 
\begin{gather*}
V_{q} = Z_{q} \oplus \sum_{i=1}^{2}(\q_{q}e_{i} + \q_{q}f_{i}),\quad 
\Lambda_{q} = N_{q} + \sum_{i=1}^{2}(\mathbf{Z}_{q}e_{i} + \mathbf{Z}_{q}f_{i}), 
\end{gather*}
where $N_{q}$ is a unique $\mathbf{Z}_{q}$-maximal lattice in a core subspace $Z_{q}$. 
Put $k = qe_{1} + f_{1}$; 
then $\p[k] = \p[h] = q$ and $\p(k,\, \Lambda_{q}) = \p(h,\, \Lambda_{q})$. 
Thus by \cite[Theorem 1.3]{06a}, 
$h\alpha = k$ and $\Lambda_{q}\alpha = \Lambda_{q}$ with some $\alpha \in O^{\p}_{q}$. 
Moreover $\tau(h\alpha) = \alpha^{-1}\tau(h)\alpha$ by \cite[Lemma 3.8(ii)]{04}. 
Then $\alpha$ gives an isomorphism of $W_{q}$ onto $W_{q}^{\prime} = (\q_{q}k)^{\perp}$ 
such that $(\Lambda_{q} \cap W_{q})\alpha = \Lambda_{q} \cap W_{q}^{\prime}$ 
and $\alpha^{-1}\tau(h)\alpha = \tau(k)$. 
Putting $U = Z_{q} + (\q_{q}e_{2} + \q_{q}f_{2})$, 
we have $\Lambda_{q} \cap W_{q}^{\prime} = (\Lambda_{q} \cap U) + \mathbf{Z}_{q}(qe_{1}-f_{1})$. 
It can be seen by \cite[Lemma 3.10]{04} that 
\begin{gather*}
(\Lambda_{q} \cap W_{q}^{\prime})\tau(k) 
	= \{-x - a(qe_{1} - f_{1}) \mid x \in \Lambda_{q} \cap U,\ a \in \mathbf{Z}_{q}\} 
	= \Lambda_{q} \cap W_{q}^{\prime}, \\ 
\Lambda_{q}\tau(k) 
	= \{-x + qae_{1} + q^{-1}bf_{1} \mid x \in \Lambda_{q} \cap U,\ a,\, b \in \mathbf{Z}_{q}\} 
	\ne \Lambda_{q}. 
\end{gather*}
Thus we obtain 
$(\Lambda_{q} \cap W_{q})\tau(h) = \Lambda_{q} \cap W_{q}$ and $\Lambda_{q}\tau(h) \ne \Lambda_{q}$. 
This gives the desired fact. 
Now, 
$[D(L \cap W):H^{\bullet}_{\mathbf{A}} \cap D(L)] = 2$ as seen above. 
Observing $D(\Lambda \cap W) = \varepsilon D(L \cap W)\varepsilon^{-1}$ and 
$H^{\bullet}_{\mathbf{A}} \cap D(\Lambda) = \varepsilon (H^{\bullet}_{\mathbf{A}} \cap D(L))\varepsilon^{-1}$, 
we have then 
$D(\Lambda \cap W) = (H^{\bullet}_{\mathbf{A}} \cap D(\Lambda)) \sqcup 
	\tau(h)(H^{\bullet}_{\mathbf{A}} \cap D(\Lambda))$. 
Since $\tau(h) \in H^{\bullet}$, 
we obtain 
$H^{\bullet}\varepsilon D(L \cap W) = 
	H^{\bullet}\varepsilon (H^{\bullet}_{\mathbf{A}} \cap D(L))$, 
and hence 
$\#\{H^{\bullet}\setminus H^{\bullet}_{\mathbf{A}}/D(L \cap W)\} = 
	\#\{H^{\bullet}\setminus H^{\bullet}_{\mathbf{A}}/(H^{\bullet}_{\mathbf{A}} \cap D(L))\}$. 
Therefore we have the last assertion by the class number formula in \REF{116}. 
\end{proof}

%

College of Science and Engineering 

Ritsumeikan University 

Kusatsu,\ Shiga 525-8577 

Japan 

murata31@pl.ritsumei.ac.jp 


\begin{thebibliography}{99}
	\bibitem[1]{99b}
		G.~Shimura, 
		{\it The number of representations of an integer by a quadratic form}, 
		Duke Math. J. \textbf{100} (1999), 59-92. 
	\bibitem[2]{04}
		G.~Shimura, 
		{\it Arithmetic and analytic theories of quadratic forms and Clifford groups}, 
		Mathematical Surveys and Monographs, \textbf{109}, Amer. Math. Soc., Providence, RI, (2004). 
	\bibitem[3]{cq}
		G.~Shimura, 
		{\it Classification, construction, and similitudes of quadratic forms}, 
		Amer. J. Math. \textbf{128} (2006), 1521-1552. 
	\bibitem[4]{06a}
		G.~Shimura, 
		{\it Integer-valued quadratic forms and quadratic Diophantine equations}, 
		Documenta Math. \textbf{11} (2006), 333-367.
	\bibitem[5]{Y}
		T.~Yoshinaga, 
		{\it On the solutions of quadratic Diophantine equations}, 
		Documenta Math. \textbf{15} (2010), 347-385.
\end{thebibliography}
\end{document}